\title{Rigidity and Toeplitz systems}
\author{Henk Bruin}
\address{Faculty of Mathematics, University of Vienna, Oskar Morgensternplatz 1, Vienna, Austria}
\email{henk.bruin@univie.ac.at}
\author{Olena Karpel}
\address{AGH University of Krakow, Faculty of Applied Mathematics, al. Adama Mickiewicza~30, 30-059 Krak\'ow, Poland \&
B. Verkin Institute for Low Temperature Physics and Engineering,
47~Nauky Ave., Kharkiv, 61103, Ukraine}
\email{okarpel@agh.edu.pl}
\author{Piotr Oprocha}
\address{AGH University of Krakow, Faculty of Applied Mathematics, al. Adama Mickiewicza~30, 30-059 Krak\'ow, Poland \&
Centre of Excellence
IT4Innovations - Institute for Research and Applications of Fuzzy Modeling,
University of Ostrava, 30. Dubna 22, 701 03 Ostrava 1, Czech Republic
}
\email{oprocha@agh.edu.pl}
\author{Silvia Radinger}
\address{Faculty of Mathematics, University of Vienna, Oskar Morgensternplatz 1, Vienna, Austria}
\email{silvia.radinger@univie.ac.at }
\date{\today}
\theoremstyle{plain}
\newtheorem{thm}{\textsf{\textbf{Theorem}}}[section]
\newtheorem*{thm*}{\textsf{\textbf{Theorem}}}
\newtheorem{lem}[thm]{\textsf{\textbf{Lemma}}}
\newtheorem{cor}[thm]{\textsf{\textbf{Corollary}}}
\newtheorem{prop}[thm]{\emph{Proposition}}
\newtheorem*{prop*}{\emph{Proposition}}
\newtheorem*{claim*}{\emph{Claim}}
\theoremstyle{definition}
\newtheorem{dfn}[thm]{\textbf{\textsf{Definition}}}
\newtheorem{rem}[thm]{{\textsf{Remark}}}
\newtheorem{ex}[thm]{{\textsf{Example}}}
\newtheorem{conj}[thm]{\emph{Conjecture}}
\newcommand{\cc}[1]{\mathcal #1}
\newcommand{\N}{\mathbb{N}}
\newcommand{\R}{\mathbb{R}}
\newcommand{\Z}{\mathbb{Z}}
\DeclareMathOperator{\dist}{dist}
\DeclareMathOperator{\id}{id}
\DeclareMathOperator{\orb}{orb}
\DeclareMathOperator{\len}{length}
\newcommand{\invlim}{\varprojlim}
\newcommand{\eps}{\varepsilon}
\def\ov{\overline}
\def\tl{\widetilde}
\newcommand{\wh}{\widehat}
\def\bx{\boldsymbol{x}}
\newcommand{\blue}{\color{blue}}
\newcommand{\red}{\color{red}}
\begin{document}
\begin{abstract}
		The aim of this paper is to study measure-theoretical rigidity and partial rigidity for classes of Cantor dynamical systems including Toeplitz systems and enumeration systems. 
   We use Bratteli diagrams to control invariant measures that are produced in our constructions. This leads to systems with desired properties. Among other things, we show that there exist Toeplitz systems with zero entropy which are not partially measure 
   -theoretically rigid with respect to any of its invariant measures. We investigate enumeration systems defined by a linear recursion, prove that all such systems are partially rigid and present an example of an enumeration system which is not measure-theoretically rigid. We construct a minimal $\mathcal{S}$-adic Toeplitz subshift which has countably infinitely many ergodic invariant probability measures which are rigid for the same rigidity sequence.
   \\[4mm]
   {\bf Mathematics Subject Classification (2010):} {\it Primary: 37B20, Secondary: 37B10, 37A05}
\\
	{\bf Keywords:} {\it rigidity, partial rigidity, Bratteli diagrams, Vershik map, Toeplitz shift} 
\end{abstract}

\maketitle
\tableofcontents

\section{Introduction}
This paper is devoted to the study of measure-theoretical rigidity for various classes of Cantor dynamical systems which include Toeplitz systems and enumeration systems.
Rigidity is a form of recurrence in dynamical systems: a finite measure-preserving dynamical system $(X,T,\mu)$ is called rigid if there is an increasing sequence $t_n \in \mathbb{N}$ such that $\mu(T^{-t_n}(A) \Delta A) \to 0$ for all measurable sets $A$ as $n \to \infty$. 
 The notion of rigidity was introduced for measure preserving transformations in \cite{FurstenbergWeiss1978}. Later, a weaker property of partial rigidity was introduced in \cite{Friedman1989}. 
 The notion of rigidity in topological dynamics
 (i.e., topological rigidity, also called uniform rigidity) was first considered in \cite{GM}. An overview of the results devoted to the rigidity sequences can be found in \cite{KanigowskiLemanczyk2023}. Recent results on rigidity include \cite{Bergelson-delJunco-Lemanczyk-Rosenblatt-2014}, \cite{FayadKanigowski2015}, \cite{Danilenko2016}, \cite{DonosoShao2017}, \cite{DonosoMaassRadic2023}. 


Toeplitz systems are minimal symbolic almost $1$-$1$ extensions of odometers which demonstrate rich variety of topological and measure-theoretical properties (see \cite{Downar} for a survey). 
Regular Toeplitz systems are measure-theoretically isomorphic to odometers, and thus are measure-theoretically rigid with respect to their unique invariant measure. Irregular Toeplitz systems satisfying the so called Same Aperiodic Readouts (SAR) property have a measure-theoretical representation as a skew product of their maximal equicontinuous factor (which is an odometer) and a subshift (see \cite{Downar}). Using this representation, we construct a Toeplitz subshift of zero entropy which is not partially measure-theoretically rigid with respect to any of its invariant measures.

Another tool that we apply in this paper is
Bratteli diagrams. These are graded graphs which are extensively used in Cantor dynamics for constructing models of homeomorphisms of Cantor spaces. Bratteli-Vershik systems proved to be very useful for classifying Cantor dynamical systems and constructing various examples of homeomorphisms (see e.g. \cite{Durand2010}, \cite{BezuglyiKarpel2016}, \cite{DownarowiczKarpel2018}, \cite{BezuglyiKarpel_2020}, \cite{DurandPerrin2022}). In particular, Bratteli diagrams are extremely useful in describing the simplex of invariant probability measures. In this paper, we show rigidity for the measures obtained as extensions from subdiagrams which are odometers. We apply the results to Toeplitz systems and some systems with countably infinitely many ergodic invariant probability measures. Enumeration systems are generalizations of odometers that are introduced in \cite{BaratDownarowiczIwanikLiardet2000, BaratDownarowiczLiardet2002}, see also \cite[Section 5.3]{Bruin_book}.
We show that a class of enumeration systems, coming from kneading theory, are  measure-theoretically partially rigid, and that there exists an enumeration system in this class (in fact, a substitution system) that is not measure-theoretically rigid. 


The outline of the paper is as follows. In Section~\ref{Sect:Prelim}, we give preliminaries concerning rigidity, Bratteli diagrams and Toeplitz sequences. In Section~\ref{Sect:Toeplitz}, we construct a Toeplitz subshift of zero entropy which is not partially measure-theoretically rigid with respect to any of its invariant measures. In Section~\ref{Sect:ext_from_odom}, we show that a class of Bratteli-Vershik systems with the invariant probability measure which is obtained as an extension from an odometer with growing number of edges on each level is measure-theoretically rigid. Section~\ref{sec:enum_systems} gives examples of non-rigid partially rigid systems. We present a class of non-simple stationary Bratteli diagrams such that the corresponding Bratteli-Vershik map 
is not measure-theoretically rigid but is 
partially measure-theoretically rigid
with respect to the full ergodic invariant measure. We also 
show that a large class of enumeration systems are partially measure-theoretically rigid, and that there exists an enumeration system which is not measure-theoretically rigid. In Section~\ref{Sect:OP}, we present a list of open problems that would be interesting to investigate further.
Our main results are contained in Theorems~\ref{thm:skew}, \ref{cor:toeplitz_non_part_rigidity}, \ref{Thm:rigid_mu_from_odom}, \ref{Thm:rigid_mu_from_odom_withoutorder}, \ref{thm:enum_syst_notrigid}.

\medskip
\section{Preliminaries}\label{Sect:Prelim}
By a Cantor dynamical system we mean a pair $(X,T)$, where $X$ is a Cantor set and $T$ is a continuous surjective map.  We always consider the Borel $\sigma$-algebra $\mathcal{B}$ on $X$ and the Borel ergodic $T$-invariant probability measures. 
Throughout the paper, $\mathbb N, \Z, \R,$  $\mathbb{N}_0 = \N \cup \{0\}$ are the standard notations for the sets of numbers, and $|\cdot |$ denotes the cardinality of a set. 

\subsection{Rigidity in dynamical systems}

\begin{dfn}
    A dynamical system $(X,T, \mu)$ is \emph{measure-theoretically rigid} if there exists a sequence $t_n \to \infty$ such that
    $$
        \mu(T^{-t_n}(A) \Delta A) \to 0 \text{ for all measurable sets $A$.}
    $$
    The sequence $(t_n)_n$ is called a \textit{rigidity sequence}.
\end{dfn}

\begin{rem}\label{rem:minus}
 This convergence is not uniform. For example, if $(X,\mu,T)$ is the dyadic odometer, then $(2^n)_{n \geq 1}$ is a rigidity sequence, but if $A_n = \{ x \in X : x_{n+1}=1 \}$, then $\mu(A_n \triangle T^{2^n}(A_n)) \equiv 1$.
\end{rem}

\begin{rem}
This definition of measure-theoretical rigidity is equivalent to $\mu(T^{-t_n}A \cap A) \to \mu(A)$ as $n\to\infty$ for all measurable sets $A$.
\begin{align*}
    \mu(T^{-t_n}A \cap A) &= \mu(T^{-t_n}A \cup A) - \mu(T^{-t_n}A \Delta A) \\
            &= \mu(T^{-t_n}A) + \mu(A) - \mu(T^{-t_n}A \cap A) - \mu(T^{-t_n}A \Delta A)\\
\intertext{and thus}
   \lim_{n\to \infty} 2\mu(T^{-t_n}A \cap A) &= \lim_{n\to \infty} 2\mu(A) - \mu(T^{-t_n}A \Delta A) = 2\mu(A)        
\end{align*}
\end{rem}



The following equivalent definition of rigidity can be found for instance in \cite{Silva_book}:
\begin{lem}
    A dynamical system $(X,T, \mu)$ is measure-theoretically rigid if and only if for all measurable sets $A$ and $\varepsilon > 0$ there exists an integer $n=n(\varepsilon) > 0$ such that 
    $$
    \mu(T^{-n}(A) \Delta A) < \varepsilon.
    $$
\end{lem}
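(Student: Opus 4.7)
The forward direction is immediate: if $(t_n)$ is a rigidity sequence, then for any measurable $A$ and $\varepsilon>0$ the convergence $\mu(T^{-t_n}(A)\Delta A)\to 0$ yields some $N$ for which $n:=t_N$ satisfies the required inequality.

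For the converse my plan is to construct a rigidity sequence by diagonal extraction, but first I need to upgrade the hypothesis to a simultaneous statement on finite families:

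\medskip
\textit{(Upgrade)} For any measurable sets $A_1,\ldots,A_k$ and any $\varepsilon>0$, there exists $n\geq 1$ with $\mu(T^{-n}(A_i)\Delta A_i)<\varepsilon$ for every $i\leq k$.
\medskip

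This Upgrade is the main obstacle, since the hypothesis only delivers a return time per set rather than a common one. To prove it I would pass to the Koopman operator $Uf=f\circ T$ on $L^2(X,\mu)$, which is an isometry because $T$ preserves $\mu$, and rewrite the condition as $\|U^n\mathbbm{1}_A-\mathbbm{1}_A\|_2^2=\mu(T^{-n}(A)\Delta A)$. Applying the hypothesis to each atom of the finite partition generated by $A_1,\ldots,A_k$ (each atom is a measurable set, so the hypothesis applies) and exploiting compactness of the unit ball of operators acting on the finite-dimensional subspace $V=\mathrm{span}\{\mathbbm{1}_{A_1},\ldots,\mathbbm{1}_{A_k}\}$ in the weak operator topology, together with $T$-invariance of $\mu$, one can extract a subsequence of $(U^n)_n$ whose compression to $V$ converges to $I_V$; a small tail of that subsequence yields the required simultaneous $n$.

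Given the Upgrade, the remaining steps are routine. Separability of $L^2(X,\mu)$ yields a countable family $\{B_k\}_{k\in\mathbb N}$ dense in the measure algebra under the pseudo-metric $d(A,B)=\mu(A\Delta B)$. Applying the Upgrade to each prefix $\{B_1,\ldots,B_k\}$ with tolerance $1/k$, and choosing the resulting integers strictly increasing, produces a sequence $t_k\nearrow\infty$ with $\mu(T^{-t_k}(B_i)\Delta B_i)<1/k$ for all $i\leq k$. For an arbitrary measurable set $A$ and $\varepsilon>0$, approximate $A$ by some $B_j$ with $\mu(A\Delta B_j)<\varepsilon/3$, and combine the triangle inequality for $\Delta$ with the $T$-invariance of $\mu$:
$$
\mu(T^{-t_k}(A)\Delta A)\;\leq\; 2\mu(A\Delta B_j)+\mu(T^{-t_k}(B_j)\Delta B_j)\;<\;\frac{2\varepsilon}{3}+\frac{1}{k}\;\leq\;\varepsilon
$$
for all $k\geq\max(j,3/\varepsilon)$, verifying that $(t_k)$ is a rigidity sequence.
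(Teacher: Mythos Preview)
The paper does not supply a proof of this lemma; it merely cites Silva's book. So there is no in-paper argument to compare against, and your proposal has to stand on its own.

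Your forward direction is fine, and the diagonal extraction in the final paragraph is standard and correct once the Upgrade is in hand. The gap is precisely in the Upgrade. Compactness of the unit ball of $\mathcal{L}(V)$ only guarantees that \emph{some} subsequence of the compressions $P_V U^n|_V$ converges; it tells you nothing about the limit being $I_V$. The hypothesis gives, for each atom $B_i$ separately, an integer $n_i$ with $\|U^{n_i}\mathbbm{1}_{B_i}-\mathbbm{1}_{B_i}\|$ small, but these $n_i$ are a priori unrelated, and nothing you have written explains how to merge them into a single $n$. Invoking ``$T$-invariance of $\mu$'' merely restates that $U$ is an isometry; it does not pin down the compressed limit. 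Crucially, the compressions $P_V U^n|_V$ do \emph{not} form a semigroup (because $U^n$ moves $V$ out of itself), so the recurrence intuition for isometries on compact spaces is unavailable at the level of $V$.

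This Upgrade is the entire content of the lemma; once you assume it, the rest is bookkeeping. A correct argument needs a genuine idea: for instance, work in the full $L^2$ and exploit the semigroup structure of $\{U^n:n\ge 0\}$ in the strong operator topology (using that the return sets $N(A,\varepsilon)=\{n\ge 1:\mu(T^{-n}A\,\Delta\,A)<\varepsilon\}$ are closed under differences, since $\mu(T^{-(n-m)}A\,\Delta\,A)=\mu(T^{-n}A\,\Delta\,T^{-m}A)$), or encode the finite partition into a single measurable set via a Rokhlin tower so that one application of the hypothesis to that set controls all atoms simultaneously. As written, the converse direction is not proved.
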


\begin{rem}
Let $(X,T)$ be a surjective dynamical system and $\mu$ be a $T$-invariant non-atomic probability measure on $(X,T)$. 
Let $F$ be the set of all non-invertible points of $T$. Assume that $\mu(F) = 0$. Then as in the case of $T$ being a homeomorphism we can show rigidity by looking at the image of any measurable set $A$ under $T$ i.e.,
$$
    \mu(T^n(A) \Delta A)<\varepsilon.
$$

Indeed, we know that 
$$
T^{-n}(T^n A) = \begin{cases}
    A \cup \bigcup_{j\leq n} T^{-n+j}(x) & \text{ if $T^j(x) \in T^nA$ for some $j\leq n$ and $x\in F$} \\
    A & \text{ otherwise.}
\end{cases}
$$
Suppose $\mu(T^nA \Delta A) < \varepsilon$, then 
\begin{align*}
\varepsilon &> \mu(T^{-n}(T^n A \Delta A)) = \mu(T^{-n}(T^n A) \Delta T^{-n}A) 
= \mu((A\cup \tl{F}) \Delta T^{-n}A)\\
&= \mu(A\setminus T^{-n} A) + \mu(\tl F\setminus T^{-n} A)+ \mu(T^{-n} A \setminus A) - \mu(\tl F\cap T^{-n} A \setminus A) \\
&= \mu(A \Delta T^{-n} A)
\end{align*}
for $\tl F$, a set of measure zero. Thus for such systems $\mu(T^{-n}A\Delta A) = \mu(T^{n}A\Delta A) < \varepsilon$ for all $n$ and $A$, therefore $(X,T)$ is rigid.
We will see non-invertible systems in Section~\ref{sec:enum_systems}, where we consider enumeration systems that have one point with multiple preimages. All measure-theoretically rigid systems are invertible a.e. (see Remark~\ref{rem:invertibility_part_rigid}).
\end{rem}

 \begin{lem}
  If $(X,d)$ is a compact metric space and $(X,\mu,T)$ is rigid, then for all $\eps > 0$ there is an $s \in \N$ such that
  $\mu( \{ x \in X : d(T^s(x),x) < \eps \} ) > 1-\eps$.
\end{lem}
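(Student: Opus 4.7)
The plan is to exploit compactness to reduce rigidity (which is a statement about symmetric differences of single measurable sets) to a uniform statement about a finite measurable partition with small pieces.

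First, using compactness of $(X,d)$, I would cover $X$ by finitely many open balls of radius $\eps/2$, and then refine this cover into a finite measurable partition $\mathcal{P} = \{A_1,\dots,A_N\}$ with $\diam(A_i) < \eps$ for every $i$ (for instance, set $A_i = B_i \setminus \bigcup_{j<i} B_j$). The key geometric observation is: if $x$ and $T^s(x)$ lie in the same atom $A_i$, then $d(T^s(x), x) < \eps$.

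Next I would apply rigidity with rigidity sequence $(t_n)$. Since $\mathcal{P}$ has only $N$ atoms, I can choose $n$ large enough so that $\mu(A_i \triangle T^{-t_n}(A_i)) < \eps/N$ for every $i = 1,\dots, N$ simultaneously, and set $s = t_n$. The set of points $x$ with $d(T^s(x),x) < \eps$ then contains $G := \bigcup_{i=1}^N (A_i \cap T^{-s}(A_i))$, since the atoms are pairwise disjoint. Its complement satisfies
\[
X \setminus G \;\subseteq\; \bigcup_{i=1}^N \bigl(A_i \setminus T^{-s}(A_i)\bigr) \;\subseteq\; \bigcup_{i=1}^N \bigl(A_i \triangle T^{-s}(A_i)\bigr),
\]
and therefore $\mu(X \setminus G) < N \cdot (\eps/N) = \eps$, giving $\mu(\{x : d(T^s(x),x) < \eps\}) > 1 - \eps$ as required.

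I do not expect any real obstacle here: the only subtlety is the cosmetic one of choosing a single $s$ that works for all $N$ atoms at once, which is immediate from the fact that the rigidity sequence $(t_n)$ is independent of the chosen measurable set. One could streamline the bound by asking instead that $\sum_i \mu(A_i \triangle T^{-s}(A_i)) < \eps$, which follows by picking $\eps/N$ in the rigidity definition; if one wants to avoid even mentioning $N$, replace $\eps$ throughout the argument by $\eps/2$ and use the equivalent formulation from the preceding lemma.
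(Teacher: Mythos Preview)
Your proof is correct and follows essentially the same approach as the paper: cover $X$ by finitely many sets of diameter $<\eps$, apply rigidity simultaneously to all of them with tolerance $\eps/N$, and bound the bad set by the union of symmetric differences. The only cosmetic difference is that the paper works directly with the open cover $\{B(x_i;\eps/2)\}_{i=1}^N$ rather than disjointifying it into a partition; your partition makes the complement computation an exact equality rather than an inclusion, but the estimates are identical.
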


\begin{proof}
Let $\eps > 0$ be arbitrary. Since $X$ is compact, there are $N \in \N$ and $x_i \in X$ such that $\{ A_i\}_{i=1}^N := \{ B(x_i ; \eps/2)\}_{i=1}^N$ is a cover of $X$. Let $s \in \N$ be such that
$\mu(A_i \triangle T^{-s}(A_i)) < \eps/N$ for all $1 \leq i \leq N$.
Then for each $x \in A_i \cap T^{-s}(A_i)$ we have $d(x,T^s(x)) < \eps$.
The complement has measure
$\mu( \{ x \in X : d(x, T^s(x)) \geq \eps\}  )
\leq \sum_{i=1}^N \mu(A_i \triangle T^s(A_i)) < N \frac{\eps}{N} < \eps$.
\end{proof}

A sufficient condition for rigidity (for ergodic measure-preserving continuous maps on first countable compact Hausdorff space) is that the ergodic measure $\mu$ has discrete spectrum.
This follows by the Halmos-Von Neumann Theorem \cite{HvN42}, which says that the system is isomorphic to a minimal rotation on a compact Abelian group $G$ with Haar measure.
The conditions on $X$ imply that $G$ is metrizable in a way that the group rotation is an isometry, and therefore rigid, see Lemma~\ref{lem:isometry}

\begin{dfn}
    A dynamical system $(X,T, \mu)$ is \emph{partially measure-theoretically rigid} if there exists a constant $\alpha>0$ and a sequence $s_n \to \infty$ such that
    \begin{equation}\label{Eq:part-rigid-def}
        \liminf_{n\to \infty} \mu(T^{-s_n}(A) \cap A) \geq \alpha\mu(A) \text{ for all measurable sets $A$.}
    \end{equation}
\end{dfn}


\begin{prop}\cite{Silva_book}\label{Prop:rigidity-cyl-sets}
    Let $T$ be a finite measure-preserving transformation satisfying~\eqref{Eq:part-rigid-def} for all sets $A$ is a dense algebra. Then $T$ is partially rigid along the same sequence $(s_n)$.
\end{prop}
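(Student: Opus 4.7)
The plan is a standard density argument: I would extend the inequality \eqref{Eq:part-rigid-def} from the dense algebra $\mathcal{A}$ to the full Borel $\sigma$-algebra by approximation, using only $T$-invariance of $\mu$ and the triangle-like inequality for symmetric differences.

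First, fix an arbitrary measurable set $B$ and $\varepsilon > 0$. Since $\mathcal{A}$ is dense in the measure algebra (i.e.\ the set $\{ A \in \mathcal{A} : \mu(A \triangle B) < \varepsilon \}$ is nonempty for every Borel $B$ and every $\varepsilon > 0$), I may choose $A \in \mathcal{A}$ with $\mu(A \triangle B) < \varepsilon$. The key observation is the set-theoretic inclusion
\begin{equation*}
(T^{-s_n}A \cap A)\, \triangle\, (T^{-s_n}B \cap B) \;\subseteq\; T^{-s_n}(A \triangle B) \,\cup\, (A \triangle B),
\end{equation*}
from which, by $T$-invariance of $\mu$, one gets $\mu\bigl((T^{-s_n}A\cap A)\triangle (T^{-s_n}B\cap B)\bigr) \le 2\mu(A\triangle B) < 2\varepsilon$ for every $n$. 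Consequently
\begin{equation*}
\bigl|\mu(T^{-s_n}A\cap A) - \mu(T^{-s_n}B\cap B)\bigr| < 2\varepsilon \qquad \text{for all } n.
\end{equation*}

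Next, I apply \eqref{Eq:part-rigid-def} to $A\in\mathcal{A}$ and combine with the above:
\begin{equation*}
\liminf_{n\to\infty} \mu(T^{-s_n}B \cap B) \;\geq\; \liminf_{n\to\infty} \mu(T^{-s_n}A \cap A) - 2\varepsilon \;\geq\; \alpha\,\mu(A) - 2\varepsilon \;\geq\; \alpha\,\mu(B) - (\alpha + 2)\varepsilon,
\end{equation*}
using $\mu(A) \geq \mu(B) - \mu(A\triangle B) > \mu(B) - \varepsilon$ in the final step. Letting $\varepsilon \to 0^{+}$ yields $\liminf_n \mu(T^{-s_n}B \cap B) \geq \alpha\,\mu(B)$, which is exactly \eqref{Eq:part-rigid-def} for the arbitrary measurable set $B$ along the same sequence $(s_n)$.

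No step is really an obstacle; the only place that requires mild care is verifying the set inclusion for the symmetric difference and ensuring that $T$-invariance is used correctly (here only for preimages, so no measurability issue arises even if $T$ is not invertible). The argument also shows that the constant $\alpha$ is preserved without loss, so the extension is along the very same sequence with the very same lower bound.
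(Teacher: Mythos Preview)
Your proof is correct; the density argument via the symmetric-difference inclusion and $T$-invariance is exactly the standard approach, and the paper itself does not supply a proof but simply cites \cite{Silva_book}, where the same argument appears. Nothing more is needed.
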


\begin{rem}
Note that $T$ is rigid if and only if $T$ is partially rigid for $\alpha = 1$. Hence Proposition~\ref{Prop:rigidity-cyl-sets} is also true for rigid transformations. 
\end{rem}
\begin{prop}
    A finite measure-preserving transformation $T$ is rigid if and only if there is a sequence $\alpha_n \rightarrow 1$ such that $T$ is partially rigid with $\alpha_n$ for all $n$.
\end{prop}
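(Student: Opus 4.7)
The plan is to treat the two directions separately. The forward direction is essentially immediate: if $(t_n)$ is a rigidity sequence, then $\mu(T^{-t_n}A \cap A) \to \mu(A)$ for every measurable $A$, and therefore the same sequence $(t_n)$ witnesses partial rigidity with constant $\alpha$ for every $\alpha \in (0,1)$; in particular, taking any $\alpha_n \in (0,1)$ with $\alpha_n \to 1$ (e.g.\ $\alpha_n = 1 - 1/n$), the single sequence $(t_n)$ serves as a partial-rigidity sequence with constant $\alpha_n$ for each $n$.

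For the converse, I would use a diagonal extraction. Fix a countable algebra $\mathcal{A} = \{A_1, A_2, \dots\}$ dense in the measure algebra (which exists after reducing to a separable quotient of the finite measure space). For each $n$, let $(s_k^{(n)})_k$ be a partial-rigidity sequence supplied by the hypothesis with constant $\alpha_n$, so
\[
\liminf_{k\to\infty}\mu\!\left(T^{-s_k^{(n)}} A_i \cap A_i\right) \geq \alpha_n\,\mu(A_i) \qquad \text{for every } i.
\]
Since at step $n$ only the finitely many sets $A_1,\dots,A_n$ need to be controlled, I can pick $k_n$ inductively so that $t_n := s_{k_n}^{(n)} > t_{n-1}$ and
\[
\mu\!\left(T^{-t_n} A_i \cap A_i\right) \geq \alpha_n\,\mu(A_i) - \tfrac{1}{n} \qquad \text{for every } i \leq n.
\]

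For each fixed $i$, combining the previous inequality (valid once $n \geq i$) with the trivial upper bound $\mu(T^{-t_n} A_i \cap A_i) \leq \mu(A_i)$ and using $\alpha_n \to 1$ gives $\mu(T^{-t_n} A_i \cap A_i) \to \mu(A_i)$, equivalently $\mu(T^{-t_n} A_i \Delta A_i) \to 0$. Hence $(t_n)$ witnesses rigidity on the dense algebra $\mathcal{A}$. The rigid version of Proposition~\ref{Prop:rigidity-cyl-sets} (the remark following it), applied with $\alpha = 1$ and the same sequence $(t_n)$, then extends this convergence to every measurable set, so $T$ is rigid.

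The main obstacle I expect is the diagonal bookkeeping in the backward direction: one must simultaneously force $t_n \to \infty$ and control the partial-rigidity defect on an ever-growing finite family of test sets, while relying on only a liminf inequality at each stage. Once that is arranged, the convergence on $\mathcal{A}$ follows from a two-sided estimate, and the passage from a dense algebra to the full $\sigma$-algebra is standard and absorbed into the already-established Proposition~\ref{Prop:rigidity-cyl-sets}.
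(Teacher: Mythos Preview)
Your argument is correct. The paper states this proposition without proof (it is a standard fact, in the spirit of the nearby material cited from \cite{Silva_book}), so there is no approach in the paper to compare against; your diagonal extraction over a countable dense algebra, followed by the $\alpha=1$ case of Proposition~\ref{Prop:rigidity-cyl-sets} via the remark after it, is the expected route and is sound.
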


\begin{rem}\label{Thm:rigidity_any_inv_mu_a_n_b_n}
In \cite[Remark 2.2]{DonosoMaassRadic2023} it is shown that if every ergodic invariant probability measure for a topological dynamical system is rigid (or partially rigid) with the same rigidity sequence $(t_n)_n$ then every invariant probability measure is rigid (or partially rigid) with the same rigidity sequence. It is also easy to see that if a topological dynamical system has countably many rigid ergodic invariant probability measures (not necessarily with the same rigidity sequence), then every invariant probability measure is partially rigid. This doesn't hold for measures for which the ergodic decomposition consists of uncountably many parts. A counterexample is
the twist map $T:(x,y) \mapsto (x,x+y \pmod 1)$ 
defined on the cylinder $[0,1] \times {\mathbb S}^1$ preserving Lebesgue measure, which is not partially rigid w.r.t.\ Lebesgue measure $\mu$. Indeed, for every $n \geq 1$ and $\eps > 0$, $\mu(\{ z \in [0,1] \times {\mathbb S}^1 : d(z, T^n(z)) < \eps\}) < 3\eps$.
If $\alpha$ was the partial rigidity constant, we take $\eps < \alpha/3$ and $A = (0,\eps) \times {\mathbb S^1}$. Then $\mu(T^n(A) \cap A) < 3\eps \mu(A) < \alpha\mu(A)$, making \eqref{Eq:part-rigid-def} impossible. This example is of course not ergodic, and hence not mixing,
but it satisfies a form of mixing known as Keplerian shear, see \cite{BS24}.
\end{rem}

Clearly, measure-theoretical rigidity, partial rigidity and the corresponding rigidity sequences are preserved under measure-theoretical conjugacy.


\begin{lem}\cite{Silva_book}
    Let $T$ be a transformation on a non-atomic probability space $(X,\mathcal{S},\mu)$. If $T$ is partially rigid, then $T$ is not mixing. 
\end{lem}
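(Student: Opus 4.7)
The plan is to argue by contradiction, using non-atomicity to produce a set of very small positive measure on which mixing would force the $\liminf$ in the partial rigidity inequality to fall below $\alpha \mu(A)$.

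More concretely, suppose $T$ is partially rigid with constant $\alpha > 0$ along a sequence $s_n \to \infty$, and suppose for contradiction that $T$ is mixing, i.e.\ $\lim_{n \to \infty} \mu(T^{-n}(A) \cap B) = \mu(A)\mu(B)$ for all measurable $A,B$. Since $(X,\mathcal{S},\mu)$ is non-atomic, I can choose a measurable set $A$ with $0 < \mu(A) < \alpha$. Applying mixing with $B = A$ and restricting to the subsequence $(s_n)$ yields
\[
\lim_{n\to\infty} \mu(T^{-s_n}(A) \cap A) = \mu(A)^2.
\]
On the other hand, partial rigidity gives
\[
\liminf_{n\to\infty} \mu(T^{-s_n}(A) \cap A) \geq \alpha \mu(A).
\]
Combining these, $\mu(A)^2 \geq \alpha \mu(A)$, so $\mu(A) \geq \alpha$, contradicting the choice of $A$.

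There is no real obstacle here; the only two ingredients are the definition of mixing (which passes to any subsequence) and the existence of arbitrarily small positive-measure sets, guaranteed by non-atomicity. The argument is essentially the standard observation that partial rigidity forces a uniform lower bound of the correlation integral $\mu(T^{-s_n}A \cap A)$ that mixing cannot sustain on small sets.
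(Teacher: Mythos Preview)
Your proof is correct and is the standard argument. Note, however, that the paper does not actually provide its own proof of this lemma: it is stated with a citation to \cite{Silva_book} and left without proof, so there is nothing to compare against beyond observing that your argument is the expected one.
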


    
    
    



    \begin{thm}\label{thm:pmtr_ze}
        If $(X,T,\mu)$ has positive entropy, then it is not partially measure-theoretically rigid. 
    \end{thm}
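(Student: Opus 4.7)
The strategy is to combine Sinai's factor theorem with the lemma immediately preceding the theorem: positive entropy produces a Bernoulli (hence mixing) factor, and partial rigidity — being inherited by measurable factors — is incompatible with mixing.

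Assume for contradiction that $(X,T,\mu)$ is ergodic with $h_\mu(T)>0$ and partially rigid with constants $\alpha>0$ and rigidity sequence $(s_n)$. Sinai's factor theorem supplies a measurable factor $\rho\colon(X,T,\mu)\to(Y,S,\nu)$ with $(Y,S,\nu)$ a Bernoulli shift of positive entropy. Since $\rho\circ T=S\circ\rho$, for every measurable $B\subseteq Y$ one has
\[
\nu(S^{-s_n}B\cap B)=\mu\bigl(T^{-s_n}\rho^{-1}(B)\cap\rho^{-1}(B)\bigr)\geq\alpha\,\mu(\rho^{-1}(B))=\alpha\,\nu(B)
\]
for all sufficiently large $n$; hence $(Y,S,\nu)$ is also partially rigid with the same constants. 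But Bernoulli shifts are mixing, contradicting the lemma immediately preceding the theorem. For the non-invertible $T$ relevant in Section~\ref{sec:enum_systems}, one first passes to the natural extension $(\wh X,\wh T,\wh\mu)$, which preserves ergodicity and entropy; partial rigidity transports via $\pi\circ\wh T=T\circ\pi$ and $\wh T$-invariance of $\wh\mu$ to the algebra generated by the cylinders $\wh T^{-k}\pi^{-1}(A)$, and Proposition~\ref{Prop:rigidity-cyl-sets} then propagates it to every Borel subset of $\wh X$, reducing everything to the invertible case.

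The main obstacle is the appeal to Sinai's factor theorem, which is the deep structural input; the rest is routine transport of properties across factors. An alternative, more self-contained route would invoke the Rokhlin--Sinai theorem identifying countable Lebesgue spectrum of the Koopman operator $U_T$ on $L^2(X,\mu)\ominus L^2(X,\Pi,\mu)$, where $\Pi$ denotes the Pinsker $\sigma$-algebra (a proper sub-$\sigma$-algebra exactly when $h_\mu(T)>0$); Riemann--Lebesgue then forces
\[
\limsup_{n\to\infty}\mu(T^{-s_n}A\cap A)\leq\|E[\mathbf 1_A\mid\Pi]\|_2^{\,2}
\]
for every measurable $A$, and choosing a small $A$ essentially independent of $\Pi$ (with $\mu(A)<\alpha$ and $E[\mathbf 1_A\mid\Pi]\approx\mu(A)$) makes the right-hand side close to $\mu(A)^2<\alpha\mu(A)$, violating the required partial rigidity lower bound.
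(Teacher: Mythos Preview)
Your argument is correct and follows essentially the same route as the paper: invoke Sina\u{\i}'s factor theorem to obtain a Bernoulli (hence mixing) factor, then derive a contradiction. The paper phrases the last step by directly exhibiting a cylinder set $A$ in the Bernoulli factor with $\nu(\sigma^{-n}A\cap A)<\alpha\nu(A)$ for all large $n$ and pulling it back, whereas you first transport partial rigidity along the factor map and then appeal to the preceding lemma; these are equivalent reformulations of the same step. Your additional remarks on the non-invertible case and the alternative Rokhlin--Sinai/Pinsker argument are correct but not needed for the paper's version.
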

    
    \begin{proof}
    Since $(X,T,\mu)$ has positive entropy, by Sina\u{\i}'s factor theorem, it factors onto a Bernoulli shift $(S^{\mathbb{Z}},\sigma,\nu)$ of the same entropy (see \cite[Theorem 12.7]{Denker}). It is clear that for any $\alpha$ there is a cylinder set that does not satisfy condition in the definition 
    $$\nu(\sigma^{n}(A) \cap A) \geq \alpha\nu(A)
    $$
    for any $n$ sufficiently large.
    Pulling back this set for $\mu$ completes the proof.
    \end{proof}

\begin{rem}\label{rem:invertibility_part_rigid}
From Theorem~\ref{thm:pmtr_ze} it follows that all partially measure-theoretically rigid systems have zero entropy, and by \cite[Corollary~4.14.3]{Walters} every zero entropy probability measure preserving system on a compact metric space is invertible a.e. Thus, partially measure-theoretically rigid systems (on compact metric spaces) are invertible $\mu$-a.e.
\end{rem}

\begin{rem}\label{Rem:SOEtoNonRigid}
    Every Cantor minimal system $(X,T)$ has in its strong orbit equivalence class a Cantor minimal system which is non-partially measure-theoretically rigid with respect to at least one of its ergodic invariant probability measures. It follows from Theorem \ref{thm:pmtr_ze} and the result by Sugisaki \cite{Sugisaki2007} which states that for every $\alpha \in [1,\infty)$ there is a minimal subshift of entropy $\log \alpha$ in the strong orbit equivalence class of $(X,T)$ (see also \cite{Durand2010}).
\end{rem}

\subsubsection{Topological Rigidity}

\begin{dfn}\label{definition:u-rigid}
	Let $T\colon X\to X$ be a dynamical system. We say that $T$ is \emph{topologically rigid} if for every $\eps>0$ there exists an $k\in\N$ such that $d_{\text{sup}}(T^{k},\id)<\eps$.
\end{dfn}

In some literature topologically rigid is called uniformly rigid. Clearly, every topologically rigid system is invertible.

\begin{lem}\label{lem:isometry}
 An isometry on a compact metric space is topologically rigid on each of its transitive components.
\end{lem}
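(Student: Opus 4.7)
The plan is to combine Arzel\`a--Ascoli with a pigeonhole argument. First I would restrict to a transitive component $Y \subseteq X$, so that $T|_Y$ is an isometry of a compact metric space with a dense orbit. I would then check that $T|_Y$ is a homeomorphism: injectivity is immediate from $d(Tx,Ty) = d(x,y)$, and for surjectivity I observe that $T(Y)$ is a closed $T$-invariant subset of $Y$; if it were proper, every orbit $\{T^n(x)\}_{n\ge 1}$ would be contained in $T(Y)$ and so could not be dense in $Y$, contradicting transitivity. Consequently each $T^n$, $n \in \Z$, is a well-defined isometry of $Y$.

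Next I would apply the Arzel\`a--Ascoli theorem to the family $\mathcal{F} = \{T^n : n \in \N\}$, viewed as a subset of $C(Y,Y)$ equipped with the sup metric. Equicontinuity is immediate since every $T^n$ is $1$-Lipschitz, and pointwise relative compactness follows from compactness of $Y$. Hence $\mathcal{F}$ is totally bounded under $d_{\text{sup}}$.

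Fixing $\eps > 0$, total boundedness produces integers $i < j$ with $d_{\text{sup}}(T^i, T^j) < \eps$. Because $T^{-i}$ is an isometry of $Y$, the substitution $y = T^{-i}(x)$ yields
\[
d_{\text{sup}}(T^{j-i}, \id) = \sup_{y \in Y} d(T^{j-i}(y), y) = \sup_{x \in Y} d(T^{j}(x), T^{i}(x)) = d_{\text{sup}}(T^j, T^i) < \eps,
\]
so $k := j - i$ witnesses topological rigidity on $Y$, as required.

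The only mild obstacle is the surjectivity step: without it, $T^{-i}$ is unavailable, and the transfer of the estimate from $d_{\text{sup}}(T^i,T^j)$ to $d_{\text{sup}}(T^{j-i},\id)$ breaks down. Beyond that, the argument is a routine Arzel\`a--Ascoli plus pigeonhole pattern, in spirit the classical fact that the closure of $\{T^n\}$ in $C(Y,Y)$ is a compact topological group whenever $T$ is an isometry.
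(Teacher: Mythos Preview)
Your argument is correct and takes a genuinely different route from the paper. The paper proceeds dynamically: it first shows that every point $x$ lies in its own $\omega$-limit set (using compactness and backward invariance of $\omega(x)$ under the invertible isometry), extracts a sequence $n_k\to\infty$ with $T^{n_k}(x)\to x$, and then uses the isometry property directly, via the triangle inequality, to propagate $d(T^{n_k}(z),z)\to 0$ from the single point $x$ to every $z$ in its orbit closure. Your proof instead works in $C(Y,Y)$: Arzel\`a--Ascoli gives total boundedness of $\{T^n\}$, pigeonhole produces $d_{\sup}(T^i,T^j)<\eps$, and composing with the isometry $T^{-i}$ yields the rigidity time $j-i$. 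Your approach is more functional-analytic and makes the compact-group structure of $\overline{\{T^n\}}$ transparent; the paper's is more elementary and hands you an explicit rigidity sequence.

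One small remark on your surjectivity step: as written, the argument that $T(Y)\subsetneq Y$ forces non-density has a tiny gap, since the transitive point $x_0$ itself need not lie in $T(Y)$, so $\{T^n(x_0):n\ge 0\}$ could a priori still be dense while $\{T^n(x_0):n\ge 1\}\subset T(Y)$ is not. This is harmless, because an isometry of a compact metric space into itself is automatically surjective (if $y_0\notin T(Y)$ then $d(T^m(y_0),T^n(y_0))=d(y_0,T^{n-m}(y_0))\ge d(y_0,T(Y))>0$ for all $m<n$, contradicting sequential compactness), so transitivity is not even needed at this point. With that standard fact in hand, the rest of your proof goes through cleanly.
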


\begin{proof}
Clearly isometries are invertible. Let $x$ be arbitrary. By compactness of $X$, the omega-limit set $\omega(x) \neq \emptyset$. If $x \notin \omega(x)$, then $\delta := d(x,\omega(x)) > 0$. Take $n \geq 1$ such that $d(T^n(x),y) < \delta$. Then because $\omega(x)$ is backward invariant, $\delta > d(x,T^{-n}(y) \geq d(x, \omega(y))$, a contradiction.
%
Hence, there is a sequence $n_k \to\infty$ such that $T^{n_k}(x) \to x$.
 Let $z \in \overline{\orb(x)}$, so there is $m_k \to\infty$ such that $T^{m_k}(x) \to z$.
 Then
 \begin{eqnarray*}
  d(T^{n_k}(z),z) &\leq& d(T^{n_k}(z),T^{n_k+m_k}(x),z)  + d(T^{n_k+m_k}(x),T^{m_k}(x))
  + d(T^{m_k}(x),z)  \\
  &=& d(z,T^{m_k}(x))  + d(T^{n_k}(x),x) + d(T^{m_k}(x),z) \to 0
 \end{eqnarray*}
 as $k \to\infty$. Hence $(n_k)_{k \geq 1}$ is a topological rigidity sequence for $\overline{\orb(x)}$.
\end{proof}

It follows immediately that transitive isometries (such as irrational rotations and odometers) are topologically rigid, but we cannot really weaken transitivity, because the twist map in Remark~\ref{Thm:rigidity_any_inv_mu_a_n_b_n} is rigid on each transitive part, but Lebesgue measure, as global invariant measure, is not even partially rigid.

Recall that a measure on a topological space is called regular if every measurable set can be approximated from above by open measurable sets and from below by compact measurable sets. In particular, any Borel probability measure on a compact metric space
is regular.

\begin{prop}
If $T$ is topologically rigid, then $(X,T, \mu)$ is measure-theoretically rigid for every $T$-invariant measure $\mu$.
\end{prop}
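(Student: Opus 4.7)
The plan is to extract a rigidity sequence from the topological rigidity of $T$ and then transfer the uniform smallness of $d_{\text{sup}}(T^{k_n},\id)$ into smallness of $\mu(T^{-k_n}(A)\triangle A)$ using the regularity of the Borel probability measure $\mu$ on the compact metric space $X$.

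First I would produce $(k_n)$ with $k_n \to \infty$ and $\delta_n := d_{\text{sup}}(T^{k_n},\id) \to 0$. Applying Definition~\ref{definition:u-rigid} with $\eps = 1/n$ yields witnesses $k_n$ satisfying $\delta_n < 1/n$, and it only remains to arrange $k_n \to \infty$. If some iterate satisfies $T^{k^*}=\id$, then the sequence $(nk^*)_n$ is already a (trivial) rigidity sequence. Otherwise $d_{\text{sup}}(T^k,\id) > 0$ for every $k\geq 1$, and choosing $\eps_N < \tfrac12 \min_{k \leq N} d_{\text{sup}}(T^k,\id)$ forces the witness provided by topological rigidity to satisfy $k_N > N$, so a diverging subsequence exists.

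Fix a measurable $A \subset X$ and $\eps > 0$. By regularity of $\mu$, choose a compact $K \subset A$ and an open $U \supset A$ with $\mu(U \setminus K) < \eps$. Since $K$ is compact and disjoint from the closed set $X \setminus U$, $\rho := d(K, X \setminus U) > 0$. For all $n$ with $\delta_n < \rho$, the uniform closeness gives both $T^{-k_n}(K) \subset U$ and $K \subset T^{-k_n}(U)$: indeed any $x \in T^{-k_n}(K)$ satisfies $d(x,K) \leq d(x, T^{k_n}(x)) < \rho$, and for $x \in K$ one has $d(T^{k_n}(x), K) < \rho$ so $T^{k_n}(x) \in U$. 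Using $T$-invariance of $\mu$, both halves of $T^{-k_n}(K) \triangle K$ have equal mass and are contained in the $\mu$-set-difference $U \setminus K$ (after applying $T^{-k_n}$ to the second half), giving $\mu(T^{-k_n}(K) \triangle K) < 2\eps$. A triangle inequality for the symmetric difference combined with $\mu$-invariance then yields
\[
\mu\bigl(T^{-k_n}(A) \triangle A\bigr) \leq 2\mu(A \setminus K) + \mu\bigl(T^{-k_n}(K) \triangle K\bigr) < 4\eps.
\]
As $\eps$ was arbitrary, $\mu(T^{-k_n}(A) \triangle A) \to 0$ for every measurable $A$, establishing rigidity along $(k_n)$.

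The main obstacle is essentially bookkeeping: the stated definition of topological rigidity offers only one witness per $\eps$, so one must separately treat the trivial periodic case to guarantee that the witnesses can be arranged to diverge. The measure-theoretic heart of the argument is the standard compact/open sandwich, where $\rho$ provides the margin ensuring that moving any point of $K$ by $T^{k_n}$ keeps it inside $U$.
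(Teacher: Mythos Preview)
Your proof is correct and follows essentially the same compact/open sandwich via regularity as the paper's proof. Your extra care in arranging $k_n\to\infty$ (handling the periodic case separately) is a point the paper glosses over by simply invoking ``a sequence provided by topological rigidity.''
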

\begin{proof}
Since $T$ is topologically rigid, it is invertible.
Fix any Borel set $A$ and any invariant measure $\mu$. Let $t_n$ be a sequence provided by topologically rigid, i.e., $\lim_{n\to\infty}d_{\text{sup}}(f^{t_n},\id)=0$.
Fix any $\varepsilon>0$.
Since $\mu$ is regular, there exists a closed set $C\subset A$ and an open set $C\subset U$ such that $\mu(A\setminus C)<\varepsilon/4$ and $\mu(U\setminus C)<\varepsilon/4$.
There is $N>0$ such that $T^{t_n}(C)\subset U$ and $T^{-t_n}(C)\subset U$ for every $n>N$.
But then
\begin{eqnarray*}
\mu(T^{t_n}(A) \Delta A)&\leq& \mu(T^{t_n}(C) \Delta C)+\mu(T^{t_n}(A\setminus C))+\mu(A\setminus C)\\
&\leq& 2\mu(U\setminus C)+2\mu(A\setminus C)<\varepsilon
\end{eqnarray*}
for every $n>N$. This completes the proof.
\end{proof}

\begin{dfn}\label{def:regrec}
A point $x$ of a dynamical system $(X,T)$ is \textit{regularly recurrent} if for every open neighborhood $U$ of $x$, the set $\{n \in \N : T^n(x) \in U\}$ contains an infinite arithmetic progression.
\end{dfn}

The above theorem implies the following rather simple observation:

\begin{thm}\label{thm:URtranistive}
If $(X,T)$ is a transitive and topologically rigid dynamical system and $X$ is totally disconnected, then $(X,T)$ is conjugate with an odometer (possibly a trivial one).
\end{thm}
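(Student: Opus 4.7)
The plan is to use topological rigidity and total disconnectedness to construct a descending sequence of finite clopen partitions $\mathcal{P}_n$ of $X$ that $T$ cyclically permutes, and then to identify $(X,T)$ with the inverse limit of the induced finite cyclic rotations, i.e.,\ an odometer $\varprojlim \mathbb{Z}/q_n\mathbb{Z}$ (reducing to a finite cyclic rotation, the ``trivial'' case, if the sequence $q_n$ stabilizes). Recall from the remark just after Definition~\ref{definition:u-rigid} that $T$ is already known to be a homeomorphism.

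The core step is the following claim: given any clopen partition $\mathcal{P}=\{P_1,\dots,P_m\}$ of $X$, there is a clopen refinement $\mathcal{P}^*$ of $\mathcal{P}$ that is cyclically permuted by $T$. Since each $P_i$ is clopen in the compact $X$, the quantity $\delta:=\min_i d(P_i,X\setminus P_i)$ is strictly positive. Using topological rigidity, I pick $k$ with $d_{\sup}(T^k,\mathrm{id})<\delta$; then $T^k(P_i)\subseteq P_i$ for each $i$, and because $T^k$ is a homeomorphism of $X$ permuting a partition, $T^k(P_i)=P_i$. Setting $\mathcal{P}^*:=\bigvee_{j=0}^{k-1}T^{-j}\mathcal{P}$ yields a clopen partition with $T^{-1}\mathcal{P}^*=\mathcal{P}^*$, so $T$ acts on $\mathcal{P}^*$ by a permutation. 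Transitivity forces this permutation to be a single cycle: the union over any $T$-orbit of elements of $\mathcal{P}^*$ would otherwise be a proper non-empty clopen $T$-invariant subset of $X$, which is incompatible with the existence of a dense orbit.

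Now I would assemble the tower. Fix a sequence of clopen partitions $\mathcal{R}_n$ of $X$ with $\mathrm{mesh}(\mathcal{R}_n)\to 0$, which exists because $X$ is compact and totally disconnected. Define $\mathcal{P}_n$ inductively by applying the core step to the common refinement $\mathcal{P}_{n-1}\vee\mathcal{R}_n$, so that each $\mathcal{P}_n$ is cyclically permuted by $T$, refines $\mathcal{P}_{n-1}$ and $\mathcal{R}_n$, and has $\mathrm{mesh}(\mathcal{P}_n)\to 0$. Writing $q_n:=|\mathcal{P}_n|$, refinement forces $q_n\mid q_{n+1}$. Choosing cyclic labelings $\pi_n\colon(X,T)\to(\mathbb{Z}/q_n\mathbb{Z},+1)$ compatibly, i.e.,\ so that the starting element of the cycle on $\mathcal{P}_{n+1}$ lies in the chosen starting element of the cycle on $\mathcal{P}_n$, produces factor maps satisfying $\pi_n=\rho_{n+1,n}\circ\pi_{n+1}$ for the natural quotient $\rho_{n+1,n}\colon\mathbb{Z}/q_{n+1}\mathbb{Z}\to\mathbb{Z}/q_n\mathbb{Z}$. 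The induced $\pi\colon X\to\varprojlim_n\mathbb{Z}/q_n\mathbb{Z}$ is continuous and $T$-equivariant, surjective because each $\pi_n$ is, and injective because $\mathrm{mesh}(\mathcal{P}_n)\to 0$ separates points; hence $\pi$ is a conjugacy with an odometer. The main obstacle is the compatible labeling across levels, which is resolved by the observation that each element of $\mathcal{P}_n$ splits into exactly $q_{n+1}/q_n$ elements of $\mathcal{P}_{n+1}$ spaced $q_n$ apart in the $\mathcal{P}_{n+1}$-cycle, so once a base label is fixed at level $n$, a consistent choice at level $n+1$ always exists; the remaining ingredients (mesh going to zero, transitivity excluding multiple cycles, and injectivity of $\pi$) are routine.
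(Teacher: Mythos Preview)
Your proof is correct and shares its key observation with the paper's argument: topological rigidity yields, for any clopen partition $\{P_1,\dots,P_m\}$, an iterate $T^k$ with $T^k(P_i)=P_i$. From there, however, the two proofs diverge. The paper simply notes that this makes every point regularly recurrent (Definition~\ref{def:regrec}), and then invokes the classical characterization of odometers in Theorem~\ref{thm:odometers}: since \emph{all} points are regularly recurrent, all fibers of the almost $1$-$1$ factor map onto the maximal equicontinuous odometer are singletons, so the factor map is a conjugacy. You instead unpack this black box, building the odometer explicitly as $\varprojlim \mathbb{Z}/q_n\mathbb{Z}$ via a nested sequence of cyclically permuted clopen partitions with mesh tending to zero. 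Your route is longer but self-contained, avoiding any appeal to Theorem~\ref{thm:odometers}; the paper's route is shorter but relies on that external structural result.
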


\begin{proof}
If $X$ has an isolated point $x$ then it $X$ is finite and $T$ permutes its points, i.e., $X$ is a single periodic orbit. Hence assume on the contrary that $X$ does not have isolated points, hence it is a Cantor set.

Fix any clopen partition of $X$, say $U_1,\ldots,U_k$ and let $\eps>0$ be such that $\dist (U_i,U_j)>2\eps$ for all $i\neq j$. By topological rigidity, there is $m$ such that $d(T^m(x),x)<\eps$ for all $x\in X$. This in particular implies that $T^m(U_i)\subset U_i$ for any $i$, and therefore, for every $x\in U_i$ we have $m\N\subset  \{n \in \N : T^n(x) \in U_i\}$. By the above we easily obtain that every point in $x$ is regularly recurrent. But transitivity implies that $X$ is an orbit closure of one of these points. The proof is completed by Theorem~\ref{thm:odometers}.
\end{proof}

\begin{cor}\label{cor:decomposition}
If $(X,T)$ is a topologically rigid Cantor dynamical system, then $X$ is a disjoint union of minimal systems, each conjugate to an odometer (possibly a trivial one).
\end{cor}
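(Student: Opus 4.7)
The plan is to reduce to Theorem~\ref{thm:URtranistive} applied orbit-closure by orbit-closure, after establishing that topological rigidity forces every point to be regularly recurrent and hence every orbit closure to be minimal.

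First I would show that every $x \in X$ is regularly recurrent. Fix $x \in X$ and an open neighborhood $U$ of $x$. Since $X$ is a Cantor set, we may shrink $U$ to a clopen set, and by compactness we can extend it to a finite clopen partition $U = V_1, V_2, \ldots, V_k$ whose pieces are pairwise separated by some distance $2\eps > 0$. Topological rigidity gives $m \in \N$ with $d_{\sup}(T^m, \id) < \eps$, so $T^m(V_i) \subset V_i$ for each $i$; in particular $T^{jm}(x) \in U$ for every $j \geq 0$, which is an infinite arithmetic progression, so $x$ is regularly recurrent in the sense of Definition~\ref{def:regrec}.

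Next I would deduce that $M_x := \overline{\orb(x)}$ is a minimal set for every $x$. Indeed, a regularly recurrent point in a compact system has a minimal orbit closure (this is the standard fact used already in the proof of Theorem~\ref{thm:URtranistive}); alternatively, one observes that $M_x$ is a closed $T$-invariant subset of the topologically rigid Cantor system, hence itself topologically rigid and transitive, so by Theorem~\ref{thm:URtranistive} applied to $(M_x, T|_{M_x})$ it is conjugate to an odometer, in particular minimal. Because any two minimal sets either coincide or are disjoint, the collection $\{M_x : x \in X\}$ partitions $X$ into pairwise disjoint minimal subsystems, and by the same application of Theorem~\ref{thm:URtranistive} each one is conjugate to an odometer (trivial when $M_x$ is a single periodic orbit).

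The mild obstacle is the circular appearance of using Theorem~\ref{thm:URtranistive} twice; but this is harmless because the theorem applies to any closed, $T$-invariant, totally disconnected, transitive and topologically rigid subsystem, and each $M_x$ satisfies all these hypotheses (totally disconnected as a closed subset of a Cantor set, topologically rigid because the same sequence $(t_n)$ witnessing rigidity on $X$ restricts to $M_x$, and transitive because it is the orbit closure of $x$). The only other point to verify is that the pieces of the decomposition are genuinely pairwise disjoint, which is immediate from the general fact that distinct minimal sets of a continuous map are disjoint.
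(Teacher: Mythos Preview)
Your proof is correct and follows essentially the same route as the paper: both arguments observe that topological rigidity makes every point (regularly) recurrent, so that each orbit closure (equivalently, each $\omega$-limit set) is a transitive, totally disconnected, topologically rigid subsystem to which Theorem~\ref{thm:URtranistive} applies. The paper's version is simply terser---it notes recurrence and invokes Theorem~\ref{thm:URtranistive} directly on $\omega(x)$---whereas you re-derive regular recurrence and spell out the disjointness of minimal sets explicitly.
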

\begin{proof}
By topological rigidity, every point $x \in X$ is recurrent. Therefore its $\omega$-limit set defines a transitive dynamical system, so the conjugacy is obtained by Theorem~\ref{thm:URtranistive}.
\end{proof}

It is not hard to see that the bases of odometers in the decomposition provided Corollary~\ref{cor:decomposition} cannot be selected uniformly. To see this, fix a sequence of circles $S_n=\{z : |z|=r_n\}$ in the plane with diameters $r_n$ converging to $0$. Define $T$ and $X=\sup_n C_n \cup \{0\}$ in the following way. On each $S_n$ the map $T$ is a rotation by angle $1/n$ and $C_n\Subset S_n$ is an arbitrarily chosen Cantor set consisting of points of period $n$. Then it is clear that $X$ is a Cantor set and $T^{n!}|_{C_i}=\id_{C_i}$ for $i=1,2,\ldots,n$ showing that $T$ is topologically rigid.

\subsection{Rigidity and (weak) mixing}

Mixing precludes partial rigidity, because by definition,
for any $\alpha > 0$ and measurable set $A$ with $\mu(A) < \alpha$,
we have $\lim_{n \to\infty} \mu(A \cap T^n(A)) = \mu(A)^2 < \alpha \mu(A)$.
However, rigidity is compatible with weak mixing, because then the above convergence only has to happen along sequences
$(n_k)$ of full density. The complement $\N \setminus \{ n_k : k \in \N\}$ could contain the rigidity times.
Rigidity together with weak mixing is the typical situation for interval exchange transformations (IETs), as shown by Ferenczi and Hubert \cite{Fh19}.

It is more surprising that also topologically rigid maps can be weakly mixing.
In 1977, Fathi and Herman 
\cite{FH77} used fast approximation method of Anosov and Katok \cite{AK70},
to study particular diffeomorphisms of manifolds. When restricted to the torus, the residual set in the closure $\overline{\mathcal{O}(\mathbb{T}^2)}$
of maps that arise as approximations
$$
\mathcal{O}(\mathbb{T}^2) = 
\{h\circ R_\alpha 
\circ h^{-1} : h 
\in 
\text{Diff}^\infty(\mathbb{T}^2), \alpha
\in \mathbb{T}^2\} 
$$
consists of weakly mixing and minimal systems (see \cite{KK09} and references therein). It is also obvious that the set of topologically rigid transformations is residual in $\overline{\mathcal{O}(\mathbb{T}^2)}$. This provides examples of minimal, topologically rigid and weakly mixing diffeomorphisms of the torus. Approach from \cite{FH77} was also motivation for Glasner and Maon 
\cite{GM} who used Baire category theorem (together with earlier results of Glasner and Weiss \cite{GW}) to obtain (among other examples) skew-product homeomorphism on torus of any dimension $\mathbb{T}^n$, $n\geq 2$ which is minimal, topologically rigid and weakly mixing. Such examples are not possible on the circle, however in \cite{BCO} the authors observed that example of Handel \cite{Handel} obtained by fast approximation technique provides 
minimal, topologically rigid and weakly mixing homeomorphism of the pseudo-circle (a `pathological' one-dimensional continuum). Summarizing all the above examples we obtain the following:

\begin{rem}
There are examples of a connected space $X$ of any given topological dimension $n=1,2,\ldots$ admitting minimal, topologically rigid and weakly mixing dynamical systems.
\end{rem}

\subsection{Basic definitions and facts about Bratteli diagrams}

In this subsection, we present basic definitions and results about Bratteli diagrams that we will use throughout the paper.
For more information on Bratteli diagrams see e.g.\ surveys \cite{Durand2010}, \cite{BezuglyiKarpel2016},\cite{BezuglyiKarpel_2020} and \cite[Section 5.4]{Bruin_book}.

\begin{dfn}
    A {\it Bratteli diagram} is an infinite graph $B=(V,E)$ divided into disjoint union of vertex sets $V =\bigsqcup_{i\geq 0}V_i$ and  edge sets $E=\bigsqcup_{i\geq 1}E_i$, where

(i) $V_0=\{v_0\}$ is a single point;

(ii) $V_i$ and $E_i$ are finite sets for all $i$;

(iii) there exists a source map $s \colon E \rightarrow V$ and a target map (or range map) $t \colon E \rightarrow V$ such that $s(E_i)= V_{i-1}$ and $t(E_i)= V_i$ for all $i \geq 1$.
\end{dfn}

We will call the set of vertices $V_i$ the \textit{$i$-th level} of the diagram $B$.
Let 
$$
X_B = \{x = (x_i)_{i = 1}^{\infty} : x_i \in E_i \text{ and } t(x_i) = s(x_{i+1}) \text{ for } i \geq 1\}
$$ 
be the set of all infinite paths in $B$ that start from $v_0$. The set $X_B$ is endowed with the topology generated by cylinder sets
$[\overline{e}]$, where $\overline{e} = (e_1, ... , e_n)$,  $n \in \mathbb N$ is a finite path and
$[\ov e]:=\{x\in X_B : x_i=e_i,\; i = 1, \ldots, n\}$. With this topology,
$X_B$ is a  0-dimensional compact metric space. 
This topology is generated by the 
following metric: for $x = (x_i), \, y = (y_i) \in X_B$, set 
$$
d(x, y) = \frac{1}{2^N},\  \mbox{ where } N = \min\{i \in \N :  x_i \neq y_i\}.
$$
For vertices $v,w \in V$, let $E(v,w)$ denote the set of finite paths between $v$ and $w$. 

To define a dynamical system on the path space of a Bratteli diagram, we need to take a linear order $>$ on each set $t^{-1}(v)$ for $v
\in V\setminus V_0$. This order defines a partial order on 
the sets of edges $E_i$ for $i=1,2,\ldots$, edges $e,e'$ are comparable if and only if $t(e)=t(e')$. A Bratteli diagram $B=(V,E)$ together with a partial order $>$ on $E$ is called 
\textit{an ordered Bratteli diagram} $B=(V,E,>)$. We call a (finite or infinite) path $e= (e_i)$ 
\textit{maximal (respectively minimal)} if every
$e_i$ has a maximal (respectively minimal) number among all 
elements from $t^{-1}(t(e_i))$. Denote by $X_{\max}$ and $X_{\min}$ the sets of all infinite maximal and all infinite minimal paths in $X_B$.

\begin{dfn}\label{Def:VershikMap} Let $B=(V,E,>)$ be an ordered Bratteli diagram.  Given $x = (x_i)_{i = 1}^{\infty}\in X_B\setminus X_{\max}$, 
let $m$ be the smallest number such that $x_m$ is not maximal. Let 
$y_m$ be the successor of $x_m$ in the set $t^{-1}(t(x_m))$.
Set $\varphi_B(x)= (y_1, y_2,...,y_{m-1},y_m,x_{m+1},...)$
where $(y_0, y_1,..., y_{m-1})$ is the unique minimal path in $E(v_0, s(y_{m}))$. If 
$\varphi_B$ admits an extension 
to the 
entire path space $X_B$ such that $\varphi_B$ becomes a homeomorphism of $X_B$, then $\varphi_B$ is called a \textit{Vershik map}, and 
the system $(X_B,\varphi_B)$ is called a \textit{Bratteli-Vershik system}.

\end{dfn}

If $|X_{\min}| = |X_{\max}| = 1$, 
then the Vershik map can be extended to a homeomorphism of $X_B$ by sending a unique maximal path into the unique minimal path. In the case when $|X_{\max}| > |X_{\min}| = 1$, then the Vershik map can be extended to a continuous surjection of $X_B$ by mapping all maximal paths into the unique minimal path.

\begin{dfn}
Let $x =(x_n)$ and $y =(y_n)$  be two paths from $X_B$. We
 say that $x$ and $y$ are {\em tail equivalent} (in symbols,  $(x,y)
  \in   \mathcal R$)  if there exists some $n$ such that $x_i = y_i$ for
   all  $i\geq n$.
\end{dfn}

Let $\mu$ be a Borel probability non-atomic measure. We say that $\mu$ is
   $\mathcal R$-invariant measure
 if $\mu([\ov e]) = \mu([\ov e'])$ for any two  finite paths
 $\ov e,\ov e' \in  E(v_0, v)$, where $v \in  V_n$ is an arbitrary vertex, and
 $n \geq1$. Denote by $\mathcal{M}_1(\mathcal{R})$ the set of all Borel probability $\mathcal{R}$-invariant measures and by $\mathcal{M}_1(\varphi_B)$ the set of all Borel probability $\varphi_B$-invariant measures.

A homeomorphism without periodic points is called aperiodic if it has no periodic points.
We say that equivalence relation $\mathcal{R}$ is aperiodic if for every $x \in X_B$ its equivalence class is infinite. 

\begin{lem}\cite{BezuglyiKwiatkowskiMedynetsSolomyak2010}
    Let $B=(V,E,>)$ be an ordered Bratteli diagram which admits an aperiodic Vershik map $\varphi_B$ and let the tail equivalence relation $\mathcal{R}$ be aperiodic. Then $\mathcal{M}_1(\mathcal{R}) = \mathcal{M}_1(\varphi_B)$.
\end{lem}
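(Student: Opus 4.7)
The plan is to establish the set equality by proving the two inclusions separately, with the reverse inclusion being the more delicate one.

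For $\mathcal{M}_1(\varphi_B) \subseteq \mathcal{M}_1(\mathcal{R})$, I would proceed by direct computation on cylinders. Fix any $v \in V_n$ and list $E(v_0, v) = \{\ov e_1, \ov e_2, \ldots, \ov e_{k_v}\}$ in the induced lex order (with $\ov e_1$ minimal and $\ov e_{k_v}$ maximal to $v$). The Vershik successor rule modifies only the initial segment up to the first non-maximal coordinate, so for each $i < k_v$ and every $x \in [\ov e_i]$ the path $\varphi_B(x)$ has initial segment $\ov e_{i+1}$ and the same tail $(x_{n+1}, x_{n+2}, \ldots)$ as $x$. Hence $\varphi_B([\ov e_i]) = [\ov e_{i+1}]$, and $\varphi_B$-invariance of $\mu$ gives $\mu([\ov e_i]) = \mu([\ov e_{i+1}])$; iterating, all $k_v$ cylinders over $v$ carry equal $\mu$-mass, which is precisely the defining condition for $\mu \in \mathcal{M}_1(\mathcal{R})$.

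For the reverse inclusion $\mathcal{M}_1(\mathcal{R}) \subseteq \mathcal{M}_1(\varphi_B)$, the first observation is that $\varphi_B$ belongs to the full group $[\mathcal{R}]$ of $\mathcal{R}$: $\varphi_B(x)$ differs from $x$ only in finitely many initial coordinates, so $(x, \varphi_B(x)) \in \mathcal{R}$ for every $x$. It then suffices to upgrade the cylinder $\mathcal{R}$-invariance condition to invariance under an arbitrary $\gamma \in [\mathcal{R}]$. For such a $\gamma$ define the Borel function $N(x) = \min\{n \in \N : \gamma(x)_i = x_i \text{ for all } i > n\}$, and for each $n$ and $v \in V_n$ consider the Borel set $X_{v,n} = \{x : N(x) \leq n,\ t(x_n) = v\}$. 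On $X_{v,n}$ the automorphism $\gamma$ acts as a tail-preserving Borel map taking $[\ov e_i(v)] \cap X_{v,n}$ to $[\ov e_{\sigma(i)}(v)] \cap X_{v,n}$ for some Borel-varying permutation $\sigma \in S_{k_v}$. Since $\mathcal{R}$-invariance forces all $k_v$ cylinders over $v$ to have equal $\mu$-measure with the identical tail structure, each such permutation preserves $\mu$ on every Borel subset of $X_{v,n}$. Summing over $(v,n)$ yields $\mu \circ \gamma = \mu$; applying this to $\gamma = \varphi_B$ gives $\mu \in \mathcal{M}_1(\varphi_B)$.

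The aperiodicity hypotheses enter indirectly, to guarantee non-atomicity of the measures involved: aperiodicity of $\varphi_B$ rules out atoms for any $\varphi_B$-invariant probability, and aperiodicity of $\mathcal{R}$ combined with the equal-cylinder-measure condition rules them out for $\mathcal{R}$-invariant probabilities (an atom $\mu(\{x\}) > 0$ would force $|E(v_0, t(x_n))| \leq 1/\mu(\{x\})$ for all $n$, contradicting infiniteness of the $\mathcal{R}$-class through $x$). The main technical obstacle will be the second direction, where one must track an arbitrary $\gamma \in [\mathcal{R}]$ through the Borel decomposition and verify that the tail-preserving permutations preserve $\mu$ not merely globally but on each Borel piece $X_{v,n}$; this reduces to the fact that the cylinder condition makes $\mu|_{X_{v,n}}$ factor as a uniform counting measure on $\{1, \ldots, k_v\}$ times a common measure on tails from $v$.
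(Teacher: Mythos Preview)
The paper does not supply its own proof of this lemma; it is quoted verbatim from \cite{BezuglyiKwiatkowskiMedynetsSolomyak2010} and used as a black box. So there is no ``paper's proof'' to compare against, and I will comment on your argument directly.

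Your forward inclusion $\mathcal{M}_1(\varphi_B)\subseteq\mathcal{M}_1(\mathcal{R})$ is correct: for $\ov e_i$ non-maximal in $E(v_0,v)$ the Vershik map acts within the first $n$ coordinates, giving $\varphi_B([\ov e_i])=[\ov e_{i+1}]$, and invariance follows.

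The reverse inclusion has a genuine gap. You assert that $\varphi_B\in[\mathcal{R}]$ because ``$\varphi_B(x)$ differs from $x$ only in finitely many initial coordinates,'' but this is false on $X_{\max}$: for a maximal path $x$ the image $\varphi_B(x)$ is determined by the continuous extension and is typically \emph{not} tail-equivalent to $x$ (e.g.\ the unique maximal path maps to the unique minimal path). Consequently your level function $N(x)$ is undefined (infinite) on $X_{\max}$, and the decomposition into sets $X_{v,n}$ misses this piece. The repair is to show $\mu(X_{\max})=0$ for every $\mu\in\mathcal{M}_1(\mathcal{R})$. This is where aperiodicity of $\mathcal{R}$ is used \emph{essentially}, not ``indirectly'': one shows that aperiodicity forces $\min_{v\in V_n}h_n(v)\to\infty$ (via a K\"onig-type argument on the vertices with bounded height), and then $\mu(X_{\max})\le\sum_{v\in V_n}\mu(X_n(v))/h_n(v)\le 1/\min_v h_n(v)\to 0$.

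A smaller point: your full-group step is loosely phrased. For a general $\gamma\in[\mathcal{R}]$ the map on initial $n$-segments need not be a single permutation of $\{1,\dots,k_v\}$, even Borel-varying; it is cleaner to observe that $\varphi_B$ restricted to $X_B\setminus X_{\max}$ is a countable disjoint union of the canonical tail-preserving homeomorphisms $\gamma_{\ov e,\ov f}\colon[\ov e]\to[\ov f]$ (with $t(\ov e)=t(\ov f)$), each of which preserves $\mu$ because it sends subcylinders $[\ov e\cdot\ov g]$ to $[\ov f\cdot\ov g]$ of equal $\mu$-mass. Combined with $\mu(X_{\max})=0$ this gives the inclusion.
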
 

In this paper, we will consider only such Bratteli 
diagrams for which the tail equivalence relation and the Vershik map are aperiodic. 

Let $X_n(v)$ denote the set of all paths from $X_B$ that go through the vertex $v \in V_n$, and $h_n(v)$ denote the cardinality of the set of all finite paths  between $v_0$ and $v$. We call $X_n{(v)}$ the \textit{tower} corresponding to the vertex $v$ on level $n$, and $h_n(v)$ the \textit{height} of the tower $X_n(v)$.

\begin{dfn}
Given a Bratteli diagram $B$, the {\em $n$-th  incidence matrix}
$F_{n}=(f^{(n)}_{v,w}),\ n\geq 1,$ is a $|V_{n}|\times |V_{n-1}|$
matrix such that 
$$
f^{(n)}_{v,w} = |\{e\in E_{n} : t(e) = v, s(e) = w\}|
\text{ for } v\in V_{n} \text{ and } w\in V_{n-1}. 
$$
A Bratteli diagram is called \textit{stationary} if $F_n = F$ for every $n \geq 2$. Then the matrix $F$ is called the incidence matrix of the diagram. Unless stated otherwise, we will assume that every diagram has a \emph{``simple hat''}, which means that there is a single edge from the vertex $v_0$ to each vertex $v \in V_1$.
\end{dfn}

\begin{dfn} \label{telescoping_definition}
Let $B$ be a Bratteli diagram, and $n_1 = 0 <n_2<n_3 < \ldots$ be a 
strictly increasing sequence of integers. The {\em telescoping of $B$ with respect to $
\{n_k\}_{k = 1}^{\infty}$} is the Bratteli diagram $B'$, whose $k$-level vertex set $V_k'$ is 
$V_{n_k}$ and whose incidence matrices $\{F_k'\}_{k = 1}^{\infty}$ are defined by
\[F_k'= F_{n_{k+1}-1} \; \cdots \; F_{n_k},\]
where $\{F_n\}_{n = 1}^{\infty}$ are the incidence matrices for $B$.
\end{dfn}

\begin{dfn}
    We say that a Bratteli diagram $B = (V,E)$ has \textit{finite rank} if there exists some
 $k\in \mathbb N$ such that $|V_n| \leq k$  for all $n\geq 1$. For a finite rank diagram $B$, we  say that $B$ has
\textit{rank $d$} if  $d$ is the smallest integer such that $|V_n|=d$
 for infinitely many $n$. A Cantor dynamical system has \textit{topological finite rank} $d$ if it is topologically conjugate to a Vershik map acting on a Bratteli diagram of rank $d$ and $d$ is the smallest such number.  
\end{dfn}

\begin{rem} If there is an increasing sequence of natural numbers $\{n_k\}_{k = 1}^{\infty}$ such that $|V_{n_k}| = d$ for all $k \in \mathbb{N}$ then after telescoping with respect to  $\{n_k\}_{k = 1}^{\infty}$ we obtain a Bratteli diagram of rank $d$ which has exactly $d$ vertices on each level.
\end{rem}

A \textit{$(p_n)$-odometer} is a Bratteli diagram with $V_n = \{ v \}$ and $p_n$ edges in $E_n$ for all $n\in \N$. This gives a Vershik homeomorphism for any ordering of incoming edges. For more information and other definitions of odometers as adding machines or inverse limits, see~\cite{Downar}. It was proved in \cite{DownarowiczMaass2008}, that every Cantor minimal system of finite topological rank is either an odometer or a subshift:

\begin{thm}\cite{DownarowiczMaass2008}\label{Thm:DownarMaass}
    Every Cantor minimal system of finite topological rank $d > 1$ is expansive.
\end{thm}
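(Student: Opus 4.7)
The plan is to exhibit a finite clopen topological generator for $(X,T)$, which for a zero-dimensional compact system is equivalent to expansivity. Concretely, I will try to produce a finite clopen partition $\mathcal{P}$ such that the induced coding map $\pi : X \to \mathcal{P}^{\mathbb{Z}}$, sending $x$ to the sequence of atoms of $\mathcal{P}$ visited by $(T^k(x))_{k \in \Z}$, is injective; then $\pi$ is a conjugacy onto a subshift, which is expansive.

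First, I would pass to a Bratteli-Vershik model $(X_B, \varphi_B)$ of $(X,T)$ on a diagram $B=(V,E,>)$ with $|V_n|=d$ for every $n\geq 1$, as permitted by the remark following the definition of topological rank. After a further telescoping, I may assume every incidence matrix $F_n$ is strictly positive, which is compatible with minimality. The candidate generator is then $\mathcal{P}=\{[e]:e\in E_1\}$, the clopen partition into length-one cylinders; it has at most $d\cdot d$ atoms.

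The bulk of the argument is to show that this $\mathcal{P}$ separates points, i.e., that every cylinder $[\overline{e}]$ of length $n$ satisfies
\[
  [\overline{e}] \;=\; \bigcap_{k=-N(n)}^{N(n)} \varphi_B^{-k}(A_k)
\]
for suitable atoms $A_k\in\mathcal{P}$ and some finite $N(n)$. I would proceed by induction on $n$: granted separation at level $n-1$, one must check that within each tower $X_n(v)$, $v\in V_n$, the $\mathcal{P}$-name along the piece of $\varphi_B$-orbit through the tower distinguishes it from the other $d-1$ towers and distinguishes distinct heights in the same tower. Distinguishing heights is routine from the inductive hypothesis; distinguishing towers is the real content.

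The main obstacle is this last step, where the bound $d$ and minimality must be combined. The approach I would take is by contradiction: assume the coding is not injective, so there exist $x\neq y$ in $X_B$ with identical $\mathcal{P}$-name under all iterates. Using compactness and pigeonhole on the finite vertex sets $V_n$, one extracts a coherent identification of tower-labels at every level, which descends to an equivariant quotient of $B$ that collapses at least two vertex-classes. This quotient has strictly smaller rank, and iterating the reduction one is forced either into a direct contradiction or down to a rank-$1$ quotient, in which case $(X,T)$ factors onto an odometer in a way that, combined with minimality and the original rank $d>1$, must actually be an isomorphism — contradicting $d>1$. The delicate part is controlling how the identifications propagate between levels after telescoping; this is precisely where the careful combinatorial analysis of Downarowicz and Maass, exploiting the uniform bound $d$ on $|V_n|$, is needed.
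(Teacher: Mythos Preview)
The paper does not give its own proof of this theorem; it is simply quoted from \cite{DownarowiczMaass2008} and used as a black box. So there is nothing in the present paper to compare your attempt against.

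Regarding your sketch itself: the overall strategy---code by the first-level cylinder partition and argue by contradiction using pigeonhole on the bounded vertex sets---is indeed the shape of the Downarowicz--Maass argument. However, your concluding step is wrong as stated. Factoring onto an odometer, together with minimality, does not force an isomorphism: every Toeplitz shift is a minimal almost $1$-$1$ extension of an odometer without being one, and such shifts can have arbitrary finite rank $d>1$. What the actual argument produces from a pair of non-separated points is not a \emph{factor} of smaller rank, but a Bratteli--Vershik representation of the \emph{same} system with strictly fewer vertices per level; this directly contradicts that $d$ was chosen minimal. The ``coherent identification of tower-labels'' you mention has to be shown to preserve the full path space and Vershik dynamics up to conjugacy, not merely to induce a quotient map. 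You correctly flag this propagation step as the delicate combinatorial core, but your description of what comes out of it (a factor map down to rank $1$) is not what the argument delivers and would not close the contradiction.
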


The following definition can be found for instance in \cite{BezuglyiKwiatkowskiMedynetsSolomyak2013}:
\begin{dfn}
    A Bratteli diagram of finite rank is of \textit{exact finite rank} if there is a finite invariant measure $\mu$ and a constant $\delta > 0$ such that after telescoping $\mu(X_n(v)) \geq \delta$ for all $n \in \mathbb{N}$ and $v \in V_n$.
\end{dfn}

\begin{thm}\cite{BezuglyiKwiatkowskiMedynetsSolomyak2013}\label{thm:BKMS13ExactFinRank}
Let $B = B(F_n)$ be a Bratteli diagram of finite rank. Then
\begin{enumerate}
    \item if there is a constant $c > 0$ such that $\frac{m_n}{M_n} \geq c$ for all $n$, where $m_n$ and $M_n$ are the smallest and the largest entry of $F_n$ respectively, then $B$ 
    is of exact finite rank,

    \item if $B$ is of exact finite rank, then $B$ is uniquely ergodic.
\end{enumerate}
\end{thm}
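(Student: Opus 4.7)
The plan is to handle the two parts separately, as they rely on different structural facts about finite rank Bratteli diagrams.

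For part (1), the strategy is to leverage the bounded ratio of entries of $F_n$ to make all tower masses comparable across vertices at each level, and then use the partition identity to bound each from below. First I would telescope so that $|V_n|=d$ for all $n$. Since $h_n(v)=\sum_{w} f^{(n)}_{v,w}\, h_{n-1}(w)$, the two-sided bound $m_n\sum_w h_{n-1}(w)\leq h_n(v)\leq M_n \sum_w h_{n-1}(w)$ yields $\min_v h_n(v)/\max_v h_n(v)\geq m_n/M_n \geq c$. For any invariant probability measure $\mu$, write $\mu(X_n(v))=h_n(v)\, p_n(v)$ where $p_n(v)$ is the common measure of a single finite path $v_0\to v$. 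The dual relation $p_n(v)=\sum_{w\in V_{n+1}} f^{(n+1)}_{w,v}\, p_{n+1}(w)$ similarly gives $\min_v p_n(v)/\max_v p_n(v)\geq c$. Combining, $\min_v \mu(X_n(v))/\max_v \mu(X_n(v))\geq c^2$, and together with $\sum_{v\in V_n}\mu(X_n(v))=1$ and $|V_n|=d$, this forces $\mu(X_n(v))\geq c^2/d =:\delta$ for every $n$ and $v$. This bound holds for any invariant probability measure (and existence of at least one such measure follows from compactness of $X_B$ and finiteness of rank), establishing exact finite rank.

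For part (2), I would argue by contradiction: assume two distinct ergodic invariant probability measures $\mu_1,\mu_2$ exist. The first ingredient is the classical fact (proved in \cite{BezuglyiKwiatkowskiMedynetsSolomyak2013}) that a finite rank Bratteli diagram admits at most $d$ ergodic invariant probability measures, so ergodic decomposition of any invariant measure is a finite convex combination. Let $\mu=\sum_i \alpha_i \mu_i$ be the witnessing measure from the exact finite rank hypothesis, with $\alpha_1,\alpha_2>0$. The uniform lower bound $\mu(X_n(v))\geq \delta$ then forces each $\mu_i$ appearing with positive weight to have $\mu_i(X_n(v))\geq \delta/\alpha_i$ along a subsequence, or more carefully, to have tower mass distributions whose supports together cover $V_n$ with uniform mass. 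The plan is to convert the mutual singularity $\mu_1\perp \mu_2$ into a spatial separation at the vertex level: by Birkhoff's ergodic theorem and the Rokhlin tower structure of $\varphi_B$ on each $X_n(v)$, a $\mu_i$-generic point visits each cylinder inside $X_n(v)$ with asymptotic frequency $p_n^{(i)}(v)$. Singular measures must then, for sufficiently deep $n$, concentrate their mass on (essentially) disjoint subsets $S_1,S_2\subset V_n$ of vertices. This would partition $\mu$-mass as $\mu(X_n(v))\approx \alpha_i \mu_i(X_n(v))$ for $v\in S_i$, forcing vertices outside $S_1\cup S_2$ to have small $\mu$-mass and contradicting $\mu(X_n(v))\geq \delta$ once $\delta$ exceeds the residual.

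The main obstacle will be making the separation step quantitative: the towers $X_n(v)$ are not invariant under $\varphi_B$, so naively approximating the singularity sets by unions of full towers is not immediate. I expect the cleanest route is to invoke the characterization of ergodic measures on finite rank diagrams via the asymptotic direction of $(p_n(v))_{v\in V_n}$ under the cocycle $F_n^{T}$, and to observe that exact finite rank implies a uniform contraction of the Hilbert projective metric on the positive cone by the products $F_{n+1}^{T}\cdots F_m^{T}$; this contraction forces the inverse-limit cone to be one-dimensional, hence unique ergodicity. The difficulty lies in verifying that the lower bound $h_n(v)p_n(v)\geq \delta$ indeed translates into sufficient positivity of the relevant matrix products to apply a Birkhoff contraction argument, and this is where the proof requires the most care.
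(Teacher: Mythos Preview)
The paper does not prove this theorem; it is quoted from \cite{BezuglyiKwiatkowskiMedynetsSolomyak2013} and stated without argument. So there is no ``paper's own proof'' to compare against, and your proposal should be judged on its own merits.

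Your argument for part~(1) is correct and is essentially the standard one: the bound $m_n/M_n\geq c$ makes both the heights $h_n(v)$ and the cylinder weights $p_n(v)$ comparable across $v\in V_n$ with ratio at least $c$, so the tower masses $h_n(v)p_n(v)$ are comparable with ratio at least $c^2$, and since they sum to $1$ over $d$ vertices each is at least $c^2/d$.

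Part~(2), however, has a real gap. Your separation idea is the right intuition, but you have not supplied the mechanism that forces two distinct ergodic measures to concentrate on \emph{disjoint} vertex subsets at deep levels. Mutual singularity of $\mu_1,\mu_2$ on $X_B$ does not by itself yield disjoint supports at the level of towers $X_n(v)$; one needs the structural result (this is the content of the analysis in \cite{BezuglyiKwiatkowskiMedynetsSolomyak2013}) that every ergodic measure on a finite rank diagram is, after telescoping, the extension of a measure from a vertex subdiagram, and that distinct ergodic measures correspond to disjoint such subdiagrams. Without this, your contradiction does not close: nothing you wrote rules out both $\mu_1$ and $\mu_2$ giving mass bounded away from zero to every tower. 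Your alternative via Birkhoff contraction of the Hilbert metric also does not go through as stated: exact finite rank gives a lower bound on $h_n(v)p_n(v)$ for \emph{one} measure, not positivity of the matrix products $F_{n+1}^{T}\cdots F_m^{T}$, and the contraction argument needs the latter (or an equivalent primitivity condition), which is not part of the hypothesis.
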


The following theorem can be found for instance in \cite{Danilenko2016}:
\begin{thm}\label{thm:Danilenko}
  Let $\varphi_B$ be a Vershik map on a Bratteli diagram of exact finite rank. Then $\varphi_B$ is partially rigid with respect to the unique ergodic invariant probability measure $\mu$ on $B$.
\end{thm}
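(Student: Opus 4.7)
The strategy is to exploit the Kakutani-Rokhlin tower structure underlying the Bratteli-Vershik system together with the uniform lower bound on tower measures coming from exact finite rank, reducing partial rigidity on arbitrary measurable sets to a combinatorial estimate on cylinder sets.

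By Theorem~\ref{thm:BKMS13ExactFinRank}(2), the diagram is uniquely ergodic, so the measure $\mu$ is well-defined. After telescoping I may assume $|V_n|=d$ for all $n\geq1$ and $\mu(X_n(v))\geq\delta>0$ for all $n$ and $v\in V_n$. Recall that $X_B=\bigsqcup_{v\in V_n}X_n(v)$ is a refining sequence of Kakutani-Rokhlin partitions, with each tower $X_n(v)$ subdivided into $h_n(v)$ rungs of equal measure $\mu(X_n(v))/h_n(v)$, and $\varphi_B$ sending each non-top rung to the rung immediately above. Aperiodicity of the tail-equivalence relation forces $h_n(v)\to\infty$ for every $v$.

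By Proposition~\ref{Prop:rigidity-cyl-sets} it suffices to establish partial rigidity on the dense algebra of cylinder sets. I would set $s_n=\lfloor m_n/K\rfloor$ where $m_n=\min_{v\in V_n}h_n(v)$ and $K$ is a large constant to be chosen. Then $s_n\to\infty$, and the set $Y_n$ consisting of the bottom $h_n(v)-s_n$ rungs of every tower satisfies $\mu(Y_n)\geq 1-1/K$; on $Y_n$ the iterate $\varphi_B^{s_n}$ acts as a pure rung-shift by $s_n$ within the same tower.

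Fix a cylinder $A=[\overline{e}]$ at level $N$, ending at $u\in V_N$. For $n>N$, the intersection $A\cap X_n(v)$ consists of exactly one rung per path from $u$ to $v$ of length $n-N$, and by exact finite rank the ratio $\mu(A\cap X_n(v))/\mu(X_n(v))$ is uniformly bounded below in $n$ and $v$. The remaining task is to show that a fraction at least $\alpha>0$ (independent of $A$ and $n$) of the rungs of $A\cap X_n(v)\cap Y_n$ is mapped by $\varphi_B^{s_n}$ back into $A\cap X_n(v)$; summing over $v\in V_n$ then yields $\mu(\varphi_B^{s_n}(A)\cap A)\geq\alpha\mu(A)$.

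The main obstacle is this last combinatorial step. In the lex-from-top Vershik ordering on $X_n(v)$, the rungs of $A$ are not contiguous but lie at prescribed offsets within sub-blocks recursively grouped by their terminal edges. Verifying that shifts by $s_n$ preserve enough of this scattered structure requires iterating the analysis across each intermediate level between $N$ and $n$, and invoking exact finite rank at each step to bound the "boundary losses" (rungs pushed above the top of some sub-tower) by a convergent geometric sum, leaving a positive residual that yields the partial rigidity constant.
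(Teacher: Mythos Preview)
The paper does not prove this theorem; it is quoted from \cite{Danilenko2016} (see also \cite{BezuglyiKwiatkowskiMedynetsSolomyak2013}) as a known result and only invoked later, e.g.\ in Proposition~\ref{prop:qpr} and Example~\ref{ex:rigidToeplitz_2x2_exactfiniterank}. So there is no in-paper proof to compare against, but your outline can still be assessed on its own merits.

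Your sketch has a genuine gap, and you have correctly located it---but the difficulty is structural, not just a matter of unfinished bookkeeping. The choice $s_n=\lfloor m_n/K\rfloor$ is the wrong kind of sequence. A cylinder $A=[\overline e]$ ending at $u\in V_N$ occupies, inside each tower $X_n(v)$, exactly one rung in every copy of the sub-tower $X_N(u)$ appearing in the recursive decomposition of $X_n(v)$. A shift by an \emph{arbitrary} amount $s_n$ within $X_n(v)$ moves height $j$ to height $j+s_n$, and there is no mechanism forcing these images to land on $A$-rungs again: the misalignment occurs not only at the top of the tower but at \emph{every} intermediate level between $N$ and $n$, and for a generic $s_n$ there is no reason the resulting losses form a summable (let alone geometric) series. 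Your closing paragraph describes what one would like to happen rather than an argument for why it does.

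The standard route instead takes $s_n=h_n(v_n)$ for a well-chosen vertex $v_n$. A shift by a full tower height respects the block structure: if a point sits in a $v_n$-block inside some level-$(n{+}1)$ tower and the \emph{next} block in the Vershik order also has source $v_n$, then $\varphi_B^{h_n(v_n)}$ carries it to the same relative rung in that next block and hence stays in every level-$N$ cylinder it started in. Exact finite rank (possibly after telescoping) is then used to guarantee that such favourable transitions carry a $\mu$-proportion bounded away from zero uniformly in $n$. This is exactly the mechanism exploited in the paper's own partial-rigidity computations (cf.\ the proofs of Theorems~\ref{Thm:rigid_mu_from_odom_withoutorder}, \ref{Thm:rigid_mu_from_odom} and the partial-rigidity half of Theorem~\ref{thm_stat_nonsimple_BD}), and it is along these lines that the cited proof proceeds. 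If you want to carry your programme through, replace $\lfloor m_n/K\rfloor$ by a tower height and argue at the level of adjacent sub-blocks rather than individual rungs.
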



\medskip
\section{Rigidity for Toeplitz systems}\label{Sect:Toeplitz}    
 
 A topological factor map $\pi: X \mapsto Y$ between two dynamical systems $(X,T)$ and $(Y,S)$ is a continuous surjective map such that $\pi \circ T = S\circ\pi$. We say it provides an \textit{almost $1$-$1$ extension} if the set of points in $Y$ having singleton fibers is residual. For minimal systems $(Y,S)$ it suffices to show one such singleton fiber exists.

The following is an old characterization of regularly recurrent points (e.g. see \cite[Theorem~5.1]{Downar}):

\begin{thm}\label{thm:odometers}
    A topological dynamical system $(X,T)$ is a minimal almost 1-1 extension of an odometer $(G_s,\tau)$ if and only if it is the orbit closure of a regularly recurrent point. The set of all regularly recurrent points in $(X,T)$ coincides with the collection of all singleton fibers, and it is a dense $G_\delta$ subset of $X$.
\end{thm}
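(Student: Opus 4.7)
The plan is to prove the two implications separately and to identify the regularly recurrent points with the singleton fibers along the way. The key tool in both directions is that clopen cylinders in an odometer form a neighborhood basis of every point and their return time sets are exactly arithmetic progressions.

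For the forward implication, suppose $\pi:(X,T)\to(G_s,\tau)$ is a minimal almost $1$-$1$ extension and fix $x\in X$ with $\pi^{-1}(\pi(x))=\{x\}$. Given any open neighborhood $U\ni x$, the compact set $X\setminus U$ is disjoint from $\pi^{-1}(\pi(x))$, so by continuity of $\pi$ there is a clopen cylinder $C\ni\pi(x)$ in $G_s$ with $\pi^{-1}(C)\subset U$. The return time set $\{n\geq 0:\tau^n(\pi(x))\in C\}$ is an arithmetic progression $p\Z_{\geq 0}$, and the factor property yields $T^{kp}(x)\in\pi^{-1}(C)\subset U$ for all $k\geq 0$. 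Hence $x$ is regularly recurrent, and by minimality $\overline{\orb(x)}=X$. The very same argument shows that every singleton-fiber point is regularly recurrent.

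For the converse, let $X=\overline{\orb(x)}$ with $x$ regularly recurrent. Minimality of $(X,T)$ follows from the standard observation that if $T^{n_k}(x)\to y$ and $T^{kp}(x)\in U$ for all $k\geq 0$ and a fixed neighborhood $U\ni x$, then $T^{n_k+kp}(x)\to T^{kp}(y)$ lies in $\overline{U}$, so the orbit of $y$ returns into $\overline{U}$. To construct the odometer factor, I would choose nested open neighborhoods $U_n\ni x$ with $\bigcap_n\overline{U_n}=\{x\}$ and periods $p_1\mid p_2\mid\cdots$ with $T^{kp_n}(x)\in U_n$ for all $k\geq 0$. A Kakutani--Rokhlin-type refinement lets us additionally assume that the towers $\{T^j(U_n):0\leq j<p_n\}$ are pairwise at positive distance. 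Set $G=\varprojlim\Z/p_n\Z$ with odometer map $\tau$ and define $\pi$ on the orbit of $x$ by $\pi(T^m(x))=(m\bmod p_n)_{n\geq 1}$. The separation property makes $\pi$ uniformly continuous on $\orb(x)$, so it extends continuously to $X$ and intertwines $T$ with $\tau$. Any $y\in\pi^{-1}(\pi(x))$ must be a limit of $T^{m_k}(x)$ with $m_k\equiv 0\pmod{p_n}$ eventually, hence lies in $\overline{U_n}$ for every $n$; so $\pi^{-1}(\pi(x))\subset\bigcap_n\overline{U_n}=\{x\}$ and $\pi$ is almost $1$-$1$.

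For the identification statement, the singleton-fiber set $S=\bigcap_n\{y:\diam\pi^{-1}(\pi(y))<1/n\}$ is $G_\delta$ by upper semicontinuity of the fiber map, and since $S$ is $T$-invariant, non-empty, and $(X,T)$ is minimal, $S$ contains the dense orbit of any of its points and is therefore dense. The non-trivial implication regularly recurrent $\Rightarrow$ singleton fiber can be obtained by applying the converse construction to an arbitrary regularly recurrent $y$: this produces an odometer factor $\pi_y:X\to G_y$ with $\pi_y^{-1}(\pi_y(y))=\{y\}$. Then both $\pi$ and $\pi_y$ realize $X$ as an almost $1$-$1$ extension of an odometer, which for a minimal system must coincide with its maximal equicontinuous factor; hence there is an isomorphism $\phi:G_s\to G_y$ with $\pi_y=\phi\circ\pi$, giving $\pi^{-1}(\pi(y))=\pi_y^{-1}(\pi_y(y))=\{y\}$. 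The main obstacle I expect is the Kakutani--Rokhlin-type refinement in the converse construction, i.e.\ ensuring that the sets $T^j(U_n)$ can be separated uniformly so that $\pi$ is well-defined and continuous, particularly when $X$ need not be totally disconnected and the $U_n$ cannot be chosen clopen; once this is in place, the rest is essentially formal.
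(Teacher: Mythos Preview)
The paper does not provide a proof of this theorem. It is stated with the attribution ``The following is an old characterization of regularly recurrent points (e.g.\ see \cite[Theorem~5.1]{Downar})'' and then quoted without argument; it is used only as a black box (in Theorem~\ref{thm:URtranistive} and in the background on Toeplitz systems). So there is no in-paper proof to compare your attempt against.

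That said, your sketch follows the standard route and is essentially correct in outline. A few remarks on the points you flagged or glossed over. First, for minimality in the converse direction your argument is on the right track but stated loosely: what you really show is that for every neighborhood $U$ of $x$ the orbit of any $y\in X$ enters $\overline{U}$, hence $x\in\overline{\orb(y)}$, and then minimality follows. Second, the Kakutani--Rokhlin separation you worry about is indeed the only real work, but it is handled without needing $X$ to be zero-dimensional: one takes $V_n=\bigcap_{j\in\Z}T^{-jp_n}U_n$, which is closed, $T^{p_n}$-invariant, contains $x$, and has the $p_n$ translates at positive mutual distance automatically after passing to smaller $U_n$; this is how Downarowicz's survey (and the older literature it cites) proceeds. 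Third, your argument that every regularly recurrent point is a singleton fiber is correct once you observe that an almost $1$-$1$ extension of a minimal equicontinuous system has that system as its maximal equicontinuous factor, so the two odometer factors you build must be isomorphic through a commuting map; this fact is standard and used implicitly in the cited reference.
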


Let $\omega$ be a regularly recurrent sequence in $\mathcal{A}^{\mathbb{Z}}$ under the left shift $\sigma$ (for a finite alphabet $\mathcal{A}$). Let $X$ be the orbit closure of $\omega$ under $\sigma$. If $\omega$ is not periodic, we call it a\textit{ Toeplitz sequence} and $(X,\sigma)$ the \textit{Toeplitz system} generated by $\omega$.

For $p\in \N$ the \textit{$p$-periodic part} of $\omega$ is $Per_p(\omega)$ the set of all positions $i\in\Z$ such that
$$
    \omega_i = \omega_{i+np} \text{ for all $n\in\Z$.}
$$
We construct $S_p(\omega) \in (\mathcal{A}\cup \{ * \})^\mathbb{Z}$, the \textit{$p$-skeleton} of $\omega$, by replacing all $\omega_i$ in $\omega$ by $*$ for $i\not\in Per_p(\omega)$.

Then $\omega$ has \textit{periodic structure} $p=(p_i)_i$ if $p$ is an increasing sequence of integers with $p_0>1$ such that
\begin{itemize}
    \item for every $i$ the $p_i$-skeleton of $\omega$ is not periodic with any period smaller then $p_i$,
    \item $p_i \ | \ p_{i+1}$ for all $i$ and
    \item for all $n \in \N$, there exists $i$ with $n\in Per_{p_i}(\omega)$.  
\end{itemize}
For more information, see \cite{Kurka2003}, \cite{Williams84}.

\begin{dfn}
    Let $\omega$ be a Toeplitz sequence with periodic structure $p$. Set
    $$
    q_i = |S_{p_i}(\omega)|_*
    $$
    as the number of occurrences of $*$ in $S_{p_i}(\omega)$.
    The sequence $\omega$ is regular Toeplitz if
    $$
    \delta(\omega) = \lim_{i \rightarrow \infty}\frac{q_i}{p_i} = 0.
    $$
\end{dfn}

\begin{thm}[\cite{JacobsKeane1969} ]\label{Thm:regularToeplitzProperties}
    If $\omega$ is a regular Toeplitz sequence, then $\overline{\mathcal{O}(\omega)}$ has zero topological entropy and is strictly ergodic. 
\end{thm}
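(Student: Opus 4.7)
The plan is to prove the two assertions separately: (1) zero topological entropy by a direct counting bound on $p_n$-blocks, and (2) strict ergodicity by exploiting the factor map onto the odometer. Throughout, write $(G,\tau)$ for the $(p_n)$-odometer and $\pi\colon X\to G$ for the natural factor map (which exists by Theorem~\ref{thm:odometers}, since $\omega$ is regularly recurrent).

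For the entropy bound, I would count blocks of length $p_n$. Any factor $u = \omega[j, j+p_n)$ appearing in $\omega$ is completely determined by two pieces of data: the residue $r = j \bmod p_n$, which selects a cyclic shift of the skeleton $S_{p_n}(\omega)$, and the letters of $u$ at the $q_n$ positions where that shifted skeleton has a $*$. Hence the number $L(p_n)$ of length-$p_n$ words appearing in any $x\in X=\overline{\mathcal{O}(\omega)}$ is at most $p_n |\mathcal{A}|^{q_n}$. Consequently
\begin{equation*}
h_{\mathrm{top}}(X,\sigma) \;\leq\; \limsup_{n\to\infty}\frac{\log L(p_n)}{p_n} \;\leq\; \limsup_{n\to\infty}\left(\frac{\log p_n}{p_n} + \frac{q_n}{p_n}\log|\mathcal{A}|\right) = 0,
\end{equation*}
using $\delta(\omega)=0$.

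For minimality, since $\omega$ is regularly recurrent by hypothesis, Theorem~\ref{thm:odometers} gives that $X=\overline{\mathcal{O}(\omega)}$ is a minimal almost $1$-$1$ extension of $(G,\tau)$. For unique ergodicity, the key point is that every $\sigma$-invariant Borel probability measure $\mu$ on $X$ is pushed by $\pi$ to the unique invariant (Haar) measure $\lambda$ on the odometer, so it suffices to show that $\pi$ is $\mu$-a.e.\ injective, because then $\mu = \pi^{-1}_*\lambda$ is uniquely determined. Define $f_n\colon X\to\{0,1\}$ by $f_n(x)=1$ iff position $0$ of $x$ is not determined at level $n$, that is, iff the shifted skeleton $S_{p_n}(\omega)$ has a $*$ at coordinate $\pi_n(x)\in\Z/p_n\Z$ (where $\pi_n$ is the projection of $\pi$ to level $n$). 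Since $f_n$ is a function of $\pi_n(x)$, and the pushforward $(\pi_n)_*\mu$ is the shift-invariant probability on $\Z/p_n\Z$, hence uniform, we get $\int f_n\,d\mu = q_n/p_n$. Because $p_n \mid p_{n+1}$ implies $Per_{p_n}(\omega)\subseteq Per_{p_{n+1}}(\omega)$, the sequence $f_n$ is monotone decreasing, and by monotone convergence its limit $f_\infty$ satisfies $\int f_\infty\,d\mu = \lim q_n/p_n = 0$. Hence $f_\infty=0$ $\mu$-a.e., and by shift-invariance the same holds at every coordinate simultaneously $\mu$-a.e. For any such $x$, every coordinate $x_i$ is pinned down by $S_{p_n}(\omega)$ and $\pi(x)$ for some $n$, so $x$ is uniquely determined by $\pi(x)$. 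This proves $\pi$ is a.e.\ injective, yielding uniqueness of $\mu$.

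The main obstacle is the last step: one must rigorously pass from "the density of $*$-positions tends to zero" to "for $\mu$-a.e.\ $x$, all coordinates of $x$ are eventually periodically determined." The mechanism above (projecting $\mu$ to $\Z/p_n\Z$, using $\tau$-invariance to conclude uniformity, then invoking monotone convergence along the nested chain of periodic-position sets) is the crucial link, and this is where the regularity assumption $\delta(\omega)=0$ enters in an essential way. Once this is in place, zero entropy and strict ergodicity both follow.
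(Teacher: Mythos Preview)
The paper does not prove this theorem; it is quoted from \cite{JacobsKeane1969} as a known result and used as background. Your proposal is a correct self-contained proof. The entropy bound via the block count $L(p_n)\le p_n|\mathcal{A}|^{q_n}$ is standard and valid (the subsequence $p_n\to\infty$ suffices since the limit $\lim_n \frac{1}{n}\log L(n)$ exists). For strict ergodicity, your argument that $\pi$ is injective off a set which is null for \emph{every} invariant measure is exactly the right mechanism; note that the set $X_0$ you construct is a fixed Borel set independent of $\mu$, so the measurable inverse $\psi=(\pi|_{X_0})^{-1}$ gives $\mu=\psi_*\lambda$ for every invariant $\mu$, forcing uniqueness. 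This is essentially the classical Jacobs--Keane argument and is in line with how the paper later invokes the result (cf.\ Remark~\ref{Rem:RegToeplitz}, where regularity is used to conclude that $(X_\omega,\sigma,\mu)$ is measure-theoretically isomorphic to its odometer factor).
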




Let $D$ be a subset of a dynamical system $(X,T,\mu)$ such that $\mu(D) > 0$. By $T_D$ we will denote the first return time map induced by $T$ on $D$:
$$
T_D(x) = T^{n(x)}(x),\qquad \text{ for }
n(x) = \min\{n \in \N : T^n(x) \in D\}.
$$

The Poincar\'e recurrence theorem ensures that $T_D$ is defined $\mu$-almost everywhere on $D$, and it preserves conditional measure 
$\mu_D$ on $D$ given by
$\mu_D(B)=\mu(B)/\mu(D)$ for all Borel sets $B\subset D$. In what follows, we will associate with a Toeplitz system a special set $\underline{D}$ and return map $\sigma_{\underline{D}}$. The aim of this construction is showing that there are Toeplitz systems with zero entropy which are still not measure-theoretically rigid.

Let $\mathbf{s} = (s_m)_{m \in \N}$ be a sequence of positive integers such that $s_m$ divides $s_{m+1}$. Denote the corresponding odometer by
$$
G_\mathbf{s} = \invlim_m \mathbb{Z}_{s_m},
$$
where the bonding maps are $f_m \; \colon \; \mathbb{Z}_{s_{m+1}} \rightarrow \mathbb{Z}_{s_m}$ with $f_m(x_{m+1}) = x_{m+1}\pmod{s_m}$.
Denote by $\tau$ the translation by unity on $G_\mathbf{s}$ (i.e., addition of $1$ at each coordinate). Let $(G_\mathbf{s},\tau,\lambda)$ be the associated measure-theoretical dynamical system, where $\lambda$ is the Haar measure on $G_\mathbf{s}$.

Fix a Toeplitz sequence $\omega$ and let $(X_\omega,\sigma,\mu)$ be the associated Toeplitz system. Then $(X_\omega,\sigma)$ is an almost 1-1 extension of $(G_\mathbf{s},\tau)$ which is its maximal equicontinuous factor (and $\mathbf{s}$ depends on $X_\omega$, in particular on $\omega$). Let $\pi \colon X_\omega\to G_\mathbf{s}$ be the associated factor map. For an odometer we have a natural map $n\mapsto \tau^n(\mathbf{1})$ where $\mathbf{1}=(1,1,1,\ldots)$. Then the topology on $G_\mathbf{s}$ induces by this map a natural topology on $\mathbb{Z}$ (similarly for $\N$). Any function $f\colon \mathbb{Z}\to \mathcal{A}$, where $\mathcal{A}$ is a finite set (an alphabet) with discrete topology, continuous with respect to this topology is called a semi-cocycle on $G_{\mathbf{s}}$. We can view $\omega: n\mapsto \omega(n)$ as a semi-cocycle (see \cite[Theorem~7.1]{Downar}). Denote by $F$
the graph of semi-cocycle $\omega$ in $G_\mathbf{s}\times \mathcal{A}$, where $\Z$ is identified with the two-sided orbit of $\mathbf{1}$ in $G_\mathbf{s}$. Denote by $D\subset G_\mathbf{s}$ the set of $\mathbf{j}\in G_\mathbf{s}$ such that the section
$\{a\in \mathcal{A} : (\mathbf{j},a)\in F\}$ is not a singleton. Note at this point that each section has always at least one point, so we can view $D$ as the set of points at which $\omega$ cannot be extended to a continuous map on $G_\mathbf{s}$. If $\lambda(D_\omega)=0$ then $(X,\sigma,\mu)$ is measure-theoretically isomorphic to its maximal equicontinuous factor $(G_\mathbf{s},\tau,\lambda)$. This is exactly the case when $\omega$ is a regular Toeplitz sequence \cite{Markley1975}. 

\begin{rem}\label{Rem:RegToeplitz}
If a Toeplitz system $(X_\omega,\sigma)$ is defined by a regular Toeplitz sequence, then its unique invariant measure $\mu$ is measure-theoretically rigid.
This does not extend onto all Toeplitz systems, since there are Toeplitz systems with positive entropy, and these are not even partially measure-theoretically rigid,
see Theorem~\ref{thm:pmtr_ze}.
\end{rem}

In what follows, we will assume that $\omega$ is an irregular Toeplitz system, i.e., $\lambda(D_\omega) > 0$.
Fix any $x\in X_\omega$ and let $\mathbf{j}=\pi(x)$. We define aperiodic part of $x$, denoted $\text{Aper}(x)\subset \Z$ by putting $n\in \text{Aper}(x)$ if and only if $\tau^n(\mathbf{j})\in D_\omega$.
Since $\omega$ is irregular, there is a set $E$ such that $\lambda(E)=1$ and if we denote $\underline{E}=\pi^{-1}(E)$ then $\text{Aper}(x)$ does not have upper nor lower bound for every $x\in \underline{E}$.
For each $\mathbf{j}\in E$ we define its aperiodic readouts as
$$
Y_\mathbf{j}=\{x|_{\text{Aper}(x)} : \pi(x)=\mathbf{j}\}.
$$
If the sets $Y_\mathbf{j}$ are the same over all $\mathbf{j}\in E$ then we say that $X$ satisfies condition (SAR) (Same Aperiodic Readouts). If we want to emphasize the unique set $Y=Y_\mathbf{j}$ then we say that (SAR) is satisfied with readouts equal to $Y$. The Williams' classical construction of a Toeplitz sequence satisfying (SAR) and with $Y$ equal to an arbitrarily preset subshift is presented in \cite[Sec.~14]{Downar}.

Let $(X, T)$ and $(Y, S)$  be dynamical systems and let $M_T(X), M_S(Y)$ be the spaces of invariant Borel probability measures for these systems, respectively.
A map $\rho \colon X \to Y$ is called a \textit{Borel* conjugacy} if:
\begin{enumerate}
\item $\rho$ is a Borel-measurable bijection,
\item $\rho \circ T= S \circ \rho$,
\item the map acting on measures (also denoted by $\rho$) defined by $\rho (\mu)(B) = \mu(\rho^{-1}(B))$ is a homeomorphism with respect to the weak* topology of measures.
\end{enumerate}
The above preparation is the main step to apply the following tool:

\begin{thm}\label{thm:sar:toplitz}\cite{Downar}
Fix any subshift $Y$.
Let $\omega$ be a Toeplitz sequence satisfying
(SAR) with readouts equal to $Y$, let $(X_\omega,\sigma)$ be the associated Toeplitz system and let $(G_\mathbf{s},\tau,\lambda)$ be its maximal equicontinuous factor. Denote by $D_\omega\subset G_\mathbf{s}$ the set of discontinuities of semi-cocycle $\omega$.
Then the Toeplitz system $(X_{\omega},\sigma)$ is Borel* conjugate to the skew product $(G_\mathbf{s} \times Y, T)$, where
\begin{equation}\label{eq:skewT}
T(\mathbf{j},y)=\begin{cases}
(\mathbf{j}+\mathbf{1},\sigma(y))& \text{if }\mathbf{j}\in D_\omega,\\
(\mathbf{j}+\mathbf{1},y)& \text{if }\mathbf{j}\not\in D_\omega.\\
\end{cases}
\end{equation}
\end{thm}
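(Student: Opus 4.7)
The plan is to define explicitly the candidate conjugacy $\rho\colon X_\omega \to G_\mathbf{s}\times Y$ using the odometer projection and the aperiodic readout, and then to verify in turn that $\rho$ is a Borel bijection (after discarding a negligible set), that it intertwines $\sigma$ and $T$, and that the induced map on invariant measures is a weak* homeomorphism.

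For $x$ in the full-measure set $\underline{E}=\pi^{-1}(E)$, I would set $\mathbf{j}=\pi(x)$ and enumerate $\text{Aper}(x)$ as $\ldots<a_{-1}(x)<a_0(x)<a_1(x)<\ldots$, using the canonical convention $a_0(x)=\min\{n\geq 0 : n\in\text{Aper}(x)\}$ (both ends are infinite thanks to $x\in\underline{E}$). Define $y_x=(x(a_k(x)))_{k\in\Z}$; by (SAR), $y_x\in Y_{\mathbf{j}}=Y$, and I would set $\rho(x)=(\mathbf{j},y_x)$. Extend $\rho$ by any Borel rule on the $\sigma$-invariant set $X_\omega\setminus\underline{E}$, which has measure zero for every $\sigma$-invariant probability measure since each such measure projects to $\lambda$ and $\lambda(E)=1$.

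Bijectivity of $\rho|_{\underline{E}}$ follows from the explicit reconstruction of $x$ from $(\mathbf{j},y)$: at periodic positions $n\notin\text{Aper}(\mathbf{j})$ the value $x(n)$ is forced by the continuous extension of the semi-cocycle $\omega$ at $\tau^n(\mathbf{j})$, while at aperiodic positions $a_k(\mathbf{j})$ one puts $x(a_k(\mathbf{j}))=y_k$. Borel measurability of $\rho$ and of $\rho^{-1}$ is routine since, at each finite depth, the maps $x\mapsto a_n(x)$ and $\mathbf{j}\mapsto a_n(\mathbf{j})$ are Borel. For equivariance, I split into two cases. If $\mathbf{j}\notin D_\omega$ then $0\notin\text{Aper}(x)$, so the enumeration for $\sigma(x)$ is the enumeration for $x$ shifted by $-1$ and the readout values are unchanged, giving $\rho(\sigma(x))=(\mathbf{j}+\mathbf{1},y_x)=T(\rho(x))$. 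If $\mathbf{j}\in D_\omega$ then $0=a_0(x)\in\text{Aper}(x)$, and applying $\sigma$ discards the zeroth entry of $y_x$ and produces $\sigma(y_x)$; both cases agree with \eqref{eq:skewT}.

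The main obstacle is the third condition: verifying that $\rho_*$ is a weak* homeomorphism between the simplices of invariant probability measures. My plan is to apply the Portmanteau theorem: the set of discontinuities of $\rho$ is contained in a $\sigma$-invariant Borel set $N\subset X_\omega$ (built from $X_\omega\setminus\underline{E}$ together with the preimage of the topological boundary of $D_\omega$), and every invariant probability measure on either side gives this set measure zero; hence for any bounded continuous $f$ on $G_\mathbf{s}\times Y$ the pullback $f\circ\rho$ is bounded and continuous off a set that is null for every invariant $\mu$, so that $\int f\,d(\rho_*\mu_n)=\int f\circ\rho\,d\mu_n$ converges to $\int f\,d(\rho_*\mu)$ whenever $\mu_n\to\mu$ weak*, and symmetrically for $\rho^{-1}$. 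The delicate part is the careful identification of the Borel image of $\rho$ and the treatment of the fibres over $D_\omega$, where the semi-cocycle is genuinely discontinuous and the readout shift is activated; these technicalities are carried out in \cite{Downar}, to which I would defer for the complete argument.
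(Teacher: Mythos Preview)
The paper does not prove this theorem at all: it is quoted verbatim as a result from \cite{Downar} and used as a black box to feed into Theorem~\ref{thm:skew} and Corollary~\ref{cor:toeplitz_non_part_rigidity}. So there is no proof in the paper to compare your attempt against.

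Your sketch is the standard construction and is essentially correct as an outline. The definition of $\rho$ via $\pi$ and the aperiodic readout, the reconstruction giving injectivity and surjectivity on $\underline{E}$, and the two-case equivariance check are all sound; in particular your indexing convention $a_0(x)=\min\{n\ge 0:n\in\text{Aper}(x)\}$ does make the shift on $Y$ fire exactly when $\mathbf{j}\in D_\omega$. The one place where your argument is genuinely incomplete is the weak* continuity of $\rho_*$ and $(\rho^{-1})_*$: the Portmanteau-style claim that ``$f\circ\rho$ is continuous off a set that is null for every invariant measure'' requires you to identify the discontinuity set of $\rho$ precisely and show it is invariant and universally null, and the functions $x\mapsto a_k(x)$ are typically not continuous where the boundary of $D_\omega$ is approached. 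You are right to defer this to \cite{Downar}, but be aware that it is the only nontrivial part of the proof, not a routine technicality.
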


Now we can prove the following theorem:

\begin{thm}\label{thm:skew}
Fix any measure-theoretically strong mixing subshift $(Y,\sigma,\nu)$ and let $D\subset G_\mathbf{s}$ be a set such that $\lambda(D)>0$,
where $\lambda$ is the Haar measure.
Then the skew product $(G_\mathbf{s} \times Y, T,\lambda\times \nu)$, where
$$
T(\mathbf{j},y)=\begin{cases}
(\mathbf{j}+\mathbf{1},\sigma(y))& \text{if }\mathbf{j}\in D,\\
(\mathbf{j}+\mathbf{1},y)& \text{if }\mathbf{j}\not\in D\\
\end{cases}
$$
is not partially measure-theoretically rigid.
\end{thm}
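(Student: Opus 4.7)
The plan is to test partial rigidity against cylinder sets in the $Y$-coordinate, using strong mixing of $(Y,\sigma,\nu)$ to drive the intersections $T^{-s_n}(A)\cap A$ below the required threshold. The first step is to write the iterates of $T$ explicitly. Since the $y$-coordinate advances by $\sigma$ precisely when the odometer coordinate lies in $D$, an easy induction gives
$$
T^n(\mathbf{j},y) = \bigl(\tau^n(\mathbf{j}),\sigma^{k_n(\mathbf{j})}(y)\bigr),\qquad k_n(\mathbf{j}) := \#\{\,0\le i<n:\tau^i(\mathbf{j})\in D\,\}.
$$
Because $(G_\mathbf{s},\tau,\lambda)$ is uniquely ergodic, Birkhoff's ergodic theorem applied to $\mathbf{1}_D\in L^1(\lambda)$ yields $k_n(\mathbf{j})/n\to\lambda(D)>0$ for $\lambda$-a.e.\ $\mathbf{j}$, and hence $k_{s_n}(\mathbf{j})\to\infty$ $\lambda$-almost surely for any sequence $s_n\to\infty$.

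Next, I would argue by contradiction. Suppose $T$ is partially rigid with constant $\alpha>0$ along some sequence $s_n\to\infty$. Pick a cylinder $[w]\subset Y$ with $0<\nu([w])<\alpha$ (such $[w]$ exists: strong mixing rules out a Dirac $\nu$ in the nontrivial case, so $\nu$ is non-atomic and cylinders of arbitrarily small positive measure are available), and set $A := G_\mathbf{s}\times[w]$. Unfolding the definition of $T^{-s_n}$ and applying Fubini gives
$$
(\lambda\times\nu)\bigl(T^{-s_n}(A)\cap A\bigr) = \int_{G_\mathbf{s}} \nu\!\left([w]\cap\sigma^{-k_{s_n}(\mathbf{j})}([w])\right) d\lambda(\mathbf{j}).
$$
The integrand is dominated by the constant $\nu([w])$, and by strong mixing of $(Y,\sigma,\nu)$ together with $k_{s_n}(\mathbf{j})\to\infty$ for $\lambda$-a.e.\ $\mathbf{j}$, it converges $\lambda$-a.e.\ to $\nu([w])^2$. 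Dominated convergence therefore gives
$$
\lim_{n\to\infty}(\lambda\times\nu)\bigl(T^{-s_n}(A)\cap A\bigr) = \nu([w])^2 < \alpha\,\nu([w]) = \alpha\,(\lambda\times\nu)(A),
$$
which contradicts the partial rigidity inequality \eqref{Eq:part-rigid-def} applied to $A$.

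The principal point is the coupling between the two factors: strong mixing in $Y$ only helps if the cocycle $k_{s_n}(\mathbf{j})$ really diverges on a set of full $\lambda$-measure, and this is exactly what unique ergodicity of the odometer together with $\lambda(D)>0$ supplies via Birkhoff. Everything else is a direct Fubini plus dominated convergence computation, so I do not expect any real obstacle beyond setting up the cocycle identity cleanly.
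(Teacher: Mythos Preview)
Your argument is correct and follows essentially the same strategy as the paper: test partial rigidity against a product set built from a small $Y$-cylinder, use the cocycle formula $T^n(\mathbf{j},y)=(\tau^n\mathbf{j},\sigma^{k_n(\mathbf{j})}y)$, and exploit strong mixing together with $k_n(\mathbf{j})\to\infty$ almost surely. The paper uses the test set $D\times A$ and an explicit finite decomposition $D=\bigcup_i D^m_i$ by visit count to obtain a quantitative bound for all large $m$, whereas you use the simpler test set $G_\mathbf{s}\times[w]$ and pass to the limit via Fubini and dominated convergence; your packaging is a bit cleaner, the paper's a bit more explicit, but the content is the same (one small remark: you only need ergodicity of $\lambda$ for Birkhoff, not unique ergodicity).
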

\begin{proof}
Fix any $\alpha\in (0,1]$ and
fix any cylinder set $A\subset Y$ such that $\nu(A)<\alpha/4$. Fix any $\eps<\nu(A)^2$ and let $N_\eps>0$ be such that $\nu(\sigma^n(A)\cap A)<\nu(A)^2+\eps$ for each $n\geq N_\eps$.

For each $x\in X$ and $m \in \N$, let
$r_m(x)=|\{0\leq j\leq m : \tau^j(x)\in D\}|$
be the number of visits of $x$ in $D$ in $m$ iterations.
Denote $D^m_i=\{x\in D : r_m(x)=i\}$ and note that
$D=\bigcup_{i=1}^m D^m_i.$
Take $M$ so large that
$$
\lambda\left(\bigcup_{i=1}^{N_\eps-1}D^m_i\right)<\eps{ \lambda(D)}
$$
for every $m\geq M$.
Note that for each $0\leq i\leq m$ we have
$$
T^m(D^m_i\times A)=\tau^m(D^m_i)\times \sigma^i(A).
$$
But then
\begin{eqnarray*}
\lambda\times \nu(T^m(D\times A)\cap (D\times A))&\leq&
\sum_{i=1}^{m}\lambda(\tau^m(D^m_i)\cap D)\nu (\sigma^i(A)\cap A)\\
&\leq& \sum_{i=1}^{N_\eps-1}\lambda(D_i^m)
+\sum_{i=N_\eps}^m \lambda(D_i^m)(\nu(A)^2+\eps)\\
&\leq & \lambda(D)(\nu(A)^2+2\eps)\leq 
3\lambda(D)\nu(A)^2
<\alpha (\lambda\times \nu) (D\times A).
\end{eqnarray*}
This implies that indeed $T$ is not partially measure-theoretically rigid.
\end{proof}

In this proof, the Toeplitz shift appears as Kakutani tower over a mixing base map.
Such a system would be mixing, if the return times had greatest common divisor $1$, which in this case is not true. Therefore, the mixing of the Kakutani tower fails, but the mixing of the base is still powerful enough to prevent partial rigidity.

\begin{cor}\label{cor:toeplitz_non_part_rigidity}
There is a Toeplitz system $(X,\sigma)$ with zero entropy which is not partially measure-theoretically rigid with respect to any of its invariant measures.
\end{cor}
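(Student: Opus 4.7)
My plan is to realize the desired Toeplitz system via Williams' (SAR) construction, so that Theorems~\ref{thm:sar:toplitz} and~\ref{thm:skew} combine to deliver the conclusion. First I would fix a symbolic subshift $(Y,\sigma,\nu)$ that is uniquely ergodic, has zero topological entropy, and whose unique invariant measure $\nu$ is measure-theoretically strongly mixing---for instance a symbolic realization of one of the classical rank-one mixing constructions of Ornstein or Adams. Using Williams' construction (see \cite[Sec.~14]{Downar}), I would then produce an irregular Toeplitz sequence $\omega$ satisfying (SAR) with readouts equal to $Y$, arranging the parameters so that the set $D_\omega\subset G_\mathbf{s}$ of discontinuities of the associated semi-cocycle has $\lambda(D_\omega)>0$.

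The topological entropy of $(X_\omega,\sigma)$ is zero by a direct complexity count: by (SAR), each length-$p_i$ word of $X_\omega$ is determined by a shift of the $p_i$-skeleton together with a length-$q_i$ word from $Y$, so $|\mathcal{L}_{p_i}(X_\omega)|\le p_i\cdot|\mathcal{L}_{q_i}(Y)|$, which grows subexponentially because $h_{\text{top}}(Y)=0$. Theorem~\ref{thm:sar:toplitz} provides a Borel* conjugacy between $(X_\omega,\sigma)$ and the skew product $(G_\mathbf{s}\times Y,T)$ of~\eqref{eq:skewT}, and Theorem~\ref{thm:skew}, applied with the strongly mixing $(Y,\sigma,\nu)$ and $D=D_\omega$, shows that this skew product fails partial measure-theoretic rigidity with respect to $\lambda\times\nu$. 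Since a Borel* conjugacy is a homeomorphism of the invariant-measure simplices that intertwines the dynamics, partial rigidity is preserved, and the image of $\lambda\times\nu$ is a $\sigma$-invariant measure on $X_\omega$ that is not partially rigid.

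The main obstacle, and the step I would treat most carefully, is to ensure that this measure exhausts the invariant simplex of $(X_\omega,\sigma)$, i.e.\ that $X_\omega$ is uniquely ergodic. I would proceed by passing to the first-return map of $T$ on $D_\omega\times Y$: since each point of $D_\omega\times Y$ visits $D_\omega$ exactly once (at time zero) before its first return, the return map factors as the direct product $\tau_{D_\omega}\times\sigma$, and $(\tau_{D_\omega},\lambda|_{D_\omega}/\lambda(D_\omega))$ is uniquely ergodic by Abramov's theorem applied to the uniquely ergodic $\tau$ on $G_\mathbf{s}$. Unique ergodicity of $\tau_{D_\omega}\times\sigma$ then reduces to disjointness of its two uniquely ergodic factors, which holds because $(\sigma,\nu)$ is mildly mixing (a consequence of being strongly mixing) and is therefore disjoint from every measure-theoretically rigid system, while $\tau_{D_\omega}$, as an induced map of the topologically rigid odometer rotation $\tau$ on $G_\mathbf{s}$, inherits measure-theoretic rigidity. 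Applying Abramov's theorem in reverse transports unique ergodicity from $\tau_{D_\omega}\times\sigma$ back to $T$, and hence via Theorem~\ref{thm:sar:toplitz} to $X_\omega$, so that $(X_\omega,\sigma)$ carries a single invariant measure---the one shown above to be not partially rigid---which is precisely what the corollary asserts.
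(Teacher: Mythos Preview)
Your overall strategy matches the paper's: take a uniquely ergodic zero-entropy subshift $(Y,\sigma,\nu)$ whose measure is strongly mixing, feed it into Williams' (SAR) construction, apply Theorems~\ref{thm:sar:toplitz} and~\ref{thm:skew}, and then show the skew product $(G_\mathbf{s}\times Y,T)$ is uniquely ergodic by analyzing the first-return map $T_{D_\omega\times Y}=\tau_{D_\omega}\times\sigma$ via disjointness. The entropy count and the identification of the return map are fine.

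There is, however, a genuine gap in your unique-ergodicity step. You assert that $\tau_{D_\omega}$ ``inherits measure-theoretic rigidity'' because it is an induced map of the rigid rotation $\tau$. Rigidity is \emph{not} preserved under inducing in general: in the zero-entropy loosely Bernoulli class (to which every odometer belongs) one finds mixing systems as well, and Kakutani-equivalent systems arise from one another by inducing and building towers, so an induced map of a rigid transformation need not be rigid. Thus the disjointness you invoke (mildly mixing $\perp$ rigid) is left unjustified. The paper closes this gap not by a general inheritance principle but by a specific feature of Williams' construction: there $D_\omega$ is built so that $\tau_{D_\omega}$ is itself conjugate to an odometer $G_{\mathbf{s}'}$, whence disjointness from the strongly mixing $(Y,\sigma,\nu)$ follows directly (mixing $\perp$ discrete spectrum). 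Once you replace your inheritance claim by this fact about Williams' construction, your argument goes through and coincides with the paper's.

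Two minor points. What you call ``Abramov's theorem'' is the entropy formula; the bijection between $T$-invariant measures and $T_A$-invariant measures (for $A$ of positive measure meeting every orbit) is the elementary Kac/Kakutani correspondence. Also, you need not argue topological unique ergodicity of $\tau_{D_\omega}$ separately: once $\tau_{D_\omega}$ is identified as an odometer, any $T_{D_\omega\times Y}$-invariant probability measure is a joining of Haar measure on $G_{\mathbf{s}'}$ with $\nu$, and disjointness forces it to be the product --- exactly as the paper argues.
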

\begin{proof}
Fix any strong mixing measure $\mu$ of zero entropy. By the Jewett-Krieger theorem there exists a uniquely ergodic subshift $(Y,\sigma)$ with its unique measure isomorphic to $\mu$.
Williams' Construction
provides a Toeplitz sequence satisfying (SAR)
and with aperiodic readouts equal to $Y$. Then by Theorem~\ref{thm:sar:toplitz}
we obtain a Toeplitz system $(X,\sigma)$ which is Borel* conjugate to the skew product $(G_\mathbf{s} \times Y, T)$ given by \eqref{eq:skewT}. By Theorem~\ref{thm:skew} we obtain that $(G_\mathbf{s} \times Y, T,\lambda\times \nu)$
is not partially measure-theoretically rigid.

Next, let $\mu$ be any other invariant measure of $T$. Then induced measure $\mu_{D\times Y}$ 
is invariant for
$T_D=\tau_D\times \sigma$. But Williams' construction ensures that $\tau_D$ is conjugate to an odometer $G_{\mathbf{s}'}$ on some scale $\mathbf{s}'$. Since $(Y,\sigma,\nu)$ is strongly mixing, it is disjoint from $(G_{\mathbf{s}'},\tau,\lambda)$, and therefore $\mu_{D\times Y}=\lambda_D\times \nu$ since it is joining of $\lambda_D$ and $\nu$. But $\mu_{D\times Y}=(\lambda\times \nu)_{D\times Y}$ so we must have $\mu=\lambda\times \nu$.
\end{proof}

\medskip
\section{Rigidity on Bratteli diagrams}\label{Sect:ext_from_odom}

In this section we investigate how the structure of a Bratteli diagram representing a Cantor dynamical system can show rigidity. We use the diagram to control ergodic measures, specifically if the measure is the finite extension of an odometer measure. Further we use this result to study Toeplitz systems by their Bratteli-Vershik representation and show rigidity in examples with different ergodic measures.

\begin{dfn}\label{Def:consec_order}
An order on a Bratteli diagram is called \textit{consecutive} if whenever we have edges $e,f,g$ such that $e \leq f \leq g$ and $s(e) = s(g)$ then $s(f) = s(e) = s(g)$.
We say that a subdiagram $\ov B = (\ov W, \ov E)$ of $B = (V,E,>)$ is \textit{consecutively ordered} if for any $n \geq 0$ and $e\leq g \in \ov E$ with $s(e) = s(g)$, if there exists an edge $f \in E$ with $e\leq f\leq g$ in the ordering of the full diagram $B$, then $s(f) = s(e) = s(g)$.
\end{dfn}

Figure~\ref{Fig:ConsecutiveOrdering} (left) illustrates the notion of a consecutive order. For more details, see e.g. \cite{Durand2010}. 
One of the examples of a consecutive order is a left-to-right order, when for every vertex all incoming edges are enumerated from left to right as they appear in the diagram. If $B$ has a consecutive order, then any of its vertex subdiagrams is consecutively ordered. The stationary diagram in Figure~\ref{Fig:ConsecutiveOrdering} (right) is not consecutively ordered, but for instance the subdiagram which is a vertical odometer passing through the second vertex on each level, is consecutively ordered.  

\begin{figure}
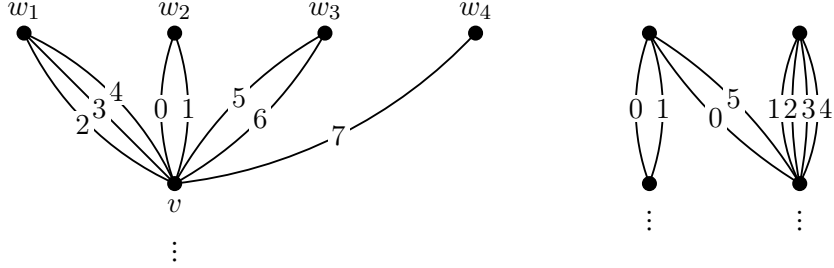

\begin{minipage}[l]{0.45\textwidth}
\unitlength=1cm
\begin{graph}(8,4)
 \roundnode{V10}(1,3) 
 \roundnode{V11}(3,3)
 \roundnode{V12}(5,3)
 \roundnode{V13}(7,3)
 \nodetext{V10}(0,0.3){$w_1$}
 \nodetext{V11}(0,0.3){$w_2$}
 \nodetext{V12}(0,0.3){$w_3$}
 \nodetext{V13}(0,0.3){$w_4$}
 
 \roundnode{V21}(3,1)
\nodetext{V21}(0,-0.3){$v$}
  
 %
 \graphlinewidth{0.025}

 \bow{V21}{V10}{0.09}
  \bow{V21}{V10}{-0.09}
  \bowtext{V21}{V10}{0.11}{2}
  \bowtext{V21}{V10}{-0.11}{4}
  \edge{V21}{V10}
  \edgetext{V21}{V10}{3}
  
 \bow{V21}{V11}{0.09}
  \bow{V21}{V11}{-0.09}
  
  \bowtext{V21}{V11}{0.09}{0}
  \bowtext{V21}{V11}{-0.09}{1}

 \bow{V21}{V12}{0.07}
  \bow{V21}{V12}{-0.07}
  
  \bowtext{V21}{V12}{0.07}{5}
  \bowtext{V21}{V12}{-0.07}{6}

    \bow{V21}{V13}{-0.09}
    \bowtext{V21}{V13}{-0.09}{7}

\freetext(3,0.1){\vdots}
\end{graph}
\end{minipage}
\quad
\begin{minipage}[l]{0.45\textwidth}
\unitlength=1cm
\begin{graph}(5,4)
 \roundnode{V11}(2,3)
 \roundnode{V12}(4,3)
 
 \roundnode{V21}(2,1)
 \roundnode{V22}(4,1)
  
 %
 \graphlinewidth{0.025}

 \bow{V21}{V11}{0.09}
  \bow{V21}{V11}{-0.09}
  \bowtext{V21}{V11}{0.09}{0}
  \bowtext{V21}{V11}{-0.09}{1}
    \bow{V22}{V11}{0.06}
    \bow{V22}{V11}{-0.06}
    
 \bowtext{V22}{V11}{0.06}{0}
 \bowtext{V22}{V11}{-0.06}{5}
     
 \bow{V22}{V12}{0.05}
 \bow{V22}{V12}{-0.05}
  \bow{V22}{V12}{0.12}
 \bow{V22}{V12}{-0.12}

  \bowtext{V22}{V12}{0.17}{1}
 \bowtext{V22}{V12}{-0.17}{4}
   \bowtext{V22}{V12}{0.06}{2}
 \bowtext{V22}{V12}{-0.06}{3}
 
\freetext(2,0.5){$\vdots$}
\freetext(4,0.5){$\vdots$}
\end{graph}
\end{minipage}
\caption{Example of a consecutive order (left) 
and a non-consecutively ordered diagram (right) with consecutively ordered subdiagrams
}\label{Fig:ConsecutiveOrdering}
\end{figure}

\begin{rem}
     In general, for a Bratteli diagram of rank bigger than one, the consecutive ordering is not preserved under telescoping.
\end{rem}

A substitution $\theta : A \to (A')^+$ is called \textit{proper} if all words $\theta(a)$ over $a\in A$ start with the same letter and end with the same letter (the starting letter and the ending letter can be different). A substitution $\theta$ is called \textit{left proper} if only the the starting letter in each $\theta(a)$ is the same, and \textit{right proper} if the ending letter is the same. 
Assume that $\theta$ is left proper and $\theta(a) = lu(a)$ for all $a \in A$. Then a substitution $\chi \; \colon A \rightarrow (A')^+$ defined by $\chi(a) = u(a)l$ is \textit {(left) conjugate} of $\theta$ and it is right proper.

If $B$ is an ordered Bratteli diagram, one can read a substitution $\theta_n : V_n \rightarrow V_{n-1}^+$ on $B$ at a level $n \geq 2$ as follows. Let $v \in V_{n}$ and $e_1 < \ldots < e_k$ be the incoming edges to $v$. Then set $\theta_n(v) = w_1 \ldots w_k$, where $w_i = s(e_i)$ for $i = 1, \ldots, k$. For instance, a substitution read from Figure~\ref{Fig:ConsecutiveOrdering} is $\theta(v) = w_2w_2w_1w_1w_1w_1w_3w_3w_4$. Clearly, if diagram $B$ is stationary then $\theta_n = \theta$ for $n \geq 2$.  


In \cite{DurandLeroy2012}, Bratteli-Vershik representation of $S$-adic shifts is given. 
Let $(A_n)_n$ be a sequence of non-empty finite sets (alphabets) and $A_n^+$ be a set of all finite non-empty words over alphabet $A_n$.
Recall that an \textit{$S$-adic representation} of a subshift $(X, \sigma)$ is a sequence $(\theta_n, a_n)_{n \geq2}$, where $(\theta_n \; \colon A_n \rightarrow A_{n-1}^+)_n$ are substitutions on  $A_n$, $a_n \in A_n$ and $X \subset A_1^{\mathbb{Z}}$ is a set of sequences $x = (x_i)$ such that all words $x_i x_{i+1}\ldots x_j$ appear in some $\theta_2 \theta_3 \ldots \theta_n(a_n)$. 
Denote by $(X_n, \sigma)$ the subshift generated by $(\theta_k, a_k)_{k \geq n}$.

\begin{prop}\cite{DurandLeroy2012}\label{Thm:S-adic_Rep_BD}
    Let $(X,T)$ be the minimal $S$-adic subshift defined by a sequence of substitutions $(\theta_n \; \colon A_n \rightarrow A_{n-1}^+,a_n)_n$, where all $\theta_n$ are proper. Suppose that for all $n$ the substitutions $\theta_n$ extend by concatenation to a one-to-one map from $X_n$ to $X_{n-1}$. Then $(X,T)$ is conjugate to $(X_B, \varphi_B)$, where $B$ is the Bratteli diagram such that for all $n \geq 2$ the substitution read on $B$ at level $n$ is $\theta_n$.
\end{prop}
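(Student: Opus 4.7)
The plan is to build the Bratteli diagram $B$ directly from the data $(\theta_n, a_n)_n$, exhibit a continuous map $\Phi \colon X_B \to X$ that intertwines $\varphi_B$ with $\sigma$, and check that $\Phi$ is a bijection. First, I would set $V_0 = \{v_0\}$ and $V_n = A_n$ for $n \geq 1$, placing a single edge from $v_0$ to each $a \in A_1$ (the simple hat), and for $n \geq 2$ and $v \in V_n$ introducing $|\theta_n(v)|$ edges with target $v$ whose sources, read in the order on $t^{-1}(v)$, spell out $\theta_n(v) \in A_{n-1}^+$. By construction the substitution read at level $n$ is $\theta_n$. Since each $\theta_n$ is proper, the first and last letters of $\theta_n(v)$ are constant in $v$, forcing a unique minimal path and a unique maximal path in $X_B$, so $\varphi_B$ extends to a homeomorphism of $X_B$.

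Second, I would define $\Phi$ level-by-level. For $x = (x_n)_n \in X_B$ and $n \geq 2$, let $v_n = t(x_n)$, let $p_n \in \{0, \ldots, h_n(v_n)-1\}$ be the position of the prefix $(x_1, \ldots, x_n)$ among the ordered paths from $v_0$ to $v_n$, and let $w_n := \theta_2\theta_3\cdots\theta_n(v_n) \in A_1^+$; its length equals $h_n(v_n)$. I declare that $w_n$ occupies the positions $-p_n, \ldots, -p_n + h_n(v_n) - 1$ in $\Phi(x)$. Consistency across levels is immediate, because the way $(x_1, \ldots, x_n)$ embeds inside $(x_1, \ldots, x_{n+1})$ matches the way $\theta_{n+1}(v_{n+1})$ expands under $\theta_2 \cdots \theta_n$ and thus locates $w_n$ inside $w_{n+1}$. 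For paths that are neither minimal nor maximal, both $p_n$ and $h_n(v_n) - p_n$ tend to infinity, so $\Phi(x)$ is fully determined on $\mathbb{Z}$; on the distinguished minimal and maximal paths, properness ensures that the corresponding one-sided limits define a fixed sequence of the substitutive action, and the other side is filled in by shift-invariance of the image. Continuity of $\Phi$ is clear from the cylinder topology on $X_B$.

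Third, I would verify that $\Phi \circ \varphi_B = \sigma \circ \Phi$. The Vershik successor of a non-maximal $x$ increments $p_n$ by one at the first level at which $x_n$ is non-maximal and resets the lower levels to their minima; in terms of the window position of $w_n$, this moves the origin one step to the left, which is exactly the action of $\sigma$. The image $\Phi(X_B)$ is closed, shift-invariant, and contains every factor $\theta_2\theta_3\cdots\theta_n(a_n)$, so $\Phi(X_B) \subseteq X$, and by minimality of $X$ equality holds.

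The step I expect to be the main obstacle is injectivity, and this is where the one-to-one hypothesis on the concatenated extension of each $\theta_n$ is used. Suppose $\Phi(x) = \Phi(x') = y \in X = X_1$. Because $\theta_2 \colon X_2 \to X_1$ is one-to-one, $y$ determines a unique pair $(\bar y_2, k_2)$ with $\bar y_2 \in X_2$, $\theta_2(\bar y_2) = \sigma^{k_2}(y)$ and $0 \leq k_2 < |\theta_2(\bar y_2(0))|$; this pair is precisely $(v_2, p_2)$ for any path whose $\Phi$-image equals $y$, so $x_2 = x'_2$. Iterating this desubstitution at every level forces $x_n = x'_n$ for all $n$ and hence $x = x'$. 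Since $X_B$ is compact and $\Phi$ is a continuous bijection onto $X$, its inverse is automatically continuous, yielding the desired topological conjugacy.
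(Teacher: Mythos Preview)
The paper does not give its own proof of this proposition: it is quoted verbatim from \cite{DurandLeroy2012} and used as a black box. So there is no argument in the paper to compare against, and your sketch is essentially a reconstruction of the standard proof from that reference. The construction of $B$, the description of $\Phi$ via tower positions and iterated substitution words, the check that $\Phi\circ\varphi_B=\sigma\circ\Phi$, and the surjectivity via minimality are all correct and exactly what one finds in the source.

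One point deserves more care in your injectivity step. You write that because $\theta_2\colon X_2\to X_1$ is one-to-one, a point $y\in X_1$ determines a \emph{unique} pair $(\bar y_2,k_2)$ with $\theta_2(\bar y_2)=\sigma^{k_2}(y)$ and $0\le k_2<|\theta_2(\bar y_2(0))|$. Injectivity of $\theta_2$ on $X_2$ says only that $\theta_2(z)=\theta_2(z')$ forces $z=z'$; it does not immediately rule out $\theta_2(z)=\sigma^{j}\theta_2(z')$ for some $0<j<|\theta_2(z'_0)|$, which is what uniqueness of the cutting (recognizability) requires. In the aperiodic minimal setting the two notions are equivalent, but this equivalence is itself a theorem (in the spirit of Moss\'e, and proved in the $S$-adic generality in the literature around \cite{DurandLeroy2012}); you should either invoke it explicitly or supply the short argument. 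Once that is in place, your iterated desubstitution argument goes through and the proof is complete.
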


Moreover, the following result holds:
\begin{cor}\cite{DurandLeroy2012}\label{Cor:S-adic_BD}
  Let $(X,T)$ be a minimal $S$-adic subshift defined by $(\theta_n, a_n)_{n \geq 2}$, where $\theta_n$ are left or right proper.  Suppose that for all $n$ the substitutions $\theta_n$ extend by concatenation to one-to-one maps from $X_n$ to $X_{n-1}$. Then $(X,T)$ is conjugate to $(X_B, \varphi_B)$, where $B$ is a Bratteli diagram such that for all $n \geq 2$

  (i) the substitution read on $E_{2n}$ is left proper and equal to $\theta_{2n}$ or its conjugate;

  (ii) the substitution read on $E_{2n+1}$ is right proper and equal to $\theta_{2n+1}$ or its conjugate.
\end{cor}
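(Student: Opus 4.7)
The plan is to reduce Corollary~\ref{Cor:S-adic_BD} to Proposition~\ref{Thm:S-adic_Rep_BD} by grouping consecutive substitutions into proper compositions, applying the proposition to the composed sequence, and then ``de-telescoping'' the resulting Bratteli diagram so that each original substitution is read on its own level.

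The key tool is the standard conjugation trick: if $\theta \colon A \to (A')^+$ is left proper with $\theta(a) = l \cdot u(a)$ for a common prefix letter $l$, then the \emph{conjugate} $\chi(a) = u(a) \cdot l$ is right proper, and the minimal $S$-adic subshifts obtained by replacing $\theta$ with $\chi$ at the same position in the sequence are topologically conjugate (the conjugacy amounts to a shift of the ``reading window''). Using this freedom, I would first adjust the sequence $(\theta_n)$ so that each $\theta_{2n}$ is left proper and each $\theta_{2n+1}$ is right proper, at the cost of replacing some $\theta_n$ with its conjugate. Then the composition $\eta_n := \theta_{2n} \circ \theta_{2n+1}$ is automatically proper: its images start with the common first letter of $\theta_{2n}$, and their last letter is the final letter of $\theta_{2n}(r)$, where $r$ is the common final letter of $\theta_{2n+1}$, hence fixed.

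The sequence $(\eta_n, a_n')_{n \geq 1}$ (with suitable anchor letters carried over from $(a_n)_n$) defines the same $S$-adic subshift $(X,T)$ up to conjugacy. Proposition~\ref{Thm:S-adic_Rep_BD} then produces an ordered Bratteli diagram $B'$ such that $(X,T)$ is conjugate to $(X_{B'},\varphi_{B'})$ and the substitution read on level $n$ of $B'$ is exactly $\eta_n$. Since each $\eta_n$ is the composition of two substitutions, I would de-telescope $B'$ by inserting, between levels $n-1$ and $n$, an intermediate level with vertex set $A_{2n}$, where the incoming edges are governed by $\theta_{2n+1}$ (with its right-proper ordering) and the outgoing edges by $\theta_{2n}$ (with its left-proper ordering). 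The resulting diagram $B$ has, on level $2n$, the substitution $\theta_{2n}$ (or its conjugate, depending on the adjustment in the previous paragraph) and, on level $2n+1$, the substitution $\theta_{2n+1}$ (or its conjugate), as required.

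The main technical obstacle is the conjugation step: one must verify that replacing $\theta_n$ by its conjugate preserves the minimal subshift up to topological conjugacy, and that the hypothesis of injective extension to $X_n \to X_{n-1}$ is also preserved. A second, subtler point is that after de-telescoping, the new diagram $B$ must still admit a Vershik homeomorphism conjugate to $\varphi_{B'}$; here the alternating left/right proper structure is exactly what guarantees a unique minimal and a unique maximal infinite path in $B$, so that the Vershik map extends to all of $X_B$ and the conjugacy with $(X,T)$ passes through the de-telescoping.
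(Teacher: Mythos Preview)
The paper does not supply its own proof of this corollary: it is quoted directly from \cite{DurandLeroy2012} and used as a black box, so there is nothing in the present paper to compare your argument against line by line.

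That said, your outline is the standard one and matches the argument in \cite{DurandLeroy2012}. The only point where your sketch is slightly imprecise is the ordering of the two steps. Rather than first passing to conjugates and then composing, the cleaner route (and the one taken in the original reference) is: for each $n$, if $\theta_{2n}$ is already left proper keep it, otherwise replace it by its conjugate (which is left proper because the original was right proper); do the symmetric thing for $\theta_{2n+1}$; now the composite $\theta_{2n}\circ\theta_{2n+1}$ is proper, Proposition~\ref{Thm:S-adic_Rep_BD} applies to the telescoped sequence, and de-telescoping recovers $B$. Your identification of the two verification points---that conjugation preserves the subshift up to topological conjugacy and preserves injectivity of the extension $X_n\to X_{n-1}$, and that the alternating left/right proper structure yields unique extremal paths after de-telescoping---is exactly right; these are the only places where any real work happens.
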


\subsection{Measure extension from a vertex subdiagram}
Let $\overline W = \{W_n\}_{n>0}$ be a sequence of proper, non-empty subsets $W_n \subset V_n$ and $W_0 = \{v_0\}$. The (vertex) subdiagram $\overline B =  (\ov W, \ov E)$ is a Bratteli diagram defined by the vertices $\ov W = \bigsqcup_{i\geq 0} W_n$ and all the edges $\ov E$ that have both their sources and ranges in $\ov W$. Consider the set $X_{\ov B}$ of all infinite paths whose edges belong to $\ov B$. Let $\widehat X_{\ov B}$ be the subset of paths in $X_B$ that are tail equivalent to paths from $X_{\ov B}$. Let $\ov \mu$ be a probability measure on $X_{\ov B}$ invariant with respect to the tail equivalence relation defined on $\ov B$. Then $\ov \mu$ can be canonically extended to the measure $\widehat {\ov \mu}$ on the space $\widehat X_{\ov B}$ by invariance with respect to $\mathcal R$ (see e.g. \cite{BezuglyiKwiatkowskiMedynetsSolomyak2013}, \cite{BezuglyiKarpelKwiatkowski2015}, \cite{AdamskaBezuglyiKarpelKwiatkowski2017}). 
Specifically, for $w \in W_n$, take a finite path $\ov e \in \ov E(v_0, w)$ which lies in $\ov B$. For any finite path $\ov f \in E(v_0, w)$ from the diagram $B$ with the same range $w$, we set $\widehat{\ov \mu} ([\ov f])  = \ov \mu([\ov e])$. 
To extend $\widehat{\ov \mu}$ to an $\mathcal{R}$-invariant measure on the whole space $X_{B}$, set $\widehat {\ov \mu} (X_B \setminus \widehat{X}_{\ov B}) = 0$. 

Set
$$
\wh X_{\ov B}^{(n)} = \{x = (x_i)\in \wh X_{\ov B} : t(x_i) \in W_i, \ \forall i \geq n\}.
$$
Then $\wh X_{\ov B}^{(n)} \subset \wh X_{\ov B}^{(n+1)}$ and

\begin{equation}\label{Eq:meas_extension}
\widehat{\ov\mu}(\wh X_{\ov B}) = \lim_{n\to\infty} \widehat{\ov\mu}(\wh X_{\ov B}^{(n)}).
\end{equation}

In the case of stationary Bratteli diagrams the unique invariant measure can be computed directly. Let $B$ be a stationary Bratteli diagram with the matrix $A$ transpose to the incidence matrix $F$. A real number $\lambda > 1$ is called a \textit{distinguished eigenvalue} for $A$ if there exists a non-negative vector $\bx$ such that $A\bx = \lambda \bx$. In \cite{BezuglyiKwiatkowskiMedynetsSolomyak2010} it was shown that all ergodic $\mathcal{R}$-invariant probability measures for $X_B$ correspond to distinguished eigenvalues of $A$. Moreover, for every $n \in \mathbb{N}$ and $i \in V_n$, the measure $\mu$ corresponding to the pair $(\bx = (x_i), \lambda)$ takes value 
$$
\mu([\ov e]) = \frac{x_i}{\lambda^{n-1}}
$$
on a cylinder set $[\ov e]$ which ends at a vertex $i \in V_n$.

\subsection{Measures supported on odometers}

\begin{prop}\label{Prop:meas_ext_towers}
    Let $B=(V,E)$ be a Bratteli diagram of arbitrary rank and $\mu$ be a probability invariant measure on $B$ which is an extension of a subdiagram $\ov B = (\ov W,\ov E)$. Then
    \begin{equation}\label{Eq:meas-ext-heights}
    \lim_{n \rightarrow \infty} \sum_{w \in W_n} \mu(X_n{(w)}) = 1.
    \end{equation}
\end{prop}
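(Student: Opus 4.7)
The plan is to leverage the definition of the extended measure together with equation~\eqref{Eq:meas_extension} and a straightforward inclusion of sets.

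First I would observe that, by the construction recalled just before the proposition, the measure $\mu$ is given as $\widehat{\ov\mu}$, where $\widehat{\ov\mu}(X_B \setminus \wh X_{\ov B}) = 0$. Since $\mu$ is a probability measure, this forces $\widehat{\ov\mu}(\wh X_{\ov B}) = 1$. Applying~\eqref{Eq:meas_extension}, we obtain
\begin{equation*}
\lim_{n\to\infty}\widehat{\ov\mu}\bigl(\wh X_{\ov B}^{(n)}\bigr) = \widehat{\ov\mu}(\wh X_{\ov B}) = 1.
\end{equation*}

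Next I would compare $\wh X_{\ov B}^{(n)}$ with the union of towers through $W_n$. By definition, every $x = (x_i) \in \wh X_{\ov B}^{(n)}$ satisfies $t(x_i) \in W_i$ for all $i \geq n$; in particular $t(x_n) \in W_n$, which means $x \in X_n(w)$ for $w = t(x_n) \in W_n$. Since the sets $X_n(v)$ for distinct $v \in V_n$ are pairwise disjoint (paths through different level-$n$ vertices cannot coincide), we obtain the inclusion
\begin{equation*}
\wh X_{\ov B}^{(n)} \;\subset\; \bigsqcup_{w \in W_n} X_n(w) \;\subset\; X_B,
\end{equation*}
and hence
\begin{equation*}
\mu\bigl(\wh X_{\ov B}^{(n)}\bigr) \;\leq\; \sum_{w \in W_n} \mu(X_n(w)) \;\leq\; 1.
\end{equation*}

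Finally, passing to the limit $n \to \infty$ in this chain of inequalities, the left-hand side tends to $1$ by the first step, so the middle quantity is squeezed to $1$, giving \eqref{Eq:meas-ext-heights}. No part of the argument requires any particular rank assumption on $B$ or any ordering, so the statement holds in the generality claimed. There is essentially no obstacle here — the only point worth double-checking is that the monotone limit in~\eqref{Eq:meas_extension} genuinely recovers $\widehat{\ov\mu}(\wh X_{\ov B})$, which is immediate from the increasing union $\wh X_{\ov B} = \bigcup_n \wh X_{\ov B}^{(n)}$ implicit in the definition.
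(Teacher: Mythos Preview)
Your proof is correct and follows essentially the same approach as the paper: both use the inclusion $\wh X_{\ov B}^{(n)} \subset \bigsqcup_{w \in W_n} X_n(w)$, invoke~\eqref{Eq:meas_extension} to get $\lim_n \mu(\wh X_{\ov B}^{(n)}) = 1$, and squeeze using disjointness of the towers. Your write-up is somewhat more explicit about each step, but the argument is identical.
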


\begin{proof}
    For every $n \in \N$, we have $\wh X_{\ov B}^{(n)} \subset \bigcup_{w \in W_n} X_n(w)$. By~(\ref{Eq:meas_extension}), we obtain that
    $$
    1 = \mu(X_B) = \mu(\wh X_{\ov B}) = \lim_{n \rightarrow \infty} \mu\left(\wh X^{(n)}_{\ov B}\right) \leq \lim_{n \rightarrow \infty} \mu\left(\bigcup_{w \in W_n} X_n(w)\right).
    $$
    Since the towers of level $n$ are disjoint, we obtain~(\ref{Eq:meas-ext-heights}). 
\end{proof}

The next theorem is the first of two results concerning the rigidity of ergodic invariant measures supported on odometers.

\begin{thm}\label{Thm:rigid_mu_from_odom_withoutorder}
    Let $B$ be an ordered Bratteli diagram with incidence matrices $F_n = (f^{(n)}_{v,w})$ and $\varphi_B$ be the corresponding Vershik map. Let $\mu$ be an ergodic invariant probability measure on $B$. If $\mu$ is an extension from an odometer $\ov B = (\{\ov v\}, \ov E)$ such that
    \begin{equation}\label{eq:qf}
    \frac{\sum_{w \in V_n\setminus \{\ov v\}}f^{(n+1)}_{\ov v, w}}{f^{(n+1)}_{\ov v, \ov v}} \longrightarrow 0 \text{ \ as $n\to \infty$,} 
    \end{equation}
    then the system $(X_B, \varphi_B, \mu)$ is measure-theoretically rigid with rigidity sequence $(h_n(\ov v))_n$.
\end{thm}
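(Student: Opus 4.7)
My plan is to invoke Proposition~\ref{Prop:rigidity-cyl-sets} to reduce rigidity along $(h_n(\ov v))_n$ to showing $\mu(\varphi_B^{-h_n(\ov v)}(A)\triangle A)\to 0$ for every cylinder $A$ of length $k$. For $n\geq k$, it suffices to produce a ``good'' set $G_n\subseteq\wh X_{\ov B}^{(n)}$ with $\mu(G_n)\to 1$ on which $\varphi_B^{h_n(\ov v)}$ preserves the first $n$ coordinates, so that $x\in A\Leftrightarrow\varphi_B^{h_n(\ov v)}(x)\in A$ for $x\in G_n$.

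The orbit analysis is as follows. A path $x\in\wh X_{\ov B}^{(n)}$ sits at some position $j\in\{0,\dots,h_n(\ov v)-1\}$ in the tower $X_n(\ov v)$. The first $h_n(\ov v)-1-j$ iterations of $\varphi_B$ climb the tower to its top (staying in $X_n(\ov v)$ throughout, since $t(x_n)=\ov v$ is preserved at each Vershik step that changes only coordinates $\leq n$); the next iteration exits by locating the smallest $m\geq n+1$ with $x_m$ not maximal in $t^{-1}(\ov v)$, replacing $x_m$ by its successor $x_m'$ and resetting coordinates $1,\dots,m-1$ to the minimal path from $v_0$ to $s(x_m')$; the remaining $j$ iterations climb the tower at $s(x_m')$ back to position $j$. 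If $s(x_m')=\ov v$, the reset lands at position $0$ of the \emph{same} tower $X_n(\ov v)$, so the final climb recovers $(x_1,\dots,x_n)$; declare $G_n$ to be the set of $x\in\wh X_{\ov B}^{(n)}$ for which this good return occurs.

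To estimate $\mu(G_n)$, I exploit $\mathcal R$-invariance: conditional on $\wh X_{\ov B}^{(n)}$, the edges $x_{n+1},x_{n+2},\dots$ are independent, with $x_m$ uniform over the $f^{(m)}_{\ov v,\ov v}$ edges from $\ov v$ to $\ov v$ at level $m$. Since in the linear order on $t^{-1}(\ov v)$ each non-$\ov v$-sourced edge can spoil at most one immediately preceding $\ov v$-sourced edge, the number of $\ov v$-to-$\ov v$ edges at level $m$ that are non-maximal but whose successor has source $\neq\ov v$ is at most $\sum_{w\in V_{m-1}\setminus\{\ov v\}}f^{(m)}_{\ov v,w}$. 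Hence
\[
\mu(\wh X_{\ov B}^{(n)}\setminus G_n)\leq\mu(\wh X_{\ov B}^{(n)})\sum_{m\geq n+1}\left(\prod_{i=n+1}^{m-1}\frac{1}{f^{(i)}_{\ov v,\ov v}}\right)\frac{\sum_{w\in V_{m-1}\setminus\{\ov v\}}f^{(m)}_{\ov v,w}}{f^{(m)}_{\ov v,\ov v}},
\]
and hypothesis~\eqref{eq:qf} forces the right-hand side to tend to $0$ as $n\to\infty$ (the product weights are summable after possibly telescoping so that $f^{(i)}_{\ov v,\ov v}\geq 2$, and $\eqref{eq:qf}$ makes the factor $\sum_w f^{(m)}_{\ov v,w}/f^{(m)}_{\ov v,\ov v}$ uniformly small for large $m$). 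Combined with $\mu(\wh X_{\ov B}^{(n)})\to 1$ from~\eqref{Eq:meas_extension} (since $\mu(X_B\setminus\wh X_{\ov B})=0$ by construction), this gives $\mu(G_n)\to 1$.

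Since $G_n\cap A\subseteq A\cap\varphi_B^{-h_n(\ov v)}(A)$ and $\mu$ is $\varphi_B$-invariant,
\[
\mu(A\triangle\varphi_B^{-h_n(\ov v)}(A))=2\bigl(\mu(A)-\mu(A\cap\varphi_B^{-h_n(\ov v)}(A))\bigr)\leq 2\mu(X_B\setminus G_n)\to 0.
\]
The main obstacle is the orbit analysis, specifically the case $m>n+1$ when several initial tail edges of $x$ are maximal in $t^{-1}(\ov v)$ and the Vershik map defers its modification to a higher level; one must verify that the ``good return'' still holds provided the deferred successor lands back in a $\ov v$-edge, and the level-by-level application of~\eqref{eq:qf} that produces the geometric sum above is what controls this deferred contribution.
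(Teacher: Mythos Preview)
Your overall strategy—defining the good set $G_n\subseteq\wh X_{\ov B}^{(n)}$ on which $\varphi_B^{h_n(\ov v)}$ preserves the first $n$ coordinates and showing $\mu(G_n)\to 1$—is exactly what the paper does. However, your orbit analysis for the deferred case $m>n+1$ is not correct, and consequently your displayed sum is not in general an upper bound for $\mu(\wh X_{\ov B}^{(n)}\setminus G_n)$.

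The gap is this: when $x_{n+1},\dots,x_{m-1}$ are all maximal in $t^{-1}(\ov v)$ and the successor $x_m'$ of $x_m$ satisfies $s(x_m')=\ov v$, the Vershik step resets $(x_1,\dots,x_{m-1})$ to the minimal path into $\ov v\in V_{m-1}$. But this minimal path need not pass through $\ov v$ at level $n$: the minimal edge in $t^{-1}(\ov v)$ at some intermediate level $n<i<m$ may well have source $\neq\ov v$, in which case the reset lands in a different level-$n$ tower and the first $n$ coordinates are \emph{not} recovered after the remaining $j$ iterations. Your bound charges only the event ``$x_m$ is non-maximal with successor of source $\neq\ov v$'' and misses this possibility entirely; in an ordering where the minimal edge into $\ov v$ at level $n+1$ has source $\neq\ov v$, the \emph{entire} deferred case is bad.

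The paper sidesteps this by never leaving level $n+1$: it simply throws the event ``$x_{n+1}$ is maximal in $t^{-1}(\ov v)$'' (conditional probability $\leq 1/f^{(n+1)}_{\ov v,\ov v}$) into the bad set along with ``$x_{n+1}$ is non-maximal with successor of source $\neq\ov v$'', obtaining the one-level bound
\[
\mu\bigl(\wh X_{\ov B}^{(n)}\setminus G_n\bigr)\ \leq\ \mu\bigl(\wh X_{\ov B}^{(n)}\bigr)\cdot\frac{1+\sum_{w\in V_n\setminus\{\ov v\}}f^{(n+1)}_{\ov v,w}}{f^{(n+1)}_{\ov v,\ov v}}.
\]
This tends to $0$ by~\eqref{eq:qf} (which forces $f^{(n+1)}_{\ov v,\ov v}\to\infty$ unless the numerator is eventually zero, a degenerate odometer case). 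Replacing your infinite sum by this single term—i.e., keeping only your $m=n+1$ contribution and adding the maximal-edge term—closes the gap and gives precisely the paper's argument.
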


\begin{proof}
    Let the finite ergodic measure $\mu$ be the extension from the odometer on vertex $\overline{v}$.
    For simplicity, we will denote by $C_n(w)$ any a cylinder set of level $n$ ending in vertex $w\in V_n$, since all of them carry the same measure $\mu$. Take an arbitrary cylinder set $C_N$ ending in $v\in V_N$ and $\varepsilon>0$.
    Then there exists a level $n > N$ such that 
    $$
    \sum_{w \in V_n\setminus \{\ov v\}} \mu(X_n(w))= \sum_{w \in V_n\setminus \{\ov v\}}(F_n \cdots F_{1})_{w, v_0} \mu(C_n(w))  < \varepsilon
    $$
    and (because \eqref{eq:qf} implies that $f^{(n+1)}_{\ov v, \ov v} \to \infty$)
    $$
    \frac{1+\sum_{w \in V_n\setminus \{\ov v\}}f^{(n+1)}_{\ov v, w}}{f^{(n+1)}_{\ov v, \ov v}} < \varepsilon.
    $$
    Decompose $C_N$ into cylinder sets ending in level $n$ and observe that
    $$
    \mu(C_N) = \sum_{w \in V_n} (F_n \cdots F_{N+1})_{w, v} \mu(C_n(w)) \leq (F_n \cdots F_{N+1})_{\ov v, v} \mu(C_n(\ov v)) + \varepsilon.
    $$
    Let $C_n$ be one such subcylinder of $C_N$ ending in $\ov v \in V_n$. Then we can look at the image of $C_n$ under $T^{h_n(\ov v)}$
    \begin{align*}
        \mu(C_n \cap T^{h_n(\ov v)}C_n) &\geq (f^{(n+1)}_{\ov v, \ov v} - 1 - \sum_{w\in V_{n}\setminus\{\ov v\}} f^{(n+1)}_{\ov v, w})\mu(C_{n+1}(\ov v))
    \end{align*}
    as all paths in $C_n$ that use an edge in $E_{n+1}$ that connects $\ov v\in V_n$ to $\ov v \in V_{n+1}$ and that are succeeded by an edge connecting $\ov v\in V_n$ to $\ov v \in V_{n+1}$ in the incoming order to $\ov v \in V_{n+1}$ return to themselves in $C_n$ after $h_n(\ov v)$ steps.
    Thus
    \begin{align}\nonumber
        \mu(C_N \cap T^{h_n(\ov v)}C_N) &\geq (F_n \cdots F_{N+1})_{\ov v, v}\mu(C_n \cap T^{h_n(\ov v)}C_n)\\
        &\nonumber\geq (F_n \cdots F_{N+1})_{\ov v, v}f^{(n+1)}_{\ov v, \ov v}\mu(C_{n+1}(\ov v)) \left( 1-\frac{1+\sum_{w\in V_{n}\setminus\{\ov v\}} f^{(n+1)}_{\ov v, w}}{f^{(n+1)}_{\ov v,\ov v}} \right) \\
        &\label{cnn:star}\geq (F_n \cdots F_{N+1})_{\ov v, v}f^{(n+1)}_{\ov v, \ov v}\mu(C_{n+1}(\ov v))(1-\varepsilon)\\
        &\nonumber\geq (1-\varepsilon)(\mu(C_{N}(v)) - \sum_{w'\in V_{n+1}\setminus\{\ov v\}} (F_{n+1} \cdots F_{N+1})_{w', v}\mu(C_{n+1}(w'))\\
        &\nonumber \ -\sum_{w\in V_{n}\setminus\{\ov v\}} (F_n \cdots F_{N+1})_{w,v}f^{(n+1)}_{\ov v, w}\mu(C_{n+1}(\ov v)))\\
        &\nonumber\geq (1-\varepsilon)(\mu(C_{N}(v)) - 2\varepsilon).
    \end{align}
    This finishes the proof.
\end{proof}

\begin{rem}  
    This theorem requires no information about the order of the edges, since in its proof the estimate \eqref{cnn:star} used the worst possible order where every edge incoming to $\ov v$ from another vertex destroys the rigidity of an edge incoming to $\ov v$ from $\ov v$. If we have more information about the ordering, the extra condition of $\frac{\sum_{w \in V_n\setminus \{\ov v\}}f^{(n+1)}_{\ov v, w}}{f^{(n+1)}_{\ov v, \ov v}} \rightarrow 0$ can be weakened or is trivially satisfied, see Corollary~\ref{cor: ERS_odometer_measure_rigid}.
\end{rem}

\begin{thm}\label{Thm:rigid_mu_from_odom}
    Let $B$ be an ordered Bratteli diagram (of arbitrary rank) and $\varphi_B$ be the corresponding Vershik map. Let $\mu$ be an  ergodic invariant probability measure on $B$. Assume that $\mu$ is an extension from a consecutively ordered odometer $\ov B = (\{\ov v\}, \ov E)$ such that
    \begin{equation}\label{eq:limsup_of_number_of_edges_in_odometer}
                \limsup_{n\to\infty} f^{(n)}_{\ov v, \ov v} = \infty.    
    \end{equation}

    Then the system $(X_B, \varphi_B, \mu)$ is measure-theoretically rigid. The rigidity sequence is a subsequence of the heights $(h_n(\ov v))_n$. 
\end{thm}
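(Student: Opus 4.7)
The approach is to adapt the counting argument in the proof of Theorem~\ref{Thm:rigid_mu_from_odom_withoutorder}, replacing its crude count of ``good'' edges at level $n{+}1$ with a sharper count made possible by the consecutive ordering. In that earlier proof, the worst-case interleaving of $\ov v$-to-$\ov v$ edges with $w$-to-$\ov v$ edges ($w\neq\ov v$) forced the subtraction of $\sum_{w\in V_n\setminus\{\ov v\}}f^{(n+1)}_{\ov v,w}$, which was the reason for assumption~\eqref{eq:qf}. Under the consecutive ordering of $\ov B$, the $f^{(n+1)}_{\ov v,\ov v}$ edges from $\ov v$ to $\ov v$ form a single consecutive block among the incoming edges to $\ov v\in V_{n+1}$, so at most the last edge of that block can fail to be ``good''. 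The count drops to $f^{(n+1)}_{\ov v,\ov v}-1$, whose relative defect vanishes along the subsequence from~\eqref{eq:limsup_of_number_of_edges_in_odometer}.

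By Proposition~\ref{Prop:rigidity-cyl-sets} it suffices to establish approximate rigidity on a cylinder $C_N$ at level $N$ ending at $v\in V_N$. Fix $\varepsilon>0$ and use Proposition~\ref{Prop:meas_ext_towers} together with~\eqref{eq:limsup_of_number_of_edges_in_odometer} to choose $n$ inside the $\limsup$-subsequence, large enough so that (i) $\sum_{w\neq\ov v}\mu(X_n(w))<\varepsilon$ and $\sum_{v'\neq\ov v}\mu(X_{n+1}(v'))<\varepsilon$, and (ii) $f^{(n+1)}_{\ov v,\ov v}\geq \mu(C_N)/\varepsilon$. Decompose $C_N$ into its level-$n$ sub-cylinders; (i) shows that the contribution of those not ending at $\ov v$ is below $\varepsilon$.

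For each sub-cylinder $C_n(\ov v)\subset C_N$ ending at $\ov v$, I claim that whenever $e$ and its successor $e'$ in the incoming order at $\ov v\in V_{n+1}$ both satisfy $s(e)=s(e')=\ov v$, then $T^{h_n(\ov v)}([C_n(\ov v),e])=[C_n(\ov v),e']$. Combined with the consecutive ordering, this yields
\[
\mu\bigl(C_n(\ov v)\cap T^{h_n(\ov v)}C_n(\ov v)\bigr)\geq \bigl(f^{(n+1)}_{\ov v,\ov v}-1\bigr)\mu(C_{n+1}(\ov v)).
\]
Summing over all sub-cylinders of $C_N$ ending at $\ov v$ and substituting the Markov identity
\[
f^{(n+1)}_{\ov v,\ov v}\mu(C_{n+1}(\ov v))=\mu(C_n(\ov v))-\sum_{v'\neq\ov v}f^{(n+1)}_{v',\ov v}\mu(C_{n+1}(v'))
\]
rewrites the leading part of the bound as $(F_n\cdots F_{N+1})_{\ov v,v}\mu(C_n(\ov v))$, which equals $\mu(C_N)$ up to the error controlled by (i). The defect from the $-1$ is controlled by $\mu(C_{n+1}(\ov v))\leq \mu(C_n(\ov v))/f^{(n+1)}_{\ov v,\ov v}$ together with (ii), and the extra sum over $v'\neq\ov v$ is dominated by the measure of paths in $C_N$ that miss $\ov v$ at level $n+1$, which is below $\varepsilon$ by (i). Assembling the pieces yields $\mu(C_N\cap T^{h_n(\ov v)}C_N)\geq \mu(C_N)-O(\varepsilon)$, and the rigidity sequence is extracted from $(h_n(\ov v))_n$ along the subsequence in~\eqref{eq:limsup_of_number_of_edges_in_odometer}.

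The main step to verify carefully is the Vershik-map identity $T^{h_n(\ov v)}([C_n(\ov v),e])=[C_n(\ov v),e']$ above. For $x$ at position $k\in\{0,\ldots,h_n(\ov v)-1\}$ in the level-$n$ tower of $\ov v$, the first $h_n(\ov v)-1-k$ iterates of $T$ move $x$ up to the top of the current tower without changing $x_{n+1}=e$; the next iterate is a Vershik ``carry'' that increments $e$ to its successor $e'$ in the incoming order at $\ov v\in V_{n+1}$ (well defined, since $e$ is not the overall maximum at $\ov v$) and resets the level-$n$ coordinates to the minimal path to $s(e')=\ov v$, landing at position $0$ of the level-$n$ tower of $\ov v$; the remaining $k$ iterates climb back to position $k$ of that same tower, which is uniquely the original path $(e_1,\ldots,e_n)$. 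After that identity is in hand, the rest of the proof is bookkeeping parallel to that of Theorem~\ref{Thm:rigid_mu_from_odom_withoutorder}.
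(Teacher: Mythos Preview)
Your proposal is correct and follows essentially the same approach as the paper: both hinge on the observation that, under the consecutive ordering, the $f^{(n+1)}_{\ov v,\ov v}$ edges from $\ov v$ to $\ov v$ form a single block in $t^{-1}(\ov v)$, so all but the last one have a successor with source $\ov v$, giving the estimate $\mu(C_n(\ov v)\cap T^{h_n(\ov v)}C_n(\ov v))\geq (f^{(n+1)}_{\ov v,\ov v}-1)\mu(C_{n+1}(\ov v))$. The paper then finishes by passing directly to the limit via the identity $\mu(C_N(w))=\lim_k (F_{n_k-1}\cdots F_{N+1})_{\ov v,w}\,a_{n_k}\mu(C_{n_k}(\ov v))$, whereas you unpack this limit into explicit error terms through the Markov identity and the tower-measure bounds from Proposition~\ref{Prop:meas_ext_towers}; the content is the same.
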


\begin{proof}
    Let $a_n = f^{(n)}_{\ov v, \ov v} $. 
    Since $\mu$ is an extension from the odometer which passes through the vertex $\ov v$ on each level of the diagram, the support of $\mu$ consists of all infinite paths that eventually passes through the first vertex. Hence, by Proposition~\ref{Prop:meas_ext_towers}, we have
    $$
    \lim_{n \to \infty }\mu(X_n(\ov v)) = 1 
    \qquad \text{ and } \qquad \lim_{n \to \infty }\mu\left(\bigsqcup_{w \neq \ov v} X_n(w)\right) = 0.
    $$
     Thus, for any cylinder set $C_N(w)$ which ends at vertex $w$ on level $N$ we have
     $$
    \mu(C_N(w)) = \lim_{k \rightarrow \infty} \mu(C_N{(i)} \cap X_{n_k}{(\ov v)} \cap X_{n_{k}+1}(\ov v)) = \lim_{k \rightarrow \infty} (F_{n_k-1}\cdots F_{N+1})_{\ov v,w}a_{n_k}\mu(C_{n_k}(\ov v)).
    $$
    We decompose $C_N(\ov v)$ into paths that stay in the vertex $\ov v$ and paths moving to the other vertices.  
The measure $\mu$ gives zero mass to all paths not tail equivalent to paths inside $\ov B$.
$$
\mu(C_N{(w)}) = \lim_{n \rightarrow \infty} \overline \mu(C_{n}{(\ov v)}) (F_{n}\cdots F_{N+1})_{\ov v,w} = \lim_{n \rightarrow \infty} \frac{(F_{n}\cdots F_{N+1})_{\ov v,w}}{a_1 a_2 \ldots a_{n}}.
$$
Since the order on the diagram is consecutive, we have
    \begin{align*}
    \mu(T^{h_{n_k}(\ov v)}(C_N{(w)}) \cap C_N{(w)}) 
    &\geq (a_{n_k}-1)(F_{{n_k}-1}\cdots F_{N})_{\ov v,w}\mu(C_{n_k}{(\ov v)})\\
    &= \frac{a_{n_k} - 1}{a_{n_k}}(F_{n_k-1}\cdots F_N)_{\ov v,w}a_{n_k}\mu(C_{n_k}(\ov v)) \rightarrow \mu(C_N{(w)})
\end{align*}
as $n \rightarrow \infty$. Thus, we proved measure-theoretical rigidity for all cylinder sets and a sequence $t_n = h_n$, hence the system  $(X_B, \varphi_B, \mu)$ is measure-theoretically rigid.
\end{proof}

\begin{rem}
In order for a measure extension from an odometer $\ov B$ to be finite, it is not necessary that \eqref{eq:limsup_of_number_of_edges_in_odometer} holds. For example, for reducible stationary Bratteli diagrams, the measure extension from the stationary odometer $\ov B$ can be finite (see \cite{BezuglyiKwiatkowskiMedynetsSolomyak2010} and Theorem \ref{thm_stat_nonsimple_BD}). It is also not a sufficient condition for a measure extension from $\ov B$ to be finite, since the number of edges may be growing not fast enough
to get the finite measure extension (see \cite{BezuglyiKwiatkowskiMedynetsSolomyak2013}, \cite{AdamskaBezuglyiKarpelKwiatkowski2017} and Proposition \ref{Prop:rank2example}).
\end{rem}

\begin{cor}
 Let $(X_B, \varphi_B, \mu)$ be as in Theorem~\ref{Thm:rigid_mu_from_odom}. Then $(X_B, \varphi_B, \mu)$ has zero entropy.
\end{cor}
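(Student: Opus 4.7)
The plan is to observe that this corollary is an immediate consequence of two results already stated in the paper. First, Theorem~\ref{Thm:rigid_mu_from_odom} gives that $(X_B, \varphi_B, \mu)$ is measure-theoretically rigid. Second, the discussion right after Proposition~\ref{Prop:rigidity-cyl-sets} records that rigidity is the same as partial rigidity with constant $\alpha=1$; in particular, every measure-theoretically rigid system is partially measure-theoretically rigid.

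Therefore, the argument is just to apply the contrapositive of Theorem~\ref{thm:pmtr_ze}: any partially measure-theoretically rigid system must have zero entropy. Chaining these observations gives the corollary without any further computation.

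The only thing to double-check is that no extra hypothesis is needed to invoke Theorem~\ref{thm:pmtr_ze}; since $(X_B,\varphi_B,\mu)$ is a Borel probability measure-preserving system on a Cantor set, Sina\u{\i}'s factor theorem applies directly, and so the cited theorem covers our setting. Thus the proof is a two-line deduction, and there is no substantive obstacle.
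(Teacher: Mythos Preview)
Your argument is correct and is exactly the intended one: the paper states this corollary without proof precisely because it is the immediate combination of Theorem~\ref{Thm:rigid_mu_from_odom} (rigidity) with Theorem~\ref{thm:pmtr_ze} (partial rigidity forces zero entropy), as already spelled out in Remark~\ref{rem:invertibility_part_rigid}.
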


Recall that two Cantor minimal systems $(X, T)$ and $(Y,S)$ are called \textit{Kakutani equivalent} if there exist clopen sets $U \subset X$ and $V \subset Y$ such that the induced systems $(U, T_U)$ and $(V, S_V)$ are topologically conjugate (see e.g. \cite{Durand2010}). It was proved in \cite{HermanPutnamSkau1992} that a Bratteli-Vershik dynamical system  $(X_B, \varphi_B)$ associated with a simple properly ordered Bratteli diagram $(B, \geq)$ is Kakutani equivalent to a Cantor minimal system $(Y,S)$ if and only if $(Y,S)$ is topologically conjugate to a Bratteli-Vershik system $(X_{B'}, \varphi_{B'})$, where $(B', \geq')$ and $(B, \geq)$ differ only on finite initial portions (see also \cite{DurandPerrin2022}).

\begin{cor}
    Let $(X_B, \varphi_B, \mu)$ be as in Theorem~\ref{Thm:rigid_mu_from_odom} and additionally $\varphi_B$ be minimal and $B$ be properly ordered. Let $(Y, T)$ be any Cantor minimal system which is Kakutani equivalent  to  $(X_B, \varphi_B)$. Then there is an ergodic invariant probability measure $\nu$ for $(Y, T)$ such that $\nu$ is an extension from an odometer and $(Y, T, \nu)$ is measure-theoretically rigid.
\end{cor}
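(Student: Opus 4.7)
The plan is to reduce the statement to Theorem~\ref{Thm:rigid_mu_from_odom} applied to a suitably modified diagram, and then transport the resulting measure across the topological conjugacy with $(Y,T)$.

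First, I would invoke the Herman--Putnam--Skau characterization of Kakutani equivalence cited in the paragraph preceding the corollary: since $(Y,T)$ is Kakutani equivalent to the minimal, properly ordered $(X_B,\varphi_B)$, there exists an ordered Bratteli diagram $(B',\geq')$ that differs from $(B,\geq)$ only on a finite initial portion, such that $(Y,T)$ is topologically conjugate to the Bratteli--Vershik system $(X_{B'},\varphi_{B'})$. Fix $N_0$ so that the levels $n>N_0$ of $B'$ coincide with those of $B$, including the order. In particular, the vertex $\ov v\in V_n$ survives in $B'$ for every $n>N_0$, and all edges between successive copies of $\ov v$, together with their order, are shared between $B$ and $B'$.

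Second, I would construct an odometer subdiagram $\ov B'$ of $B'$ by taking $\ov v\in V_n$ on every level $n\geq N_0$, completing it back to $v_0$ through any fixed connected path through the modified initial portion, and keeping exactly the edges of $\ov B$ between the $\ov v$'s on all levels $n>N_0$. Because $\ov B$ is consecutively ordered on these levels, so is $\ov B'$. Then I would define $\nu'$ on $X_{B'}$ as the canonical $\mathcal R$-invariant extension from $\ov B'$, as recalled in Section~\ref{Sect:ext_from_odom} following the conventions of \cite{BezuglyiKwiatkowskiMedynetsSolomyak2013}. Since the measure-extension construction, as well as Proposition~\ref{Prop:meas_ext_towers}, depends only on the tail structure of the diagram and on the heights of the towers $X_n(\ov v)$ on high levels, the finiteness and ergodicity of $\nu'$ follow from the corresponding properties of $\mu$; one simply uses $X_n(\ov v)$ for $n$ large and the fact that $\sum_{w\in V_n\setminus\{\ov v\}}\mu(X_n(w))\to 0$ by Proposition~\ref{Prop:meas_ext_towers}, which is an intrinsic tail statement.

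Third, I would check that the hypotheses of Theorem~\ref{Thm:rigid_mu_from_odom} hold for $(X_{B'},\varphi_{B'},\nu')$ with respect to the odometer subdiagram $\ov B'$: consecutive ordering holds on every level $n>N_0$, and the condition \eqref{eq:limsup_of_number_of_edges_in_odometer} involves only the sequence $(f^{(n)}_{\ov v,\ov v})_{n>N_0}$, which is shared between $B$ and $B'$. Theorem~\ref{Thm:rigid_mu_from_odom} then yields that $(X_{B'},\varphi_{B'},\nu')$ is measure-theoretically rigid along a subsequence of the heights $(h_n^{B'}(\ov v))_n$. Finally, pushing $\nu'$ forward through the topological conjugacy $X_{B'}\to Y$ gives the sought measure $\nu$, and measure-theoretical rigidity is preserved under measure-theoretical conjugacy, as observed immediately after the definition of rigidity in the paper.

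The only delicate point is verifying that $\nu'$ is actually a well-defined, finite, ergodic probability measure on $X_{B'}$ even though the initial finite portion of $B'$ may differ from that of $B$; this is the main bookkeeping obstacle. Once one observes that all defining limits (the extension formulas, the finiteness of the extension, and the ergodic decomposition) involve only level-$n$ data with $n>N_0$, where $B$ and $B'$ are identical, the verification reduces to a direct transfer, and the conclusion follows.
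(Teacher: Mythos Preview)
Your proposal is correct and follows precisely the approach the paper intends: the corollary is stated without proof, relying on the Herman--Putnam--Skau characterization of Kakutani equivalence recalled in the preceding paragraph, and your argument is the natural unpacking of that implication. The only point worth noting is that consecutive ordering of $\ov B'$ may fail on the finitely many modified levels $\leq N_0$, but as you implicitly observe, the proof of Theorem~\ref{Thm:rigid_mu_from_odom} only uses consecutive ordering at the levels $n_k\to\infty$ along which the $\limsup$ in \eqref{eq:limsup_of_number_of_edges_in_odometer} is realized, so restricting to $n_k>N_0$ suffices.
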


\subsection{Toeplitz systems as Bratteli-Vershik systems}

Below we introduce some classes of Bratteli diagrams (for more details, see e.g. \cite{AdamskaBezuglyiKarpelKwiatkowski2017}).

\begin{dfn}\label{Def:ERS}
    A non-negative integer matrix $F = (f_{ij})$ satisfies the 
    \begin{itemize}
        \item \textit{equal row sum} property (denoted $F \in ERS$ or $F \in ERS(r)$) if there is  $r \in \N$
such that
    $$
    \sum_{j} f_{ij} = r \qquad \text{ for all } i.
    $$
    \item \textit{equal column sum} property 
    (denoted $F \in ECS$ or $F \in ECS(c)$) if there is  $c \in \N$ such that
    $$
    \sum_{i} f_{ij} = c \qquad \text{ for all } j.
    $$
     \end{itemize}
\end{dfn}

Recall that $h_n{(w)}$ is the number of paths between $v_0$ and $w \in V_n$. If $F_n \in ERS(r_n)$ for all $n$, then it is easy to check by induction that 
$$
h_n(w) = r_1\cdots r_{n}
$$
for every $n$ and every $w \in V_n$. 
If $F_n \in ECS(c_n)$ for all $n$, then there is an invariant probability measure on $X_B$ such that the measures of cylinder sets $C_n(w)$ of length $n$ which end at a vertex $w \in V_n$ are
$$
\mu(C_n(w)) = \frac{1}{c_1\cdots c_{n}}.
$$

In \cite{GjerdeJohansen2000}, the ERS property for incidence matrices $(F_n)_n$ is called the \textit{equal path number property}. The following theorem holds:

\begin{thm}\cite{GjerdeJohansen2000}\label{Thm:GjerdeJohansenToeplitz}
The family of expansive Bratteli-Vershik systems associated to simple properly ordered Bratteli diagrams with the equal path number property coincides with the family of Toeplitz systems up to conjugacy.
\end{thm}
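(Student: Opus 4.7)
The plan is to prove the two inclusions separately, exploiting the observation that the ERS property on incidence matrices is exactly what forces all towers at a given level to have the same height, and hence is the combinatorial echo of the odometer factor structure underlying every Toeplitz system.

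First I would record the key combinatorial consequence: if $F_n\in ERS(r_n)$ for every $n\geq 1$, then (by induction on $n$) $h_n(w)=r_1\cdots r_n$ for every $w\in V_n$. Using this, I would define a canonical factor map $\pi\colon X_B\to G_{\mathbf{r}}$, where $\mathbf{r}=(r_1\cdots r_n)_{n\geq 1}$ is the associated scale, by sending $x\in X_B$ to the sequence of ``positions within the level-$n$ tower'', i.e. the number of predecessors of $x$ in $C_n(t(x_n))$ under the Vershik order truncated to level $n$. The ERS condition guarantees that these positions are consistent from level to level modulo $r_1\cdots r_n$, so $\pi$ lands in $G_{\mathbf{r}}$, and by construction of the Vershik map $\pi\circ\varphi_B=\tau\circ\pi$. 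Properness of the ordering ($|X_{\max}|=|X_{\min}|=1$) gives that the only points in $G_{\mathbf{r}}$ with non-singleton fiber lie in the orbit of $\pi(X_{\max})$, so $\pi$ is almost $1$-$1$. Simplicity of $B$ yields minimality of $(X_B,\varphi_B)$. Putting all of this together with the expansiveness assumption, $(X_B,\varphi_B)$ is a minimal subshift which is an almost $1$-$1$ extension of the odometer $(G_{\mathbf{r}},\tau)$, so Theorem~\ref{thm:odometers} identifies it as the orbit closure of a regularly recurrent sequence, hence a Toeplitz system (the periodic case being excluded by expansiveness together with minimality).

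For the reverse inclusion I would start from a Toeplitz sequence $\omega$ with periodic structure $(p_n)_{n\geq 1}$, and build a Bratteli-Vershik model out of the natural Kakutani-Rokhlin refinement associated with this structure. Concretely, for each $n$ I take $V_n$ to be the set of distinct $p_n$-aligned blocks of length $p_n$ occurring in the orbit closure $X_\omega$ (i.e.\ blocks $\omega[j,j+p_n)$ with $j\equiv 0\pmod{p_n}$ on the equicontinuous factor), and put one edge from $w\in V_{n-1}$ to $v\in V_n$ for each of the $p_n/p_{n-1}$ occurrences of $w$ as a sub-block in $v$ at a position divisible by $p_{n-1}$; the natural left-to-right reading of sub-blocks supplies a consecutive order on incoming edges. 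Since every $v\in V_n$ decomposes into exactly $p_n/p_{n-1}$ aligned sub-blocks, every row of $F_n$ sums to $p_n/p_{n-1}$, giving ERS$(p_n/p_{n-1})$. Minimality and aperiodicity of the Toeplitz system yield simplicity of $B$ and properness of the resulting ordering (the unique maximal path corresponds to the unique sequence of ``rightmost'' aligned blocks, and analogously for the minimal path). Finally, the map that sends a path $x\in X_B$ to the bi-infinite sequence obtained by concatenating the level-$n$ blocks along $x$ and sliding by the level-$n$ position within the tower realizes the conjugacy between $(X_B,\varphi_B)$ and $(X_\omega,\sigma)$, the latter being expansive as a subshift.

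The main obstacle I anticipate is in the Toeplitz-to-diagram direction: proving that the hierarchical block encoding is well-defined and bijective onto $X_\omega$. The subtle point is that distinct paths in $X_B$ must correspond to distinct elements of $X_\omega$ despite the fact that many orbits of $\omega$ share long stretches; this is exactly where the aperiodicity of the $p_n$-skeletons $S_{p_n}(\omega)$ guaranteed by the definition of periodic structure is used to separate paths, and where one must verify that no two distinct tail-classes in $X_B$ are identified by the block encoding. Once this injectivity is established, almost $1$-$1$-ness of $X_B\to G_{\mathbf{s}}$ matches almost $1$-$1$-ness of $X_\omega\to G_{\mathbf{s}}$, and the conjugacy intertwines the two factor maps, completing the argument.
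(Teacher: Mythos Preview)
The paper does not give a proof of this statement; it is quoted from \cite{GjerdeJohansen2000} and used as a black box. There is therefore nothing in the paper to compare your proposal against.

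Your sketch follows the general shape of the Gjerde--Johansen argument and is broadly on the right track, but two points deserve tightening. In the diagram-to-Toeplitz direction, your justification of almost $1$-$1$ is slightly off: it is not that the non-singleton fibers lie over the orbit of $\pi(X_{\max})$, but rather that the fiber over $0\in G_{\mathbf{r}}$ is exactly $\{x_{\min}\}$ (a point with position $0$ in every tower must follow the minimal edge at every level), and one singleton fiber suffices by minimality. In the Toeplitz-to-diagram direction, properness of the ordering is not automatic from the naive block construction; one has to arrange (by a suitable choice or telescoping of the period structure) that all level-$n$ blocks share a common first sub-block and a common last sub-block, which is precisely where the work in \cite{GjerdeJohansen2000} lies. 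Your final paragraph correctly identifies the injectivity of the block encoding as the other delicate step.
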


For ERS systems we can use~\ref{Thm:rigid_mu_from_odom_withoutorder} to show the following.

\begin{cor}\label{cor: ERS_odometer_measure_rigid}
 Let $B$ be an ordered Bratteli diagram with incidence matrices $F_n = (f^{(n)}_{v,w})$ which satisfies the $ERS(r_n)$ property. If the ergodic invariant probability measure $\mu$ is an extension from an odometer $\ov B = (\{\ov v\}, \ov E)$, then the system $(X_B, \varphi_B, \mu)$ is measure-theoretically rigid with sequence $(h_n)_n$.
\end{cor}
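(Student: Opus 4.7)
The plan is to reduce the claim to Theorem~\ref{Thm:rigid_mu_from_odom_withoutorder}, whose only hypothesis is the edge-ratio condition~\eqref{eq:qf}, namely $\sum_{w\in V_n\setminus\{\ov v\}} f^{(n+1)}_{\ov v,w}\big/ f^{(n+1)}_{\ov v,\ov v}\to 0$. Once \eqref{eq:qf} is verified, the theorem delivers rigidity with sequence $(h_n(\ov v))_n$, and the $ERS$ property forces $h_n(\ov v)=h_n=r_1\cdots r_n$ to coincide with the common level-$n$ tower height, matching the statement of the corollary.

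Two inputs drive the verification. First, from $F_n\in ERS(r_n)$ every level-$n$ tower has the same height $h_n$, so $\mu(X_n(w))=h_n\mu(C_n(w))$ for every $w\in V_n$. Second, because $\mu$ is the extension of the odometer measure supported on $\ov v$, Proposition~\ref{Prop:meas_ext_towers} yields $\mu(X_n(\ov v))\to 1$, equivalently
\[
h_n\,\mu(C_n(\ov v))\longrightarrow 1 \qquad \text{and}\qquad h_n\sum_{w\in V_n\setminus\{\ov v\}}\mu(C_n(w))\longrightarrow 0.
\]

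I would then exploit the telescoping identity $\mu(C_n(w))=\sum_{v\in V_{n+1}} f^{(n+1)}_{v,w}\,\mu(C_{n+1}(v))$, which gives the pointwise lower bound $\mu(C_n(w))\ge f^{(n+1)}_{\ov v,w}\,\mu(C_{n+1}(\ov v))$ for every $w\in V_n$ as well as the decomposition
\[
\mu(C_n(\ov v))=f^{(n+1)}_{\ov v,\ov v}\mu(C_{n+1}(\ov v))+\sum_{v\neq\ov v} f^{(n+1)}_{v,\ov v}\mu(C_{n+1}(v)).
\]
Multiplying by $h_n$ and using the ERS bound $f^{(n+1)}_{v,\ov v}\le r_{n+1}$ together with $h_n r_{n+1}=h_{n+1}$, the second sum above is at most $h_{n+1}\sum_{v\neq\ov v}\mu(C_{n+1}(v))\to 0$, hence $h_n f^{(n+1)}_{\ov v,\ov v}\mu(C_{n+1}(\ov v))\to 1$. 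On the other hand, summing the lower bound over $w\neq\ov v$ yields $h_n\sum_{w\neq\ov v} f^{(n+1)}_{\ov v,w}\mu(C_{n+1}(\ov v))\le h_n\sum_{w\neq\ov v}\mu(C_n(w))\to 0$. The common factor $h_n\mu(C_{n+1}(\ov v))$ cancels from the quotient of these two quantities, giving \eqref{eq:qf}.

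The main technical step is the lower bound $h_n f^{(n+1)}_{\ov v,\ov v}\mu(C_{n+1}(\ov v))\to 1$: one must show that mass flowing from vertices $\neq\ov v$ at level $n$ into $\ov v$ at level $n+1$ is negligible on the scale $1/h_n$. The $ERS$ property is precisely what is needed here, since the uniform bound $f^{(n+1)}_{v,\ov v}\le r_{n+1}$ converts that remainder into $h_{n+1}\sum_{v\neq\ov v}\mu(C_{n+1}(v))$, which vanishes by the odometer-extension property.
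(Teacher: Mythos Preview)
Your argument is correct and arrives at condition~\eqref{eq:qf} just as needed to invoke Theorem~\ref{Thm:rigid_mu_from_odom_withoutorder}, but you take a genuinely different route from the paper. The paper quotes an external characterization (Theorem~2.1 in \cite{AdamskaBezuglyiKarpelKwiatkowski2017}) saying that the extension of the odometer measure is finite if and only if the series $\sum_n \sum_{w\neq\ov v} f^{(n+1)}_{\ov v,w} h_n\big/\prod_{i\leq n+1} f^{(i)}_{\ov v,\ov v}$ converges; since $\prod_{i\leq n} f^{(i)}_{\ov v,\ov v}\leq h_n$, each summand dominates the ratio in~\eqref{eq:qf}, and convergence of the series forces these terms to zero. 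Your approach avoids this citation entirely: you use only Proposition~\ref{Prop:meas_ext_towers} together with the ERS identity $\mu(X_n(w))=h_n\mu(C_n(w))$, and extract~\eqref{eq:qf} from the two limits $h_n f^{(n+1)}_{\ov v,\ov v}\mu(C_{n+1}(\ov v))\to 1$ and $h_n\mu(C_{n+1}(\ov v))\sum_{w\neq\ov v}f^{(n+1)}_{\ov v,w}\to 0$. The paper's version is shorter because it outsources the work, while yours is self-contained and makes transparent that only the tower-mass consequence of finiteness is actually needed, not the full series criterion.
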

\begin{proof}
    By~\cite[Theorem 2.1]{AdamskaBezuglyiKarpelKwiatkowski2017} we know that the extension from the subdiagram $\ov B = (\{\ov v\}, \ov E)$ is finite if and only if 
    $$
        \sum_{n=1}^{\infty} \sum_{w \in V_n\setminus \{\ov v\}} \frac{f^{(n+1)}_{\ov v, w}h_n(\ov v)}{\prod_{i = 1}^{n+1}f^{(i)}_{\ov v, \ov v}} < \infty.
    $$
    Thus condition \eqref{eq:qf} is always satisfied as $\prod_{i = 1}^{n}f^{(i)}_{\ov v, \ov v} \leq \prod_{j = 1}^{n}r_j = h_n(\ov v) $ and 
    \begin{align*}
        \frac{\sum_{w \in V_n\setminus \{\ov v\}}f^{(n+1)}_{\ov v, w}}{f^{(n+1)}_{\ov v, \ov v}} \leq \frac{\prod_{j = 1}^{n}r_j\sum_{w \in V_n\setminus \{\ov v\}} f^{(n+1)}_{\ov v, w}}{\prod_{i = 1}^{n+1}f^{(i)}_{\ov v, \ov v}} \longrightarrow 0 \text{ \ as $n\to \infty$.}    
    \end{align*}
\end{proof}

In \cite{Williams84} the following condition for regularity of Toeplitz systems is proven for subshifts, we have adapted it into the Bratteli-Vershik setting.

\begin{prop}\label{prop:length_reg_Toeplitz}
Let $(X,T)$ be a Toeplitz system of finite rank and let $B=(V,E)$ be a Bratteli-Vershik representation of $(X,T)$ by the construction from \cite{GjerdeJohansen2000}. Let $\theta_n$ be the substitution read on level $n$ with constant length $r_n$. If 
$$
    \sum_{n=1}^\infty \frac{1}{r_n} \text{\ diverges,}
$$ 
then the Toeplitz system is regular.
\end{prop}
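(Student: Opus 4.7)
The plan is to track, level by level, the density of positions at which the $h_n$-skeleton of $\omega$ is still undetermined. Write $h_n = r_1 \cdots r_n$; because $B$ has the equal path number property and $(X,T)$ is an almost $1$-$1$ extension of the corresponding odometer, the heights $(h_n)_n$ form a periodic structure for $\omega$. Call a position $j \in \{0,\ldots,h_n-1\}$ a \emph{coincidence at level $n$} if the $j$-th letter of the length-$h_n$ block is the same for every vertex $v \in V_n$, and write $c_n$ and $a_n = h_n - c_n$ for the resulting counts. Unfolding the odometer factor, a position $j$ is $h_n$-periodic in $\omega$ exactly when it is a coincidence at level $n$, so the number $q_n$ of asterisks in the $h_n$-skeleton equals $a_n$. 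The goal is therefore to prove $a_n/h_n \to 0$.

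Next, I would set up a one-step recursion coming from the substitution structure. For $w \in V_{n+1}$ the level-$(n+1)$ block of $w$ is the concatenation $\theta_{n+1}(w) = v_0(w) v_1(w) \cdots v_{r_{n+1}-1}(w)$ with each $v_j(w) \in V_n$ replaced by its length-$h_n$ block. Call slot $j$ a \emph{coincidence slot} if $v_j(w)$ does not depend on $w$, and let $s_{n+1}$ denote the number of such slots. A position $j h_n + j'$ of the level-$(n+1)$ block is a coincidence at level $n+1$ whenever (i) slot $j$ is a coincidence slot, in which case all $h_n$ positions of that slot are coincidences at level $n+1$, or (ii) $j'$ is already a coincidence at level $n$, in which case the symbol is stable across any variation of $v_j(w)$. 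Counting yields
\[
c_{n+1} \geq s_{n+1}\, h_n + (r_{n+1}-s_{n+1})\, c_n, \qquad a_{n+1} \leq (r_{n+1}-s_{n+1})\, a_n,
\]
so $a_{n+1}/h_{n+1} \leq (1 - s_{n+1}/r_{n+1}) \cdot a_n/h_n$.

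Finally, I would invoke the Gjerde-Johansen construction: the resulting diagram $B$ is simple and properly ordered, so there is a unique minimal and a unique maximal path. Uniqueness of the minimal (respectively, maximal) path forces the minimal (respectively, maximal) edge into every vertex of $V_{n+1}$ to emanate from one and the same vertex of $V_n$, which is to say that the first and last slots of $\theta_{n+1}$ are coincidence slots; hence $s_{n+1} \geq 1$. Iterating the recursion and using the elementary bound $1-x \leq e^{-x}$,
\[
\frac{a_n}{h_n} \;\leq\; \frac{a_1}{h_1} \prod_{k=2}^{n}\left(1 - \frac{1}{r_k}\right) \;\leq\; \frac{a_1}{h_1}\, \exp\!\left(-\sum_{k=2}^{n} \frac{1}{r_k}\right),
\]
and divergence of $\sum_k 1/r_k$ drives the right-hand side to $0$. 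Thus $\delta(\omega)=0$ and $(X,T)$ is regular.

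The main obstacle is the dictionary between the periodic structure on $\omega \in \mathcal{A}^{\mathbb{Z}}$ and the combinatorial coincidence structure at the levels of $B$; once this correspondence is in place, the rest is a routine product estimate. The edge case $r_n = 1$ presents no difficulty, since proper ordering then forces the length-$1$ substitution to be constant, making the unique slot a coincidence slot and zeroing the corresponding factor in the product.
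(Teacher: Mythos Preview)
Your argument is correct and follows essentially the same route as the paper: track the density of non-$h_n$-periodic positions through the levels, use properness of the substitutions from the Gjerde--Johansen construction to pin down at least one coincidence slot per level, and conclude that the density is bounded by a product $\prod_k (1 - c/r_k)$ which vanishes when $\sum_k 1/r_k$ diverges. The only cosmetic difference is that the paper exploits both the first and the last slot (giving $c=2$) whereas you retain only $s_{n+1}\geq 1$, but this has no bearing on the conclusion.
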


\begin{proof}
We know that by the construction $\theta_n$ is proper, primitive and of constant length $r_n>2$. In \cite{Kurka2003} the algorithm to go from a constant length, proper and primitive substitution to its Toeplitz sequence $\omega$ is explained.
We define
$p_n = \prod_{i=1}^n r_i$  as the length of substitution words in $\theta_1 \circ \dots \circ \theta_n$
and compute the density up to level $n$ as
$$
    \delta_n = \frac{\delta_{n-1}p_{n-1}(r_n-2)}{p_n} = \delta_{n-1}\frac{r_n-2}{r_n}.
$$
Therefore the density of unknown symbols is
$$
\delta(\omega) = \lim_{n \to \infty} \prod_{i=1}^{n}\left( 1 - \frac{2}{r_i}\right).
$$
Thus the density converges to zero if and only if $\sum_{i=1}^\infty \frac{2}{r_i} =\infty$.
\end{proof}

In case that $\sum_{n=1}^\infty \frac{1}{r_n} <\infty$, there might be another Bratteli-Vershik representation such that the sum diverges. By telescoping an existing Bratteli-Vershik system we can always find a representation such that the sum converges. Thus, Proposition~\ref{prop:length_reg_Toeplitz} provides only a sufficient but not necessary condition for a Toeplitz system to be regular.

\begin{rem}
    From this follows that any Toeplitz system generated by finitely many proper substitutions (in a Bratteli-Vershik context this property is called linearly recurrent) is regular.
\end{rem}

Proposition~\ref{Prop:rank2example} describes ergodic invariant measures and their supports for Bratteli diagrams with $2 \times 2$ incidence matrices satisfying the ERS property (see also \cite{FerencziFisherTalet2009}).

\begin{prop}\cite{AdamskaBezuglyiKarpelKwiatkowski2017}\label{Prop:rank2example}
Let $B$ be a Bratteli diagram with $2 \times 2$ incidence matrices $F_n = (f^{(n)}_{v, w})$ satisfying $ERS$:
\begin{equation}\label{2times2F}
    F_1 = \begin{pmatrix}
    1 \\ 
    1
\end{pmatrix} \quad \text{ and } \quad
F_n =
\begin{pmatrix}
a_n & b_n\\
c_n & d_n
\end{pmatrix}, \;
\text{ where } a_n + b_n = c_n + d_n = r_n \text{ for every } n \geq 2.
\end{equation}

\noindent (i) Let $\ov B (\ov v)$ be an odometer which is a vertex subdiagram of $B$ generated by vertices $\ov v \in V_n$. 
The extension of the unique invariant measure $\ov \mu$ from the odometer $\ov B (\ov v)$ to the path space $X_B$ is finite if and only if
\begin{equation}\label{eq:meas_ext_odom}
\sum_{n=0}^{\infty} \frac{f_{\ov v,v_n}^{(n+1)}}{r_{n+1}} < \infty,
\end{equation}
where $\{v_n\} = V_n \setminus \{\ov v\}$.

\noindent (ii) There are exactly two finite ergodic invariant measures on $B$ if and only if
\begin{equation}\label{2series}
\sum_{k=1}^{\infty}\frac{\min\{b_{k},d_{k}\}}{r_k}< \infty\  \  \mbox{ and }
\  \  \sum_{k=1}^{\infty} \frac{\min\{a_{k},c_{k}\}}{r_k} < \infty.
\end{equation}
In this case, these measures are supported on odometers that satisfy condition \eqref{eq:meas_ext_odom}.

\noindent (iii) 
If
\begin{equation}\label{min}
\sum_{k=1}^{\infty} \frac{\min\{a_{k},b_{k},c_{k},d_k\}}{r_k} = \infty,
\end{equation}
then there is no odometer such that the unique measure $\mu$ would be the extension of a measure supported by this odometer. 

\end{prop}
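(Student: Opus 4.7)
The plan is to apply Proposition~\ref{Prop:meas_ext_towers} to every odometer subdiagram $\ov B((w_n))$, where $(w_n)_{n\geq 0}$ is any choice of a single vertex $w_n\in V_n$ with $w_0=v_0$. Under ERS, every vertex has the same tower height $h_n(w)=r_1\cdots r_n$, while any cylinder ending at $w_n$ carries extended mass $1/(p_1\cdots p_n)$, where $p_k:=f^{(k)}_{w_k,w_{k-1}}$ counts edges internal to the odometer at level $k$. Hence the unnormalized extension satisfies
\begin{equation*}
\hat{\ov\mu}(X_n(w_n)) = \prod_{k=1}^n \frac{r_k}{p_k},
\end{equation*}
which is non-decreasing in $n$. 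By Proposition~\ref{Prop:meas_ext_towers} the extension is finite precisely when this product is bounded, equivalently when $\sum_k (r_k-p_k)/r_k <\infty$. Since the row sum property gives $r_k-p_k = f^{(k)}_{w_k,w'_{k-1}}$, where $w'_{k-1}$ is the vertex at level $k-1$ distinct from $w_{k-1}$, I call these the \emph{exit counts}; the finiteness criterion is the summability of $f^{(k)}_{w_k,w'_{k-1}}/r_k$.

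With this criterion, parts (i) and (iii) become immediate. For (i), take the constant choice $w_n\equiv\ov v$, so $p_n=a_n$ and $r_n-p_n=b_n=f^{(n)}_{\ov v,v_{n-1}}$, yielding exactly condition~\eqref{eq:meas_ext_odom}. For (iii), whatever the transition $(w_{k-1},w_k)$ is, the exit count is one of $\{a_k,b_k,c_k,d_k\}$, so the exit-count sum is always at least $\sum_k \min\{a_k,b_k,c_k,d_k\}/r_k$, and hypothesis~\eqref{min} rules out every odometer subdiagram simultaneously.

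Part (ii) is the main obstacle. The key observation is that from $w_{n-1}=\ov v$ the exit count at level $n$ is $b_n$ (if $w_n=\ov v$) or $d_n$ (if $w_n=u_n$), while from $w_{n-1}=u_{n-1}$ it is $a_n$ or $c_n$. The \emph{greedy} odometer always realizes $\min(b_n,d_n)$ or $\min(a_n,c_n)$ at each level, and condition~\eqref{2series} is precisely what is needed for its exit-count series to converge. Starting with $w_1=\ov v$ or $w_1=u_1$ produces two greedy odometers $\ov B^{(1)},\ov B^{(2)}$ with finite extensions; distinctness follows because, via Proposition~\ref{Prop:meas_ext_towers}, their supports concentrate at every level on the disjoint towers $X_n(\ov v)$ and $X_n(u_n)$. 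That these are all the ergodic measures uses the general bound $|V_n|=2$ on the number of ergodic invariant probability measures on a rank-$2$ Bratteli diagram (Bezuglyi-Kwiatkowski-Medynets-Solomyak). The converse direction---that two finite ergodic measures force both conditions in~\eqref{2series}---requires showing that the failure of either sum, say $\sum_k\min(b_k,d_k)/r_k=\infty$, forces any odometer whose sequence is at $\ov v$ infinitely often to have divergent exit-count series, leaving at most one surviving candidate. The heart of the proof is this case analysis: verifying that any deviation from the greedy rule at an infinite set of levels inflates the exit-count series beyond summability, so that the odometer extensions captured by~\eqref{2series} account for exactly the ergodic measures of $B$.
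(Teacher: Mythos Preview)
The paper does not supply its own proof of this proposition: it is quoted verbatim from \cite{AdamskaBezuglyiKarpelKwiatkowski2017} and used as a black box. So there is no ``paper's proof'' to compare against; I can only assess your argument on its own merits.

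Your treatment of (i) and (iii) is essentially correct. The invocation of Proposition~\ref{Prop:meas_ext_towers} is imprecise---that proposition is about a \emph{normalized} extension and says nothing about finiteness---but the underlying computation is right: for any odometer path $(w_n)$ with $p_k$ internal edges at level $k$, the (unnormalized) extended mass of the tower $X_n(w_n)$ is exactly $h_n/(p_1\cdots p_n)=\prod_{k\leq n} r_k/p_k$, and the total mass of $\hat{\ov\mu}$ equals the limit of this product. The equivalence with $\sum_k (r_k-p_k)/r_k<\infty$ then gives (i), and the lower bound by $\sum_k \min\{a_k,b_k,c_k,d_k\}/r_k$ gives (iii).

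Part (ii) has two genuine gaps. First, your distinctness claim for the two greedy odometers is asserted but not argued, and your stated reason (``supports concentrate on $X_n(\ov v)$ and $X_n(u_n)$'') is wrong as written, since a greedy path may switch vertices. What actually saves you is the ERS identity $a_k+b_k=c_k+d_k$: it forces $b_k\leq d_k \iff c_k\leq a_k$, so the two greedy paths, once at distinct vertices, remain at distinct vertices forever. You need to say this. Second, your converse direction is only a sketch, and it rests on an unjustified assumption: you argue that if one of the series in~\eqref{2series} diverges then at most one \emph{odometer extension} is finite, but you never explain why every ergodic invariant probability measure must arise as an odometer extension. Part (iii) of the very proposition you are proving shows this can fail. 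The full result in \cite{AdamskaBezuglyiKarpelKwiatkowski2017} relies on the structure theory of ergodic measures on finite-rank diagrams (from \cite{BezuglyiKwiatkowskiMedynetsSolomyak2013}), not merely on counting viable odometers; that machinery, or an equivalent direct analysis of the simplex of invariant measures, is what is missing here.
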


\begin{rem}
    Let $B$ be as in \eqref{2times2F}. Assume that $B$ has an ergodic invariant probability measure which is an extension from an odometer. After changing the numeration of the vertices, we may always assume that the odometer passes through the first vertex of each level. From \eqref{eq:meas_ext_odom} it follows that
    $$
    \sum_{n=0}^{\infty} \frac{b_n}{r_{n}} < \infty.
    $$
    Assume that $B$ is simple, hence $b_n \neq 0$ for infinitely many $n$. Thus, we have 
    $$
    \limsup_{n\to\infty} r_n = \infty
    $$
    and hence
    $$
    \limsup_{n\to\infty} a_n = \infty.
    $$

\end{rem}

We let $B$ be as in \eqref{2times2F} with a simple hat and investigate its rigidity behaviors.
By Theorems~\ref{Thm:DownarMaass}, \ref{Thm:GjerdeJohansenToeplitz} and \ref{Thm:S-adic_Rep_BD} if the diagram $B$ is properly ordered then it can model either an odometer (if $a_n = b_n$ and the same order for incoming edges in both vertices) or a Toeplitz system ($S$-adic subshift).

Generally there are three possible situations for the ergodic measures. Either there are two ergodic measures, both finite extensions from odometers or there is a unique invariant measure which is either a finite extensions from an odometer or it is of exact finite rank, see Proposition \ref{Prop:rank2example}. If the ergodic measures are finite extensions from an odometer, Proposition \ref{Prop:rank2example} (\ref{eq:meas_ext_odom}) shows that the conditions of Theorem \ref{Thm:rigid_mu_from_odom_withoutorder} are satisfied and thus the measure is rigid. In the case of a unique invariant measure with exact finite rank we need further assumptions such as regularity by $\sum 1/r_n < \infty$ or $a_n = d_n$ to achieve rigidity. For example let $F_n = F$ be stationary with $a, b, c,d \in \N$ such that $a + b= c+d = r$, then the system is uniquely ergodic and by Proposition~\ref{prop:length_reg_Toeplitz} the Toeplitz system is regular. Furthermore by Remark~\ref{Rem:RegToeplitz} its unique invariant measure is rigid.
Another exact finite rank example that is irregular is shown later in Example \ref{ex:rigidToeplitz_2x2_exactfiniterank}.

In the case of a $2 \times 2$ incidence matrix additionally satisfying the ECS property we have the following result.

\begin{thm}\label{Thm:rigid_a_n_b_n_uniquely_erg}
Let $B$ be a non-stationary Bratteli diagram with incidence matrices 
$$
  F_1 = \begin{pmatrix}
    1 \\ 
    1
\end{pmatrix} \quad \text{ and } \quad
F_n =
\begin{pmatrix}
a_n & b_n\\
b_n & a_n
\end{pmatrix}, \  \text{ for } \ n \geq 2.
$$
 such that $a_n + b_n \to \infty$ as $n \rightarrow \infty$. Denote $r_n = c_n = a_n + b_n$ for $n \geq 2$ and $c_1 = 2$ , $r_1= 1$. Then for all $n \geq 1$ both $F_n \in ERS(r_n)$ and $F_n \in ECS(c_n)$. In particular, there exists a probability invariant measure $\mu$ (not necessarily ergodic) on $B$ such that
$$
    \mu(C_n) = \frac{1}{c_1 \cdots c_{n}} \text{ for a cylinder set of length $n$.}
$$
    Endow $B$ with a left-to-right ordering and let $\varphi_B$ be a corresponding Vershik map. Then the system $(X_B, \varphi_B, \mu)$ is measure-theoretically rigid.
\end{thm}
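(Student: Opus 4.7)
The plan is to verify the equivalent rigidity criterion $\mu(\varphi_B^{-t_n}(A)\cap A)\to\mu(A)$ along $t_n = h_n$ on the algebra of cylinder sets, and then appeal to Proposition~\ref{Prop:rigidity-cyl-sets} (with $\alpha_n \to 1$; see the remark after it) to pass to all measurable sets. Two structural features drive the argument: the ERS property forces both level-$n$ towers to have the common height $h_n = r_2\cdots r_n$, and the ECS property forces $\mu$ to assign the common value $1/(2h_n)$ to every level-$n$ cylinder regardless of its terminal vertex.

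Fix $N$, a cylinder $C_N$ ending at $v\in V_N$, and a level $n>N$. I would decompose $C_N$ into its level-$(n+1)$ subcylinders, each of which, ending at some $w\in V_{n+1}$, is coded by a pair $(k,p)$, where $k\in\{1,\dots,r_{n+1}\}$ is the rank of its level-$(n+1)$ edge among the edges incoming to $w$ (in the left-to-right order) and $p\in\{1,\dots,h_n\}$ is the position of its level-$n$ prefix within the tower over the source $s_k$ of that edge. Using that the $h_n$ iterations of $\varphi_B$ increment $x_{n+1}$ exactly once and leave the coordinates above level $n+1$ untouched, one verifies that if $k<r_{n+1}$ then $\varphi_B^{h_n}$ sends the subcylinder coded by $(k,p)$ onto the one coded by $(k+1,p)$. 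Whenever $s_k = s_{k+1}$, these two subcylinders share the same level-$n$ prefix, hence the same level-$N$ terminal vertex, so one lies in $C_N$ if and only if the other does. With the left-to-right order, the source switches only at $k = a_{n+1}$ when $w=1$ and only at $k = b_{n+1}$ when $w=2$; together with the overflow rank $k=r_{n+1}$, this leaves $r_{n+1}-2$ \emph{good} ranks at each target vertex.

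Summing the contributions of the good ranks, each carrying $(F_n\cdots F_{N+1})_{s_k,v}$ admissible $p$-indices and subcylinder measure $1/(2h_{n+1})$, and using the ECS identity $\sum_{w\in V_{n+1}}(F_{n+1}\cdots F_{N+1})_{w,v} = r_{n+1}\sum_{u\in V_n}(F_n\cdots F_{N+1})_{u,v}$, the symmetric combinatorics should collapse to
\begin{equation*}
\mu\bigl(C_N\cap\varphi_B^{-h_n}(C_N)\bigr)\;\geq\;\left(1-\frac{2}{r_{n+1}}\right)\mu(C_N).
\end{equation*}
Since $r_{n+1}=a_{n+1}+b_{n+1}\to\infty$, the right-hand side tends to $\mu(C_N)$, proving rigidity along $(h_n)_n$ on the cylinder algebra, and Proposition~\ref{Prop:rigidity-cyl-sets} then yields rigidity on all measurable sets. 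The hardest part will be the combinatorial bookkeeping: verifying that $\varphi_B^{h_n}$ really does permute the level-$(n+1)$ tower positions as claimed (which relies on ERS so that the $p$-index is well-defined across sub-towers over both possible sources) and that the symmetric interplay of $a_{n+1}$ and $b_{n+1}$ in the two rows of $F_{n+1}$, combined with ECS, makes the contributions from $w=1$ and $w=2$ collapse into a single factor $1-2/r_{n+1}$ independent of $N$ and $v$.
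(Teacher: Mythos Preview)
Your proposal is correct and follows essentially the same route as the paper's proof: decompose $C_N$ into level-$n$ (or level-$(n+1)$) subcylinders, use the left-to-right consecutive ordering to count the edge ranks $k$ for which $s_k=s_{k+1}$, and conclude that all but two ranks per target vertex return to $C_N$ after $h_n$ iterates, yielding the same bound $\mu(C_N\cap\varphi_B^{-h_n}C_N)\geq\frac{a_{n+1}+b_{n+1}-2}{a_{n+1}+b_{n+1}}\,\mu(C_N)$. Your $(k,p)$-coding makes the role of ERS (well-definedness of $p$ across both sources) and of the symmetric matrix structure (the $(a_{n+1}-1)+(b_{n+1}-1)$ count collapsing identically for both targets) more explicit than the paper's brief computation, but the argument is the same.
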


\begin{proof}
We take an arbitrary cylinder set $C_N$ that ends in a vertex $w \in V_N$, then for any $n>N$ we decompose it into cylinders $C_n{(v)}$ of length $n > N$.  There are $(F_{n-1}\cdots F_{N})_{v,w}$ cylinders $C_n{(v)}$ ending in a vertex $v\in V_n$.
Now we take all paths in $C_n{(v)}$ such that the edge $x_{n+1}$ to $\Tilde{v} \in V_{n+1}$ is not the last in the block of edges connecting $v$ to $\Tilde{v}$. These paths will return to the cylinder $C_n{(v)}$ after $h_n(v)$ steps. Thus by the special structure of $F_n$ we get
\begin{align*}
    \mu(T^{h_n}(C_N) \cap C_N) &\geq (a_{n+1}+b_{n+1}-2)((F_{n}\cdots F_{N+1})_{1,w} + (F_{n}\cdots F_{N+1})_{2,w})\mu(C_{n+1})\\
    &=(a_{n+1}+b_{n+1}-2)\frac{(a_{N+1}+b_{N+1})\cdots(a_{n} + b_{n})}{(a_0 + b_0) \cdots (a_{n+1} + b_{n+1})}\\
    & = \frac{a_{n+1}+b_{n+1}-2}{(a_0 + b_0) \cdots (a_{N}+b_{N})(a_{n+1} + b_{n+1})}\\
    & = \frac{a_{n+1}+b_{n+1}-2}{a_{n+1}+b_{n+1}}\mu(C_N).
\end{align*}
Denote
$$
\alpha_n =  \frac{a_n+b_n-2}{a_n+b_n}.
$$
Then $\alpha_n \rightarrow 1$ as $n \rightarrow \infty$. Thus
$$
\lim_{n \rightarrow \infty}\mu(T^{h_n}(C_N) \cap C_N) = \mu(C_N)
$$
for all cylinder sets $C_N$. Thus, the system $(X_B, \varphi_B, \mu)$ is measure-theoretically rigid.
\end{proof}

If by Proposition 3.1 in \cite{AdamskaBezuglyiKarpelKwiatkowski2017} diagram $B$ has a unique ergodic invariant probability measure, then the measure defined in the Theorem is it. Thus in the case of $B$ with exact finite rank, the system $(X_B, \varphi_B, \mu)$ rigid.

\begin{ex}[Rigid Toeplitz system which is not measure-theoretical isomorphic to any odometer]\label{ex:rigidToeplitz_2x2_exactfiniterank}
As in Example 6 from \cite{ArbuluDurandEspinoza2023} we define an S-adic subshift $(X,T)$ on the alphabet for $\mathcal{A} = \{1,2 \}$ with substitutions
$$
\theta_n :
\begin{cases}
1\mapsto (121)^{s(n)}2,\\
2\mapsto 1(121)^{s(n)}
\end{cases}
$$
and $3s(n)+1 = 5^{2n}$. These substitutions are primitive and of constant length. By \cite{ArbuluDurandEspinoza2023} the generated subshift is a Toeplitz system with finite topological rank (thus zero entropy) that does not have a discrete spectrum and is not measure-theoretical isomorphic to its maximal equicontinuous factor, the odometer $(\Z_5, +1)$, or any other odometer.

We can represent this subshift as a Bratteli-Vershik system. As the substitutions $\theta_n$ are not proper the Bratteli diagram has different substitution reads on even and odd levels, see \cite{DurandLeroy2012}
$$
\theta_{2n} :
\begin{cases}
1\mapsto (121)^{s(2n)}2,\\
2\mapsto 1(121)^{s(2n)}
\end{cases}
$$
and
$$
\theta'_{2n+1} :
\begin{cases}
1\mapsto 12(121)^{s(2n+1)-1}21,\\
2\mapsto (121)^{s(2n+1)}1.
\end{cases}
$$
The incidence matrices are
$$
F_n = \begin{pmatrix}
    2s(n) & s(n)+1\\
    2s(n) +1 & s(n)
\end{pmatrix}
$$
with ERS for all $n\in\N$, thus $h_n(1) = h_n(2)= h_n$. The diagram is of exact finite rank and thus has a unique ergodic probability measure $\mu$ by Theorem~\ref{thm:BKMS13ExactFinRank}.
It follows from Theorem~\ref{thm:Danilenko}
that the system is partially rigid. To show measure-theoretical rigidity we prove that $(3h_m)_m$ is a rigidity sequence.

Take any level $m\in\N$ and cylinder set $C_m(1)$, decomposing this into sets of level $m+1$ gives
$$
    \mu(C_m(1))= 2s(m+1)\mu(C_{m+1}(1)) + (2s(m+1) +1)\mu(C_{m+1}(2)).
$$

By the many repetitions of the letter $1$ in $\theta_n(1)$ (or $\theta'_n(1)$) in every third position, we see that all but at most 4 cylinder subsets $C_{m+1}(1)\subseteq C_m(1)$ return to $C_m(1)$
$$
    T^{3h_m}C_{m+1}(1) \subseteq T^{3h_m}C_{m}(1) \cap C_m(1).
$$
Similarly all but at most 2 cylinder subsets $C_{m+1}(2)\subseteq C_m(1)$ return to $C_m(1)$ after $3h_m$-steps. For cylinder sets $C_m(2)$ all but at most 2 cylinder subsets $C_{m+1}(w)\subseteq C_m(2)$ return after $3h_m$-steps.

Thus for arbitrary cylinder set $C_N(v)$ and $\varepsilon > 0$ by Lemma~\ref{lem:height_m mearsure_m+1 to zero} there exists a level $m$ such that $10h_m\mu_{m+1}(w)<\varepsilon$ for all $w\in V_{m+1}$. We decompose $C_N(v)$ into sets of length $m > N$ and then
\begin{align*}
    \mu(T^{3h_m}C_{N}(v) \cap C_{N}(v)) &\geq (F_m \cdots F_{N+1})_{1, v}\mu(T^{3h_m}C_{m}(1) \cap C_{m}(1))\\  
            &\qquad + (F_m \cdots F_{N+1})_{2, v}\mu(T^{3h_m}C_{m}(2) \cap C_{m}(2)) \\
        &\geq  (F_m \cdots F_{N+1})_{1, v}(\mu(C_m(1)) - 4\mu(C_{m+1}(1)) -2\mu(C_{m+1}(2))) \\ 
            &\qquad + (F_m \cdots F_{N+1})_{2, v}(\mu(C_m(2)) - 2\mu(C_{m+1}(1)) -2\mu(C_{m+1}(2))) \\
        &\geq \mu(C_N(v)) - 10h_m\mu(C_{m+1}(w)) \\
        &> \mu(C_N(v)) - \varepsilon.      
\end{align*}
Thus the system is rigid with rigidity sequence $(3h_m)_m$.

\begin{lem}\label{lem:height_m mearsure_m+1 to zero} 
Let $B$ be a Bratteli diagram of exact finite rank and equal row sum such that $r_n\to \infty$. Then there exists a subsequence $(n_k)_k$ such that 
$$
    \lim_{k\to \infty} h_{n_k-1}\mu(C_{n_k}(v)) = 0      
$$
for all cylinder sets $C_{n_k}(v)$ ending in vertices $v\in V_{n_k}$.
\end{lem}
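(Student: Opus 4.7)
The plan is to exploit the equal row sum property together with the partition of $X_B$ into towers; the exact finite rank hypothesis is actually not needed for the upper bound we want, and the conclusion will hold along the entire sequence, so no genuine subsequence extraction is required.

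First I would recall that because $F_n \in ERS(r_n)$ for all $n$, a straightforward induction gives $h_n(v) = h_n := r_1 r_2 \cdots r_n$ independently of the choice of $v \in V_n$. Consequently the tower $X_n(v)$ is the disjoint union of the $h_n$ cylinder sets of length $n$ terminating at $v$, all of which carry the same invariant measure $\mu(C_n(v))$. Therefore
$$
\mu(X_n(v)) \;=\; h_n\, \mu(C_n(v)).
$$

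Second, the towers $\{X_n(v) : v \in V_n\}$ form a partition of $X_B$, since each infinite path passes through exactly one vertex on level $n$. Hence $\sum_{v \in V_n} \mu(X_n(v)) = 1$, yielding the uniform bound $\mu(X_n(v)) \le 1$ for every $n$ and every $v \in V_n$.

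Combining the two observations,
$$
h_{n-1}\, \mu(C_n(v)) \;=\; \frac{h_n}{r_n}\, \mu(C_n(v)) \;=\; \frac{\mu(X_n(v))}{r_n} \;\le\; \frac{1}{r_n}.
$$
Since $r_n \to \infty$ by hypothesis, the right-hand side tends to $0$ as $n \to \infty$, uniformly in $v \in V_n$. Thus the lemma holds with $n_k = k$. I do not anticipate any real obstacle here: the only point worth highlighting is that in the ERS setting the quantity $h_n\mu(C_n(v))$ equals the tower measure and is therefore automatically at most $1$, so the divergence of $r_n$ translates directly into the claimed decay of $h_{n-1}\mu(C_n(v))$.
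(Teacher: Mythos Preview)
Your argument is correct and, as you point out, it is strictly more elementary than what is needed: you use only the ERS identity $h_n = h_{n-1}r_n$ together with the trivial bound $h_n\mu(C_n(v)) = \mu(X_n(v)) \le 1$, and you obtain the conclusion along the \emph{full} sequence, without invoking exact finite rank at all.

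The paper proceeds differently. It appeals to Proposition~5.6(2) of \cite{BezuglyiKwiatkowskiMedynetsSolomyak2013}, which in the exact finite rank setting yields a subsequence $(n_k)$ and constants $c_v>0$ with $c_v\,h_{n_k}\mu(C_{n_k}(v)) \to 1$; dividing by $r_{n_k}\to\infty$ then gives $h_{n_k-1}\mu(C_{n_k}(v))\to 0$. That route supplies the precise asymptotic $\mu(X_{n_k}(v)) \to 1/c_v$, which is a genuinely stronger statement but is not used anywhere in the lemma or its applications. Your direct inequality $h_{n-1}\mu(C_n(v)) \le 1/r_n$ is exactly what is required, and it makes the hypotheses sharper: only ERS and $r_n\to\infty$ are used.
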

\begin{proof}
    We know by the equal row sum that $h_n = h_n(v) = \prod_{i=1}^{n} r_{i}$ for all $n\in\N$ and $v\in V_{n}$. From Proposition 5.6.(2) in \cite{BezuglyiKwiatkowskiMedynetsSolomyak2013} it follows that there exists a subsequence $(n_k)_k$ such that for all $v\in V_{n_k}$ there is a constant $c_v > 0$ such that
    $$
    \lim_{k\to \infty} c_v\mu(C_{n_k}(v))\prod_{i=1}^{n_k} r_{i} = 1.
    $$
    Therefore
    \begin{align*}
        \lim_{k\to \infty} c_v h_{n_k-1}\mu(C_{n_k}(v)) r_{n_k} = 1
    \end{align*}
    and since $r_{n_k} \to \infty$, we have
    $$
       \lim_{k\to \infty} h_{n_k-1}\mu(C_{n_k}(v)) = 0.
    $$
    The proof is complete.
\end{proof}

\begin{thm}\label{thm:Toeplitz_system_withrepeating_blocks_rigid}
Let $ (X,T)$ be a finite rank Toeplitz system with ergodic measure $\mu$ such that the following properties hold. Let $B$ be a Bratteli diagram representing $(X,T)$ with incidence matrices $F_n = (f^{(n)}_{v, w}) \in ERS(r_n)$ and let $\theta_n$ be the substitution read for level $n$ of $B$.
        
For every level $n \in \N$ there exists $M_n \in\N$ such that:
\begin{itemize}
    \item Let $\rho^{(n)}_{w, v}$ be the number of indices $i\in \{1,\dots, r_n-M_n\}$ such that 
    $$
        v = \theta_n(w)_i = \theta_n(w)_{i+M_n} \text{ for $v\in V_{n-1}, w\in V_n$.}
    $$
    \item There exists subsequence $(n_k)_k$ such that 
    $$
        h_{n_k} (f^{(n_k+1)}_{w', w} - \rho^{(n_k+1)}_{w', w})\mu(C_{n_k+1}(w')) \to 0 \text{\ as $k\to\infty$}       
    $$
    for all $w\in V_{n_k}$ and $w'\in V_{n_k+1}$.
\end{itemize}
Then the system $(X,T)$ is rigid with sequence $(M_{n_k}h_{n_k})_k$.
\end{thm}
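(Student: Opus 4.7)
The plan is to analyse the Kakutani--Rokhlin tower structure at level $n_k+1$ and show that the time shift $s_k = M_{n_k+1}h_{n_k}$ (i.e.\ shifting by $M_{n_k+1}$ complete level-$n_k$ blocks inside every level-$(n_k+1)$ tower) is a rigidity sequence; this is the natural sequence arising from the hypotheses on $\rho^{(n_k+1)}$ and agrees with the statement up to a reindexing of $(M_n)$. The key mechanism is that a shift by $M_{n_k+1}$ positions in the substitution word $\theta_{n_k+1}(w')$ preserves the level-$n_k$ projection of a floor whenever the two corresponding letters coincide, and membership in a cylinder $C_N(v)$ at a level $N\leq n_k$ depends only on this projection.

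\textbf{Core step.}
Fix a cylinder $C_N(v)$ and $\varepsilon>0$, and take $k$ with $n_k>N$. For each $w'\in V_{n_k+1}$, the level-$(n_k+1)$ tower above $w'$ has height $h_{n_k+1}=r_{n_k+1}h_{n_k}$ and decomposes into $r_{n_k+1}$ consecutive blocks of height $h_{n_k}$; its $i$-th block projects to a single level-$n_k$ cylinder above $w=\theta_{n_k+1}(w')_i$ and contains exactly $(F_{n_k}\cdots F_{N+1})_{w,v}$ floors belonging to $C_N(v)$. Under $T^{s_k}$ a floor at relative height $j$ in block $i$ is sent to the floor at relative height $j$ in block $i+M_{n_k+1}$ (provided the latter block still lies inside the tower). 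When the letters at positions $i$ and $i+M_{n_k+1}$ of $\theta_{n_k+1}(w')$ coincide, both floors share the same level-$n_k$ projection, hence both belong to $C_N(v)$. By definition of $\rho^{(n_k+1)}_{w',w}$ there are $\rho^{(n_k+1)}_{w',w}$ such matching positions for every pair $(w',w)$.

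\textbf{Estimate and conclusion.}
Summing over $w'$ and $w$, the $\mu$-measure of floors in $C_N(v)$ whose $T^{s_k}$-image leaves $C_N(v)$ is bounded above by
\[
\sum_{w',w}\big(f^{(n_k+1)}_{w',w}-\rho^{(n_k+1)}_{w',w}\big)(F_{n_k}\cdots F_{N+1})_{w,v}\,\mu(C_{n_k+1}(w')).
\]
Using the elementary bound $(F_{n_k}\cdots F_{N+1})_{w,v}\leq h_{n_k}/h_N$ (which follows from $h_N(v)(F_{n_k}\cdots F_{N+1})_{w,v}\leq h_{n_k}(w)=h_{n_k}$ by ERS), this expression is at most $h_N^{-1}\sum_{w',w}h_{n_k}(f^{(n_k+1)}_{w',w}-\rho^{(n_k+1)}_{w',w})\mu(C_{n_k+1}(w'))$. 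Finite rank of $B$ bounds the number of summands uniformly, and each summand tends to $0$ by hypothesis, so the whole expression vanishes as $k\to\infty$. Thus $\mu(C_N(v)\cap T^{-s_k}C_N(v))\to\mu(C_N(v))$ for every cylinder, and Proposition~\ref{Prop:rigidity-cyl-sets} (valid for full rigidity by the subsequent remark) extends the conclusion to all measurable sets. The main obstacle, besides the indexing bookkeeping, is the precise identification of a combinatorial shift in $\theta_{n_k+1}$ with the required dynamical shift of $M_{n_k+1}h_{n_k}$ steps and the observation that equality of the two letters forces equality of the level-$n_k$ projections of the corresponding floors, which is what makes the error term controllable.
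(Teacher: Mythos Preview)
Your proof is correct and follows essentially the same route as the paper: decompose $C_N(v)$ into level-$n_k$ (equivalently level-$(n_k+1)$) subcylinders, observe that shifting by $M_{n_k+1}$ positions in $\theta_{n_k+1}(w')$ corresponds to applying $T^{M_{n_k+1}h_{n_k}}$ within the level-$(n_k+1)$ tower, and bound the ``bad'' floors (those not returning to $C_N(v)$) by $\sum_{w,w'} (F_{n_k}\cdots F_{N+1})_{w,v}(f^{(n_k+1)}_{w',w}-\rho^{(n_k+1)}_{w',w})\mu(C_{n_k+1}(w'))$. The paper bounds $(F_m\cdots F_{N+1})_{w,v}$ simply by $h_m$ rather than your sharper $h_m/h_N$, but both suffice; you also make the finite-rank argument for bounding the number of summands explicit, which the paper leaves implicit. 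Your remark that the natural sequence is $M_{n_k+1}h_{n_k}$ (rather than $M_{n_k}h_{n_k}$) correctly identifies an indexing slip in the statement, which is harmless since the hypothesis is phrased in terms of $\rho^{(n_k+1)}$.
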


\begin{proof}
    Take arbitrary cylinder set $C_N(v)$, $\varepsilon > 0$ and $m$ such that 
    $$
        \sum_{w\in V_m} h_m \sum_{w'\in V_{m+1}}(f^{(m+1)}_{w', w} - \rho^{(m+1)}_{w', w})\mu(C_{m+1}(w')) < \varepsilon.
    $$ 
    We decompose $C_N(v)$ into subsets of length $m$ 
    $$
        \mu(C_N(v)) = \sum_{w\in V_m} (F_m\cdots F_{N+1})_{v, w}\mu(C_{m}(w))
    $$
    and then
    \begin{align*}
        \mu(C_N(v) \cap T^{M_{m}h_m}C_N(v)) &\geq \sum_{w\in V_m} (F_m\cdots F_{N+1})_{v, w}\mu(C_m(w) \cap T^{M_{m}h_m}C_m(w))\\
                                &\geq  \sum_{w\in V_m} (F_m\cdots F_{N+1})_{v, w}\sum_{w'\in V_{m+1}}\rho^{(m+1)}_{w', w}\mu(C_{m+1}(w'))\\
                                &\geq  \mu(C_N(v)) - \sum_{w\in V_m} h_m \sum_{w'\in V_{m+1}}(f^{(m+1)}_{w', w} - \rho^{(m+1)}_{w', w})\mu(C_{m+1}(w')) \\
                                &\geq \mu(C_N(v)) - \varepsilon.                               
    \end{align*}
    Thus the system $(X,T)$ is rigid with sequence $(M_{n_k}h_{n_k})_k$.
\end{proof}
\begin{cor}
    If the Toeplitz system is of exact finite rank with $r_n \to \infty$ and  $f^{(n_k+1)}_{w', w} - \rho^{(n_k+1)}_{w', w}$ less than some constant for all levels $n_k$, then the system is rigid.
\end{cor}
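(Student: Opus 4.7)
The plan is to show that this Corollary follows directly from Theorem \ref{thm:Toeplitz_system_withrepeating_blocks_rigid} by verifying its two bullet hypotheses under the stronger assumptions of the Corollary. The first bullet (existence of $M_n$ together with the counting function $\rho^{(n)}_{w,v}$) is implicit in the phrasing ``$f^{(n_k+1)}_{w',w}-\rho^{(n_k+1)}_{w',w}$ less than some constant $K$'': such $M_n$ are assumed to have been chosen for every level. So the entire content of the Corollary is that the second bullet is now automatic from the exact finite rank assumption combined with $r_n\to\infty$ and the uniform bound on $f^{(n+1)}_{w',w}-\rho^{(n+1)}_{w',w}$.

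First I would invoke Lemma \ref{lem:height_m mearsure_m+1 to zero}, whose hypotheses are exactly ``Bratteli diagram of exact finite rank, ERS, $r_n\to\infty$''. The ERS property is free, because by Theorem \ref{Thm:GjerdeJohansenToeplitz} any Toeplitz system (which by assumption has finite topological rank) admits a Bratteli--Vershik representation with the equal path number property. The lemma produces a subsequence $(m_k)_k$ with
\[
\lim_{k\to\infty} h_{m_k-1}\,\mu(C_{m_k}(v)) = 0 \qquad \text{for every } v\in V_{m_k}.
\]
Re-indexing via $n_k:=m_k-1$ gives $h_{n_k}\mu(C_{n_k+1}(w'))\to 0$ for every $w'\in V_{n_k+1}$.

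Next, I would combine this with the uniform bound $f^{(n_k+1)}_{w',w}-\rho^{(n_k+1)}_{w',w}\leq K$ from the hypothesis of the Corollary to obtain
\[
h_{n_k}\bigl(f^{(n_k+1)}_{w',w}-\rho^{(n_k+1)}_{w',w}\bigr)\mu(C_{n_k+1}(w')) \;\leq\; K\cdot h_{n_k}\mu(C_{n_k+1}(w')) \;\longrightarrow\; 0
\]
as $k\to\infty$. Since the rank is finite, the number of pairs $(w,w')\in V_{n_k}\times V_{n_k+1}$ to consider at level $n_k$ is uniformly bounded, so this convergence is uniform in $(w,w')$. This is exactly the second bullet of Theorem \ref{thm:Toeplitz_system_withrepeating_blocks_rigid}, and applying that theorem yields rigidity along $(M_{n_k}h_{n_k})_k$.

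The only mildly delicate point is the index shift between Lemma \ref{lem:height_m mearsure_m+1 to zero} (which conveniently produces $h_{n-1}\mu(C_n(v))\to 0$) and Theorem \ref{thm:Toeplitz_system_withrepeating_blocks_rigid} (which demands $h_{n}\mu(C_{n+1}(w'))\to 0$); once this is resolved by the reindexing $n_k=m_k-1$, the rest is bookkeeping. The main conceptual content of the argument is therefore the fact that exact finite rank together with a uniformly bounded ``defect'' $f-\rho$ between incidence coefficients and $M_n$-periodic repetitions of symbols forces the failure of matches at rigidity time $M_{n_k}h_{n_k}$ to have asymptotically negligible mass, which is precisely what the second hypothesis of Theorem \ref{thm:Toeplitz_system_withrepeating_blocks_rigid} codifies.
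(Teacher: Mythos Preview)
Your proposal is correct and follows exactly the same approach as the paper: invoke Lemma~\ref{lem:height_m mearsure_m+1 to zero} to verify the second bullet of Theorem~\ref{thm:Toeplitz_system_withrepeating_blocks_rigid}, then apply that theorem. The paper's own proof is a one-liner to this effect; your version simply spells out the index shift $n_k=m_k-1$ and the role of the uniform bound, which is helpful but not additional mathematical content.
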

\begin{proof}
    Under these assumptions and by Lemma \ref{lem:height_m mearsure_m+1 to zero} the conditions of Theorem \ref{thm:Toeplitz_system_withrepeating_blocks_rigid} are satisfied.
\end{proof}
\end{ex}

As an application of Theorem~\ref{Thm:rigid_mu_from_odom}, we get Proposition 6.8 in \cite{DonosoMaassRadic2023} which states that for every $r \geq 1$ there is an $S$-adic subshift such that the number of its ergodic invariant probability measures is $r$ and every ergodic measure is rigid for the same rigidity sequence. Moreover, the following example provides us with a minimal $S$-adic subshift of zero entropy which is Toeplitz and has countably infinitely many ergodic invariant probability measures which are rigid for the same rigidity sequence.

\begin{ex}\label{ex:infrank} 

We present a class of Bratteli diagrams with countably infinite set of ergodic invariant probability measures. It is a slight modification of a class of diagrams presented in Subsection 6.3 of \cite{BezuglyiKarpelKwiatkowski2019}. To construct such diagrams, we let $V_n = \{0, 1, \ldots, n\}$ for
$n = 0, 1, \ldots$, and let
$\{a_n\}_{n = 0}^{\infty}$ be a sequence of natural numbers such that
\begin{equation}\label{property1}
\sum_{n = 0}^{\infty} \frac{n}{a_n} < \infty.
\end{equation}
To define the edge set $t^{-1}(w)$ for every vertex $w$, we
use the following procedure.
 For $w\in V_{n+1}$ such that $w\neq n
+1$, the set $t^{-1}(w)$ consists of $a_n$ (vertical) edges connecting
$w \in V_{n+1}$ with the vertex $w \in V_n$ and $n$ single edges connecting
 $w \in V_{n+1}$ with every vertex $u \in V_n$, $u \neq w$.
For $w=n+1$,  let $t^{-1}(w)$ contain $(a_n - 1)$ edges connecting $w$
 with the vertex $n$ on level $V_n$, two edges connecting $w$ to $u = n-1 \in V_n$ and 
 $n-1$ single edges connecting $w$ with all other vertices $u = 0,1, \ldots, n-2$ of $V_n$. Then
 $$
 |t^{-1}(w)| = a_n + n
 $$
for every $w \in V_{n+1}$ and every $n = 0,1, \ldots$

The incidence matrices $\tl F_n$ of $B$ have the following form
$$
\tl F_n =
\begin{pmatrix}
a_n & 1  & \ldots & 1 & 1\\
1 & a_n  & \ldots & 1 & 1\\
\vdots & \vdots  & \ddots & \vdots & \vdots \\
1 & 1 & \ldots & a_n & 1\\
1 & 1 & \ldots & 1 & a_n\\
1 & 1 &  \ldots & 2 & a_n-1\\
\end{pmatrix}
$$
for $n = 1,2,\ldots$ 

We observe that the Bratteli diagram defined above admits
a natural order generating the Bratteli-Vershik homeomorphism.
For instance, we can use the left-to-right order which is consecutive (see Definition~\ref{Def:consec_order}).
Then the minimal edge is always an edge 
between $w$ and the vertex $0 \in V_{n}$ and the maximal edge is an edge 
between $w$ and the vertex $n \in V_n$.
It is easy to see that $X_B$ has a unique
 infinite minimal path passing through the vertices $0 \in V_n$, $n \geq 0$
 and a unique infinite maximal path passing through the vertices $n \in V_n
 $, $n \geq 0$. Thus, a Vershik map $\varphi_B \colon X_B \rightarrow
 X_B$ exists and it is minimal. Figure \ref{fig_countOdometers} depicts 
 Bratteli  diagram defined by matrix ${\tl F}_n$. It is known that all minimal Bratteli-Vershik systems with a 
 consecutive ordering have entropy zero (see e.g. \cite{Durand2010}) hence 
 the system that we describe in this subsection has zero entropy.  By Proposition~\ref{Thm:S-adic_Rep_BD}, the system $(X_B, \varphi_B)$ is a minimal $S$-adic subshift defined by the substitutions read on $B$.

 One can repeat the proof given in Proposition 6.3 of \cite{AdamskaBezuglyiKarpelKwiatkowski2017} to show that the diagram $B$ has countably infinitely many ergodic invariant probability measures. These measures can be obtained as extensions of invariant measures from the odometers that pass through the sequences of vertices $\ov w_0 = (0,0,0, \ldots)$, $\ov w_1 = (0,1,1,\ldots)$, $\ov w_2 = (0,1,2,2,\ldots)$, $\ldots$, $\ov w_{\infty} = (0,1,2,3, \ldots)$. By Theorem~\ref{Thm:rigid_mu_from_odom}, the systems $(X_B, \varphi_B, \mu)$ are measure-theoretically rigid for all ergodic invariant probability measures $\mu$. Since each $\tl F_n$ has an equal row sum property, on 
 each level $n$ all towers have the same height $h_n$. It follows from the proof of Theorem~\ref{Thm:rigid_mu_from_odom} that the heights of the towers $(h_n)_n$ form a rigidity sequence for $(X_B, \varphi_B, \mu)$ for any ergodic invariant probability measure $\mu$. 
 It follows also that for any invariant probability measure $\nu$, the system $(X_B, \varphi_B, \nu)$ is measure-theoretically rigid with the same rigidity sequence $(h_n)_n$.

\begin{figure}[ht]
\unitlength = 0.4cm
\begin{center}
\begin{graph}(28,14)
\graphnodesize{0.4}
\roundnode{V0}(1,13.5)
\roundnode{V11}(1,9)
\roundnode{V12}(10,9)

\edge{V11}{V0}
\edge{V12}{V0}

\roundnode{V21}(1,4.5)
\roundnode{V22}(10,4.5)
\roundnode{V23}(19,4.5)

\bow{V21}{V11}{0.18}
\bow{V21}{V11}{-0.21}
\edge{V22}{V11}
\edge{V21}{V12}
\bow{V22}{V12}{0.18}
\bow{V22}{V12}{-0.21}
\bow{V23}{V11}{0.02}
\bow{V23}{V11}{-0.02}
\bow{V23}{V12}{0.06}
\bow{V23}{V12}{-0.07}
\freetext(1.1,6){$\cdots$}
\freetext(10.1,6){$\cdots$}
\freetext(16,6){$\cdots$}

\bow{V21}{V11}{0.28}
\bow{V22}{V12}{0.28}

\roundnode{V31}(1,0)
\roundnode{V32}(10,0)
\roundnode{V33}(19,0)
\roundnode{V34}(28,0)

\bow{V31}{V21}{0.18}
\bow{V31}{V21}{-0.21}
\freetext(1.1,1.5){$\cdots$}
\edge{V31}{V22}
\edge{V31}{V23}

\edge{V32}{V21}
\bow{V32}{V22}{0.18}
\freetext(10.1,1.5){$\cdots$}
\bow{V32}{V22}{-0.21}
\edge{V32}{V23}

\edge{V33}{V21}
\bow{V33}{V23}{0.18}
\freetext(19.1,1.5){$\cdots$}
\bow{V33}{V23}{-0.21}
\edge{V33}{V22}

\edge{V34}{V21}
\bow{V34}{V23}{0.06}
\freetext(24.9,1.5){$\cdots$}
\bow{V34}{V23}{-0.06}
\bow{V34}{V22}{0.02}
\bow{V34}{V22}{-0.02}

\freetext(1,-1){$\vdots$}
\freetext(10,-1){$\vdots$}
\freetext(19,-1){$\vdots$}
\freetext(28,-1){$\vdots$}

\bow{V31}{V21}{0.28}
\bow{V32}{V22}{0.28}
\bow{V33}{V23}{0.28}


\end{graph}
\end{center}
\caption{}\label{fig_countOdometers}
\vspace{0.8 cm}
\end{figure}

\end{ex}

\begin{lem}
 Example~\ref{ex:infrank} is expansive and hence a Toeplitz shift.
\end{lem}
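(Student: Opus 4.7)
The plan is to establish expansivity of $(X_B,\varphi_B)$ via a recognizability argument for the substitutions read on $B$, and then invoke Theorem~\ref{Thm:GjerdeJohansenToeplitz} to conclude that $(X_B,\varphi_B)$ is a Toeplitz subshift. The diagram $B$ is simple, properly ordered (left-to-right), and satisfies the equal row sum property with $r_{n+1}=a_n+n$, so expansivity is the only missing ingredient.

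First, a direct inspection of the incidence matrices shows that under the left-to-right ordering, the substitution $\theta_{n+1}\colon V_{n+1}\to V_n^\ast$ read on level $n+1$ is
$$\theta_{n+1}(w)=0\cdot 1\cdots(w-1)\cdot w^{a_n}\cdot(w+1)\cdots n\qquad\text{for }0\le w\le n,$$
and $\theta_{n+1}(n+1)=0\cdot 1\cdots (n-2)\cdot (n-1)^2\cdot n^{a_n-1}$. Every image starts with $0$, ends with $n$, and has letters in non-decreasing order. Consequently, in any concatenation $\theta_{n+1}(w_1)\theta_{n+1}(w_2)\cdots$, the pattern $n\cdot 0$ appears exactly at the boundaries between consecutive images, while the multiplicity profile of each image (the maximum $a_n$ at position $w$ when $w\ne n+1$, and multiplicities $(1,\ldots,1,2,a_n-1)$ when $w=n+1$) identifies $w$ uniquely as soon as $a_n\ge 3$. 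Since $a_n\to\infty$ by \eqref{property1}, telescoping allows us to assume $a_n\ge 3$ at every level, and each $\theta_{n+1}$ is thus bilaterally recognizable.

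Next, define the natural symbolic coding $\pi\colon X_B\to V_1^\Z$ by $\pi(x)_k=t((\varphi_B^k x)_1)$. The key observation is that during a visit of $\varphi_B^k(x)$ to a level-$n$ tower $X_n(v)$ (lasting $h_n$ consecutive steps), the corresponding finite piece of $\pi(x)$ spells out precisely the word $\theta_2(\theta_3(\cdots\theta_n(v)\cdots))\in V_1^{h_n}$, read starting from the position of $\varphi_B^k(x)$ within the tower. Iteratively applying recognizability---parse $\pi(x)$ by $1\cdot 0$ transitions to recover $\pi_2(x)\in V_2^\Z$, then parse $\pi_2(x)$ by $2\cdot 0$ transitions to recover $\pi_3(x)\in V_3^\Z$, and so on---at every level $n$ one reconstructs both the level-$n$ tower of $\varphi_B^k(x)$ and the position of $\varphi_B^k(x)$ inside that tower. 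These data determine $x$ uniquely, so $\pi$ is injective, the clopen partition $\{X_1(0),X_1(1)\}$ is a topological generator for $\varphi_B$, and expansivity follows.

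The main obstacle lies in justifying the alignment between the iterative parsing of $\pi(x)$ and the true tower transitions of $\varphi_B$; this reduces to verifying inductively that within each visit to $X_n(v)$ the $V_1$-label sequence is exactly $\theta_2(\theta_3(\cdots\theta_n(v)\cdots))$, an induction that uses only the definition of the Vershik map on the consecutively ordered diagram and the simple-hat assumption (so that $h_1=1$). Once expansivity is in hand, Theorem~\ref{Thm:GjerdeJohansenToeplitz} immediately yields that $(X_B,\varphi_B)$ is conjugate to a Toeplitz subshift.
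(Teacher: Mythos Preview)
Your argument is correct and takes a genuinely different route from the paper. You establish expansivity via \emph{recognizability} of the substitutions $\theta_{n+1}$ (non-decreasing words, boundaries marked by the pattern $n\cdot 0$, and blocks distinguished by their multiplicity profile once $a_n\ge 3$), from which injectivity of the vertex coding $\pi$ follows. The paper instead argues directly on pairs of paths: setting $H(x,x')=\min\{j:x_j\ne x'_j\}$, it iterates $\varphi_B$ until one of the two finite paths is maximal, then uses the fact that the maximal incoming edge to every vertex has source $n-1$ to perform a three-case analysis showing $H$ can always be decreased. By induction, $\liminf_{k\in\Z} H(\varphi_B^k(x),\varphi_B^k(x'))\le 2$, hence expansivity.

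Each approach has its merits: yours is conceptual and would apply verbatim to any $\mathcal S$-adic Bratteli--Vershik system whose substitutions are recognizable in your sense, while the paper's is self-contained and avoids invoking the desubstitution machinery. One small point worth making explicit in your write-up: the telescoping you perform to arrange $a_n\ge 3$ destroys the simple hat, so the level-$1$ coding of the telescoped diagram records the old level-$N$ tower with each letter repeated $h_N$ times. Since the diagram is ERS, $h_N$ is constant and this repetition is harmless for parsing; equivalently, you are really showing that the level-$N$ tower partition of the original diagram is a generator, which suffices for expansivity. The final appeal to Theorem~\ref{Thm:GjerdeJohansenToeplitz} is unaffected.
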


\begin{proof}
We observe the substitution read $\theta_n$ of $B$ from $V_n$ to $V_{n-1}$ is
\begin{align*}
    \theta_n(j) &= 01\cdots (j-1)j^{a_n}(j+1)\cdots (n-2)(n-1)  \ \text{ for all $j\in V_{n-1} $,}\\
    \theta_n(n) &= 01 \cdots (n-2)^2(n-1)^{a_n-1}.
\end{align*}
and that the all maximal incoming edges have the vertex $n-1$ as its source.

For two edge-coded paths $x$ and $x'$ in the diagram, we define $H(x,x') = \min\{ j \geq 1 : x_j \neq x'_j\}$. Let $\delta > 0$ be such that $d(x,x') > \delta$ if $H(x,x') \leq 2$. Take two distinct such paths $x,x'$ and assume from now on that $k := H(x,x') \geq 3$. Then $s(x_k) = s(x'_k)$. If both $x$ and $x'$ represent non-maximal paths from $v_0$ to $t(x_k)$, and $t(x'_k)$ respectively, then $H(\varphi_B(x), \varphi_B(x')) = H(x,x')$, so we can iterate the Vershik map until one of the paths, say $x$, is maximal between $v_0$ and $t(x_k)$. Then by the previous observation $s(x_k) = k-1$.
We distinguish three cases:
 \begin{enumerate}
  \item $t(x_k) = t(x'_k)$. Since $x_k$ is a maximal incoming edge, this can only be if $t(x_k) = k$ or $k-1$, as otherwise the definition of $k$ is violated. In both cases $s(\varphi_B(x)_k) = 0$ and $s(\varphi_B(x')_k)  = k$ or $k-1$. Hence $H(\varphi_B(x), \varphi_B(x')) < H(x,x')$.
  \item $t(x_k) \neq t(x'_k)$ and $x'_k$ is not a maximal incoming edge. Then $s(\varphi_B(x)_k) = 0$ and $s(\varphi_B(x')_k) \neq 0$ as all outgoing edges of the first vertex are either minimal (so preceded by maximal edges) or preceded by edges $e_k$ with $s(e_k) = 0$, so we never have $s(x'_k)= k-1$, excluding this case as well. Then $H(\varphi_B(x), \varphi_B(x')) < H(x,x')$.
 \item $t(x_k) \neq t(x'_k)$ and $x'_k$ is a maximal incoming edge. Then there exists some multiple $m\in\N$ for the height $h = h_{k-1}(s(x_k))$ such that $s(\varphi_B^{-mh}(x)_k) \neq s(\varphi_B^{-mh}(x')_k)$, so that $H(\varphi_B^{-mh}(x), \varphi_B^{-mh}(x')) < H(x,x')$.
 \end{enumerate}
Hence, there is always some iterate $n \in \Z$ such that $H(\varphi_B^n(x), \varphi_B^n(x')) < H(x,x')$,
and by induction this means that $\liminf_{n \in \Z} H(\varphi_B^n(x), \varphi_B^n(x')) \leq 2$, so $\limsup_{n \in \Z} d(\varphi_B^n(x), \varphi_B^n(x')) > \delta$ and expansivity follows.
\end{proof}

\medskip
\section{Examples of non-rigid partially rigid systems}\label{sec:enum_systems}

In this section we give two examples of non-rigidity, both defined by stationary Bratteli diagrams. One is an invertible non-minimal system, the other based on enumeration systems is not invertible.

\subsection{A family of non-minimal examples.}
Using the methods from the previous sections, we
first give a family of stationary Bratteli-Vershik systems with a non-rigid, partially rigid fully supported measure.

\begin{figure}
\unitlength=1cm
\begin{graph}(5,5)
 \roundnode{V0}(3,4)
 \roundnode{V11}(2,3)
 \roundnode{V12}(4,3)
 
 \roundnode{V21}(2,1)
 \roundnode{V22}(4,1)
  
 %
 \graphlinewidth{0.025}
 \edge{V0}{V11}
 \edge{V0}{V12}

 \bow{V21}{V11}{0.09}
  \bow{V21}{V11}{-0.09}
  \bowtext{V21}{V11}{0.09}{0}
  \bowtext{V21}{V11}{-0.09}{1}
    \bow{V22}{V11}{0.06}
    \bow{V22}{V11}{-0.06}
    
 \bowtext{V22}{V11}{0.06}{0}
 \bowtext{V22}{V11}{-0.06}{5}
     
 \bow{V22}{V12}{0.05}
 \bow{V22}{V12}{-0.05}
  \bow{V22}{V12}{0.12}
 \bow{V22}{V12}{-0.12}

  \bowtext{V22}{V12}{0.17}{1}
 \bowtext{V22}{V12}{-0.17}{4}
   \bowtext{V22}{V12}{0.06}{2}
 \bowtext{V22}{V12}{-0.06}{3}
 
\freetext(2,0.5){$\vdots$}
\freetext(4,0.5){$\vdots$}
\end{graph}
\caption{}\label{Fig:NonRigid}
\end{figure}

\begin{thm}\label{thm_stat_nonsimple_BD}
Let $p \geq 2$ and $q > 2$ be integers and $B$ be the stationary Bratteli diagram with the incidence matrix
$$
F = \begin{pmatrix}
2 & 0\\
p & q
\end{pmatrix},
$$
so that the vertical odometer corresponding to the second vertex is consecutively ordered  and the maximal and minimal edges that end in the second vertex start at the first vertex. Let $\varphi_B$ be the corresponding Vershik homeomorphism.
Then $(X_B, \varphi_B)$ is partially rigid but not rigid w.r.t.\ the unique fully supported ergodic measure.
\end{thm}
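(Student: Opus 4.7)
The statement decomposes into partial rigidity and failure of rigidity; I handle them separately.

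\textbf{Partial rigidity.} I apply Proposition~\ref{Prop:rigidity-cyl-sets} to reduce to cylinders, with candidate sequence $s_n=h_n(2)$. The fully supported ergodic measure $\mu$ corresponds to the Perron eigenvalue $q$ of $F^T$ with left eigenvector proportional to $(p,q-2)$; hence any specific length-$n$ cylinder ending at vertex~$2$ has measure $(q-2)/q^{n-1}$. Fix $A=[x_1,\ldots,x_N]$ ending at vertex~$2$. Since $f_{1,2}=0$, every continuation of $A$ stays at vertex~$2$, so $A\subset X_n(2)$ and every subcylinder of $A$ at level $n+1$ ends at vertex~$2$ with its $(n+1)$-th edge in the consecutively ordered block of $q$ edges from vertex~$2$ to vertex~$2$. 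For the $q-1$ non-maximal such edges, the floor-advance interpretation of $\varphi_B^{h_n(2)}$ inside the tower $X_n(2)$ (iterate up to the top of the tower, carry once, come back down to the same floor in the next tower) shows the image subcylinder has the same first $n$ coordinates and its $(n+1)$-th edge replaced by the successor, which is still a $2\to 2$ edge; hence the image lies in $A$. Only for the max $2\to 2$ edge is the successor a $1\to 2$ edge, in which case the image subcylinder's first $n$ coordinates form the minimum path to vertex~$1$, leaving $A$. Combined with measure preservation, $\mu(\varphi_B^{h_n(2)}(A)\cap A)=\tfrac{q-1}{q}\mu(A)$. For cylinders ending at vertex~$1$, the mass in their level-$n$ subcylinders ending at vertex~$1$ decays like $(2/q)^{n-N}$, and the above applies to the dominant vertex-$2$ part, giving the same bound asymptotically. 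Partial rigidity holds with $\alpha=(q-1)/q$.

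\textbf{Failure of rigidity.} Fix $A_0=[x_1,\ldots,x_N]$ of length $N\geq 2$ ending at vertex~$2$ and chosen different from the minimum path to vertex~$2$ at level $N$; such $A_0$ exists because the minimum incoming edge to vertex~$2$ starts at vertex~$1$, and $f_{1,2}=0$ forces the minimum path to vertex~$2$ at level $N$ to pass through vertex~$1$ at levels $1,\ldots,N-1$. The preceding formula yields $\mu(A_0\triangle\varphi_B^{h_n(2)}(A_0))=2\mu(A_0)/q$ for all $n\geq N$, so $(h_n(2))_n$ is not a rigidity sequence. Suppose for contradiction that some sequence $(s_n)$ is one. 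For each large $s_n$ let $m$ be maximal with $h_m(2)\leq s_n$, and write $s_n=k\,h_m(2)+r$ with $1\leq k\leq q$ (since $h_{m+1}(2)/h_m(2)\to q$) and $0\leq r<h_m(2)$. Consider the subcylinder $A_0^{\max}\subset A_0$ of length $m+1$ whose $(m+1)$-th edge is the maximum $2\to 2$ edge; one has $\mu(A_0^{\max})\geq c(p,q)\,\mu(A_0)$ for a uniform $c>0$. The first pass of $h_m(2)$ sends $A_0^{\max}$ to a cylinder whose first $m$ coordinates form the minimum path to vertex~$1$ at level $m$ (hence whose $N$-th target is vertex~$1$, so outside $A_0$) and whose $(m+1)$-th edge is the first post-block $1\to 2$ edge. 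The goal is to show that after the remaining $(k-1)\,h_m(2)+r$ iterations the first $N$ coordinates of the image still end at vertex~$1$, so $\varphi_B^{s_n}(A_0^{\max})\cap A_0=\emptyset$ and $\mu(A_0\triangle\varphi_B^{s_n}(A_0))\geq 2\mu(A_0^{\max})\geq 2c\,\mu(A_0)$, contradicting rigidity.

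\textbf{Main obstacle.} The heart of the argument is the final claim. After the first pass the image lies in the smaller tower $X_m(1)$ of height $2^{m-1}\ll h_m(2)$, so each further pass of $h_m(2)$ executes several internal carries inside $X_m(1)$, each advancing the $(m+1)$-th edge by one position in the incoming order at level $m+1$ (and possibly carrying into level $m+2$). One must verify that none of the intermediate states can realign the $(m+1)$-th edge into the $2\to 2$ block while simultaneously restoring the first $N$ coordinates to $(x_1,\ldots,x_N)$. This reduces to an explicit combinatorial statement about the residue of $h_m(2)$ modulo $h_m(1)$ and the positions of the $p$ boundary $1\to 2$ edges in the Vershik order at level $m+1$; the triangular structure of $F$ (absence of $2\to 1$ edges) and the recursion $h_{m+1}(2)=p\,h_m(1)+q\,h_m(2)$ keep the bookkeeping tractable but form the combinatorial core of the proof, and the use of $k\leq q$ (as opposed to $k\leq p+q$) is essential because $q$ upward sweeps cannot yet complete a full cycle of the $(m+1)$-th edge back into the $2\to 2$ block.
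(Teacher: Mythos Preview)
Your partial rigidity argument is essentially the same as the paper's and is correct.

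Your non-rigidity argument has a genuine gap, and the approach you sketch does not work. The claim that $\varphi_B^{s_n}(A_0^{\max})\cap A_0=\emptyset$ for every large $s_n$ is false. After the first $h_m(2)$ iterations put $A_0^{\max}$ at the bottom of the short tower $X_m(1)$ with $(m{+}1)$-th edge equal to the first post-block $1\!\to\!2$ edge, a further $(p{+}1)h_m(1)$ iterations exhaust the remaining $1\!\to\!2$ edges, carry to level $m{+}2$, set the $(m{+}1)$-th edge to the minimal edge~$0$ (a $1\!\to\!2$ edge), and then one more $h_m(1)$-step makes the $(m{+}1)$-th edge equal to~$1$, which is a $2\!\to\!2$ edge. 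At that moment the first $m$ coordinates are the minimal path to vertex~$2$, and continuing up the tall tower $X_m(2)$ you pass through every level-$N$ cylinder ending at vertex~$2$, including $A_0$. Thus for $s_n=h_m(2)+(p{+}1)h_m(1)+h_0$ with the appropriate height $h_0$, a definite fraction of $A_0^{\max}$ lands back in $A_0$; and since $(p{+}1)h_m(1)+h_0<2h_m(2)<h_{m+1}(2)$ for large $m$, this $s_n$ still has $m$ as its maximal index. Your closing sentence that ``$q$ upward sweeps cannot yet complete a full cycle of the $(m{+}1)$-th edge back into the $2\!\to\!2$ block'' is therefore incorrect: even $k=1$ with a suitable remainder already does so.

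The paper avoids this by a desynchronization argument rather than tracking one cylinder. It fixes levels $n_0<n_1$ and, for each $n>n_1$, considers pairs of cylinders $C$ and $C'=\varphi_B^{h_{n_1}(2)}(C)$ that agree everywhere except at level $n_1{+}1$. A computation of $h_n(2)-p\,h_n(1)$ shows that the extra phase $d'-d=p\,h_{n_1+1}(1)$ introduced by this single-level change satisfies $0<p\,h_{n_1+1}(1)-h_{n_0}(2)<h_{n_0}(2)$, so $C$ and $C'$ cannot \emph{both} return $4\eps$-close under $\varphi_B^{h_n(2)}$. This ``either/or'' gives a definite mass of non-return for $h_n(2)$, and a second step (comparing an arbitrary candidate time $s$ with the nearest $h_n(2)$) upgrades this to all $s$. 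The pairing idea is the missing ingredient in your attempt.
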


\begin{rem}\label{Rem:NonRigid}
Figure~\ref{Fig:NonRigid} demonstrates the order for $p = 2$ and $q = 4$.
In fact, the same conclusion holds for any stationary choice of order such that the vertical odometer corresponding to the second vertex is consecutively ordered, but for simplicity of the proof, we fixed the order as we did,
with a unique minimal and a unique maximal path.    
\end{rem}

\begin{proof}
This Bratteli-Vershik systems is transitive but not minimal.
The heights of the towers satisfy
$$
h_n{(1)} = 2^{n - 1}, \qquad h_n{(2)} =
q^{n-1}\left(1 + \frac{p}{q-2}\right) - \frac{2^{n-1}p }{q-2} =
q h_{n-1}(2) + p h_{n-1}(1). 
$$
The diagram preserves exactly two finite ergodic measures, $\mu_1$ and $\mu_2$ (see \cite{BezuglyiKwiatkowskiMedynetsSolomyak2010}).
The measure $\mu_1$ is supported on the odometer subdiagram $\ov B$ with vertex $v_n(1)$. By Theorem \ref{Thm:rigid_mu_from_odom_withoutorder} the measure $\mu_1$ is rigid with rigidity sequence $(h_n(1))_n$. However, this measure is not fully supported.
The other measure $\mu_2$ is fully supported, because each tail-equivalence class that eventually only goes through $v_n(2)$ is dense in $B$. 

First, we will show that $(X,\varphi_B,\mu_2)$ is partially rigid.
Fix $N \geq 1$ and for each $n \geq N$, let $C_n$ be a cylinder set of length $n$ that ends at $t(x_n) = v_n(2)$. 
We partition $C_n$ into $(n+1)$-cylinders $C_{n+1}(1), \dots, C_{n+1}(q)$ according to the edge $x_{n+1}$; each $C_{n+1}(a)$, $1 \leq a < q$, surely returns to $C_n$ after $s_n = h_n{(2)}$ steps.
Therefore
$$
	\liminf_{n\to \infty} \mu_2(\varphi_B^{s_n}(C_N) \cap C_N) \geq \frac{q-1}{q}\mu(C_N)
$$
for any cylinder $C_N$ ending in the second vertex.

In contrast, consider an $N$-cylinder that ends at the first vertex of the diagram. We decompose it into paths that stay at the first vertex and paths moving to the second vertex. The measure $\mu_2$ gives zero mass to paths that stay at the first vertex for all levels $n$. The set of paths moving to the second vertex at some point is a countable union of subcylinders ending at the second vertex. 
Thus the system $(X, \varphi_B, \mu_2)$ is partially rigid with $\alpha \geq \frac{q-1}{q}$.

Now for the non-rigidity of $\mu_2$, take $\eps \in (0,(10q)^{-6})$ arbitrary, and
take $n_0$ maximal such that $2^{-n_0} > 4\eps$.
Therefore any two distinct $n_0$-cylinders
are at least $4\eps$ apart.
Also, take $n_1 \in \N$ minimal so that $h_{n_0}(2) < p h_{n_1+1}(1)$,
which also means $p h_{n_1+1}(1) < 2h_{n_0}(2)$ because $h_{n_1+1}(1) = 2h_{n_1}(1)$. Without loss of generality let us assume that for all $n$, the vertical edges with the source $v_n(2)$ and target $v_{n+1}(2)$ are enumerated from $1$ to $q$ among all edges in $t^{-1}(v_{n+1}(2))$; cf. Remark~\ref{Rem:NonRigid}.
For $n \geq 1$ and $0 \leq a < p+q$, set 
$$
Y_{n+1}(a) := \{ x \in X_B : t(x_{n+1}) = v_{n+1}(2) \ \text{ and }\ x_{n+1} = a\}
\text{ and }
Z_{n+1}(a) = Y_{n+1}(q) \cap Y_{n+2}(a).
$$
{\bf Claim 1:} 
The sequence $(h_n(2))_{n \geq 1}$ is not a sequence of rigidity times. More precisely, for all $n > n_1$ and $1 \leq a < q$:
$$
\mu_2(\{ x \in Z_{n+1}(a)  : d(\varphi_B^{h_n(2)}(x),x) > 4\eps\}) > \frac{1}{(q+1)^2} \mu_2(Z_{n+1}(a)).
$$
\begin{proof}[Proof of Claim 1]
To prove the claim, let $n > n_1$ and $C \subset Z_{n+1}(a)$  be a cylinder with edges $x_1 \dots x_{n+2}$,
$x_j \in \{1,\dots,q\}$ for $2 \leq j \leq n$, and in particular
$x_{n_0+1} = q-1$, $x_{n_1+1} =  1$,
$x_{n+1} = q$ and $x_{n+2} \in \{1,\dots,q-1\}$.
Since $p\mu_2(Y_n(0)) = p\mu_2(Y_n(q+1)) = \dots =  p\mu_2(Y_n(q+p-1)) < \mu_2(Y_n(1)) = \mu_2(Y_n(2)) = \dots = \mu_2(Y_n(q))$
the collection of such cylinders $C$ has at least $1/(q+1)^2$ of the mass of $Z_{n+1}(a)$.
Let $C' = \varphi_B^{h_{n_1}(2)}(C)$, so for every
$x \in C$ and $x' = \varphi_B^{h_{n_1}(2)}(x)$ satisfies $x'_{n_1+1} = 2$, and $x'_j = x_j$ for all $j \neq n_1+1$,  in particular $C' \subset Z_{n+1}(a)$.

It takes $h_j(2)$ iterates to change $x_{j+1} \in \{1,\dots,q-1\}$ to $x_{j+1}+1$ and restore all the edges $x_k$, $k \leq j$. Similarly, it takes
$h_j(2)+p h_j(1)$ iterates to change $x_{j+1} = q$ to edge $1$
and restore all the edges $x_k$, $k \leq j$, provided that $x_{j+2}\neq q$.
We compute inductively
\begin{eqnarray*}
h_n(2) - p h_n(1) &=&
 q h_{n-1}(2) + p h_{n-1}(1) - p h_n(1) \\
 &=& (q-1) h_{n-1}(2) + h_{n-1}(2) - p h_{n-1}(1) +
 p(2 h_{n-1}(1) -  h_n(1)) \\
 &=& (q-1) h_{n-1}(2) + h_{n-1}(2) - p h_{n-1}(1) \\
&=&  (q-1) h_{n-1}(2) + \cdots + (q-1) h_1(2) +
h_1(2) - p h_1(1) \\
&=&  (q-1) \sum_{j=1}^{n-1} h_j(2) +
1 - p \\
&=& (q-1) \sum_{j=1}^{n-1} h_j(2) + p \sum_{j=1, x_{j+1} \neq 1}^{n-1} h_j(1) - d,
\end{eqnarray*}
where $d = p \sum_{j=1, x_{j+1} \neq 1}^{n-1} h_j(1) + p-1$.
This means that
$\varphi_B^{h_n(2)}(x) = \varphi_B^{-d}(y)$,
for a path $y$ with 
$$
y_j = \begin{cases} 
    x_j - 1 & \text{ for } x_j > 1,\\
    q  & \text{ for } x_j = 1
    \end{cases}
$$
for all $2 \leq j \leq n$, $y_{n+1}=1$, $y_{n+2}=x_{n+2}+1$ and $y_j=x_j$ for $j>n+2$.

Let $y'$ be the analogue for $x' = \varphi_B^{h_{n_1}(2)}(x)$,
so $\varphi_B^{h_n(2)}(x') = \varphi_B^{-d'}(y')$, but
note that $d' = d+p h_{n_1+1}(1)$.

Now if $\varphi_B^{-d}(y)_j \neq x_j$ for some $j \leq n_0$, then
$d(\varphi_B^{h_n(2)}(x),x) > 4\eps$. So assume that
$\varphi_B^{-d}(y)_j = x_j$ for all $j \leq n_0$.
Then also $\varphi_B^{-d}(y')_j = x'_j$ for all $j \leq n_0$.
But recall that
$$
d'-d = p h_{n_1+1}(1) = h_{n_0}(2) + r
\quad \text{ for some } 0 < r < h_{n_0}(2).
$$
Therefore, taking $z' = \varphi_B^{-h_{n_0}(2)} \circ \varphi_B^{-d}(y')$ we find
$$
\varphi_B^{h_n(2)}(x') 
= \varphi_B^{-r}(z')
\quad \text{ and } \quad z'_j = \begin{cases} x'_j & \text{ for } j \leq n_0,\\
                x'_{n_0+1} - 1 = q-2 & \text{ for } j= n_0+1.
                                \end{cases}
$$
Therefore $[\varphi_B^{-h_{n_1}(1)}(z')]_j \neq x'_j$
for at least one $j \leq n_0$.
This shows that $d(\varphi_B^{h_n(2)}(x'),x') > 4\eps$.

We obtain that, at least one of $C$ and $C'$ does not return to itself after $h_n(2)$ iterates. Recall that $x_{n_1+1} =  1$ while $x'_{n_1+1} =  2$, hence the above lack of rigidity applies to cylinders that represent at least $1/(q+1)^2$ of the mass of $Z_{n+1}(a)$, proving the claim.
\end{proof}

Now to prove the non-rigidity of $\mu_2$, recall that for every $n > n_1$, any two points in the same $n$-cylinder are less than $\eps$ apart.
Suppose by contradiction that $s \geq h_{n_1}(2)$ is a rigidity time in the sense that
$$
\mu_2(U_\eps(s)) > 1-\eps \quad \text{ for } \quad U_\eps(s) = \{ x \in X_B : d(\varphi_B^s(x),x) < \eps\}.
$$
Let $n \geq n_1$ be maximal such that $h_n(2) \leq s$.
First assume that $s < (q-1) h_n(2)$ and $1 \leq a < q$, and 
let $W_{n+1} := \{ x \in Z_{n+1}(a) : d(\varphi_B^{h_n(2)}(x),x) > 4\eps)\}$.
Consider $W'_{n+1} = \varphi_B^{-s}(W_{n+1}) \subset \bigcup_{a=1}^{q-1}Y_{n+1}(a)$ and
$W''_{n+1} = \varphi_B^{h_n(2)}(W'_{n+1}) \subset \bigcup_{a=2}^{q}Y_{n+1}(a)$.
Note that $\mu_2(W''_{n+1}) \geq \frac{1}{(q+1)^2} \mu(Z_{n+1}(a)) > 2\eps$.

If $x \in U_\eps(s) \cap W'_{n+1}$, then
$\varphi_B^{s-h_n(2)}(x) \in \bigcup_{a=1}^{q-1}Y_{n+1}(a)$ and
$$
d(\varphi_B^{s-h_n(2)}(x), x) \leq d(\varphi_B^s(x),x) + d(\varphi_B^s(x), \varphi_B^{s-h_n(2)}(x)) < \eps + \eps = 2\eps.
$$
But then $x' := \varphi_B^{h_n(2)}(x) \in W''_{n+2}$ and
$x'' := \varphi_B^{s-h_n(2)}(x') \varphi_B^s(x)\in W_{n+1}$ 
satisfy
\begin{eqnarray*}
  d(\varphi_B^s(x'), x') &\geq& d(\varphi_B^s(x'),\varphi_B^{s-h_n(2)}(x')) -   d(\varphi_B^{s-h_n(2)}(x'),x') \\
  &=& d(\varphi_B^{h_n(2)}(x''),x'') -   d(\varphi_B^{s-h_n(2)}(x),x) > 4\eps - 2\eps = 2\eps.
\end{eqnarray*}
Hence $W''_{n+1} \cap U_\eps(s) = \emptyset$, which contradicts that $\mu_2(U_\eps(s)) > 1-\eps$.
\\[3mm]
Now $s<2h_n(2)$ so that $h_{n+1}(2) - s \geq (q-1)h_n(2)$.
For $n \geq 1$ and $0 \leq a < p+q$, set 
\begin{align*}
Z_{n+2}'(a) := \{ x \in X_B : t(x_{n+2}) = v_{n+2}(2), \ x_{n_0+1} = q-1, x_{n_1+1} \in \{  1,2\},\\
 x_{n+1}\in \{1,\dots,q-2\}, x_{n+2} = q, x_{n+3} \in \{1,\dots,q-1\}\}
\end{align*}
{\bf Claim 2:} Set $Q=\{ x \in Z'_{n+2}(a)  : d(\varphi_B^{h_{n+1}(2)}(x),x) > 4\eps\}$.
There is $\gamma>(q+1)^{-4}$ such that 
$$
\mu_2(Q) > \frac12\, \mu_2(Z_{n+2}'(a))>\gamma
\qquad \text{ for every } n>n_1.
$$
The proof of Claim 2 is similar to that of Claim 1. The details are left to the reader.

Let $R=\{y\in X_B : y_j=x_j \text{ for }j\neq n+2, y_{n+2}=1 \text{ for some } x\in Q\}$.
Note that since $y_{n+2}=1$ for every $y\in R$
we have $d(\varphi_B^{h_{n+1}(2)}(y),y)<\eps$.
Now fix any $x\in Q$ and associated $y\in R$ which has all coordinates, but $(n+2)$-th the same as $x$.
Then, since $x_{n+1}<q-1$, points $\varphi_B^s(x)$ and $\varphi_B^s(y)$ satisfy $\varphi_B^s(x)_j=\varphi_B^s(y)_j$ for $j\leq n$. Note also that 
$\varphi_B^{h_{n+1}(2)}(y)_j=y_j$ for all $j\leq n_0$
and $\varphi_B^{h_{n+1}(2)}(x)_j\neq x_j$ for some $j\leq n_0$. It means that there is $j\leq n_0$ such that
$\varphi_B^{h_{n+1}(2)}(y)_j\neq \varphi_B^s(y)_j$ or $\varphi_B^{h_{n+1}(2)}(x)_j\neq \varphi_B^s(x)_j$. This in turn means that
$$
d(\varphi_B^s(x),\varphi_B^{h_{n+1}(2)-s}(\varphi_B^s(x)))\geq \eps \text{ or }
d(\varphi_B^s(y),\varphi_B^{h_{n+1}(2)-s}(\varphi_B^s(y)))\geq \eps.
$$
As a consequence $\mu_2(U_\eps(h_{n+1}(2)-s))<1-\gamma$
which is a contradiction.
\end{proof}

\begin{rem}

It seems that the assumption that the Bratteli diagram in Theorem \ref{thm_stat_nonsimple_BD} is stationary is important for the result to hold. To see this, 
modify the stationary Bratteli diagram from Theorem \ref{thm_stat_nonsimple_BD} to the non-stationary one with the following incidence matrices:
$$
F_n = \begin{pmatrix}
    2 & 0\\
    p_n & q_n
\end{pmatrix},
$$
where $\limsup_{n\to\infty} q_{n} = \infty$ and  
the series $\sum_{n=1}^{\infty} \frac{p_n 2^{n-1}}{\prod_{k=1}^n q_k}$ converges.
Define the order as in Theorem \ref{thm_stat_nonsimple_BD}, then both ergodic invariant measures are rigid (the measure extension from the second odometer is finite by \cite[Theorem 2.1]{AdamskaBezuglyiKarpelKwiatkowski2017}). To prove that the measure sitting of the second vertex is rigid, use Theorem \ref{Thm:rigid_mu_from_odom}. Since the odometer corresponding to the second vertex is consecutively ordered and has the growing number of edges, the sequence $(h_n(2))$ is a rigidity sequence.
\end{rem}

\subsection{Examples from kneading theory and enumeration systems.}

The non-rigid example in this subsection is minimal but not a homeomorphisms, it is one of a family of so called enumeration systems, which exhibit varying rigidity behaviors. 

The following approach and notation comes from kneading theory, i.e., the symbolic dynamics of unimodal maps, see \cite[Section 3.6.3]{Bruin_book}.
Given a {\em kneading map} $Q: \N \to \N \cup \{ 0 \}$ satisfying $Q(k) < k$ for all $k \in \N$, we build a recursive sequence
\begin{equation}\label{eq:ct}
S_0 = 1, \qquad S_k = S_{k-1} + S_{Q(k)}.
\end{equation}
This is the sequence of {\em cutting times}, and it fully determines the combinatorial properties of the orbit $\orb(c)$ of the critical point $c=0$ of a unimodal map $f_{\ell,a}:[0,1] \to [0,1]$, $x \mapsto a - |x|^\ell$.
Here $\ell$ is the so-called critical order and parameter $a$ can be chosen that $f_{\ell,a}$ has cutting times $(S_k)_{k \geq 0}$.\footnote{Provided
the kneading map satisfies Hofbauer's admissibility condition, see \cite[Formula (3.22)]{Bruin_book}.}
For instance, if $Q(k) \to \infty$, then $\overline{\orb(c)}$ is a minimal Cantor set
\cite[Theorem 4.120]{Bruin_book}, and if $\sup_k k-Q(k) < \infty$,
then $f:\overline{\orb(c)} \to \overline{\orb(c)}$ is uniquely ergodic \cite[Corollary 6.41]{Bruin_book}, and a fortiori,
$\orb(c)$ attract Lebesgue-a.e.\ orbit, see \cite[Theorem A]{Bruin_TAMS}.

One can describe the associated dynamics by means of {\em enumeration scales},
introduced in \cite{BaratDownarowiczIwanikLiardet2000, BaratDownarowiczLiardet2002}, see also \cite[Section 5.3]{Bruin_book}. It allows a more general approach to Cantor system, based on more general recursive sequences, but in this section we stick to the recursion \eqref{eq:ct}.
Every $N\in\N_0$ can be represented as a sequence $(x_i)_{i=0}^\infty$
of digits $x_i \in \{ 0, 1\}$ through the greedy algorithm such that
$$
    \langle N\rangle = x_0x_1\cdots \ \ \text{ for }\ \ N = \sum_{i\geq 0} x_i S_i.
$$
We define the set $X_0 = \langle \N_0 \rangle$ and take its closure $X$ in the product topology on $\N_0^{\N_0}$.

As dynamics the 'addition of one' maps $T: X_0 \mapsto X_0$ by $T(\langle n\rangle) = \langle n+1 \rangle$. This can be continuously extended to the set $X$,
provided that $Q(k) \to \infty$ (which is the assumption that we will pose from now on), and in this case $T: X \to X$ is surjective and minimal.
The system $(X, T)$ is called the {\em enumeration system} of the enumeration scale $(S_k)_{k \geq 0}$.

Such enumeration scales can be represented as Bratteli-Vershik systems as follows:
For $i \geq 1$, the vertex set $V_i$ consists of $1+\# K_i$
vertices for $K_i := \{k : Q(k) < i < k\}$, indexed $i$ and $K_i$,
and $V_0 = \{ v_0\}$.
The hat is simple, i.e., $E_1$ contains a unique edge from $v_0$
to each $v \in V_1$.
For $i \geq 1$, $E_{i+1}$ contains an edges (labeled $0$) from $v_i \in V_i$
to $v_{i+1} \in V_{i+1}$ and from $v_i \in V_i$ to $v_k \in V_{i+1}$ if $k > i+1 > i = Q(k)$. If $Q(i+1) = i$, then there is another edge from $v_i \in V_i$ to $v_{i+1} \in V_{i+1}$, but this edge is labeled $1$.
The remaining edges are from $v_k \in V_i$, $k > i$, to $v_k \in V_{i+1}$ and these are all labeled $0$. The labels indicate the (left-to-right) order of incoming vertices.
The resulting Bratteli diagram has a simple hat and a single {\em spine}
$v_0 \stackrel{0}{\to} v_1  \stackrel{0}{\to} v_2  \stackrel{0}{\to} v_3  \stackrel{0}{\to} \dots$ (this is the unique minimal infinite path), and for each $i \geq 2$,
there is a path from $v_i$ upward to $v_{Q(i)} \in V_{Q(i)}$ of which the lowest edge is labeled $1$ and the others are labeled $0$.

\begin{figure}
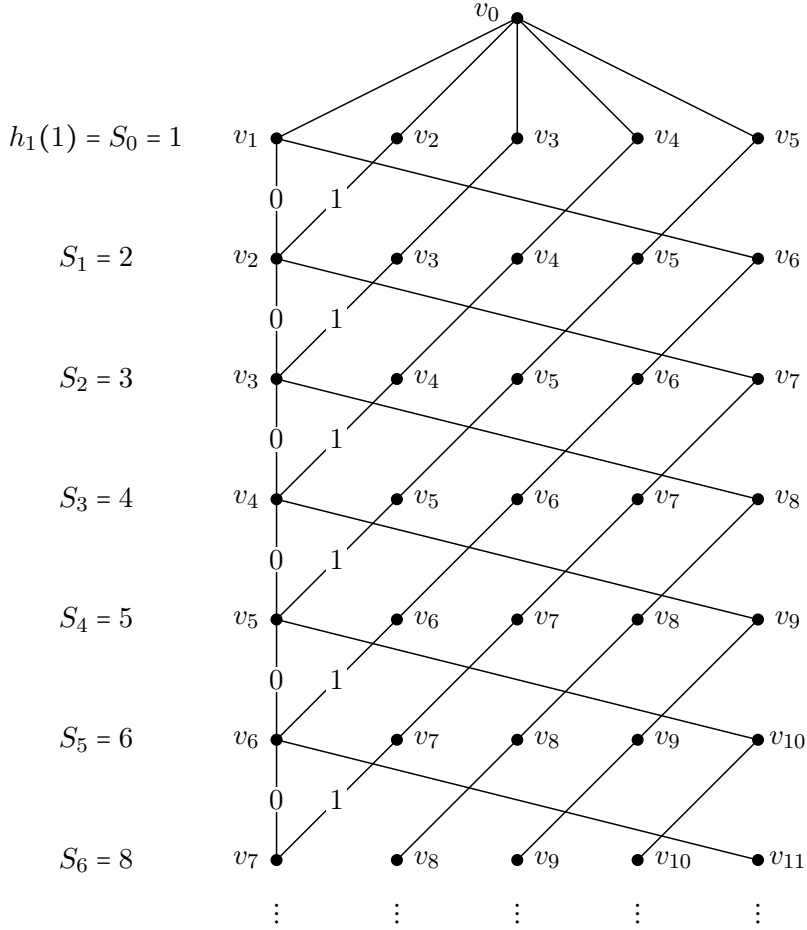

\unitlength=0.8cm
\begin{graph}(10,16)

\roundnode{V0}(5,15)
\nodetext{V0}(-0.5,0.1){$v_0$}

 \roundnode{V10}(1,13) 
 \roundnode{V11}(3,13)
 \roundnode{V12}(5,13)
 \roundnode{V13}(7,13)
  \roundnode{V14}(9,13)
  
 \nodetext{V10}(-0.5,0){$v_1$}
 \nodetext{V11}(0.5,0){$v_2$}
 \nodetext{V12}(0.5,0){$v_3$}
 \nodetext{V13}(0.5,0){$v_4$}
  \nodetext{V14}(0.5,0){$v_5$}

 \roundnode{V20}(1,11) 
 \roundnode{V21}(3,11)
 \roundnode{V22}(5,11)
 \roundnode{V23}(7,11)
  \roundnode{V24}(9,11)

   \nodetext{V20}(-0.5,0){$v_2$}
 \nodetext{V21}(0.5,0){$v_3$}
 \nodetext{V22}(0.5,0){$v_4$}
 \nodetext{V23}(0.5,0){$v_5$}
  \nodetext{V24}(0.5,0){$v_6$}

 \roundnode{V30}(1,9) 
 \roundnode{V31}(3,9)
 \roundnode{V32}(5,9)
 \roundnode{V33}(7,9)
  \roundnode{V34}(9,9)

     \nodetext{V30}(-0.5,0){$v_3$}
 \nodetext{V31}(0.5,0){$v_4$}
 \nodetext{V32}(0.5,0){$v_5$}
 \nodetext{V33}(0.5,0){$v_6$}
  \nodetext{V34}(0.5,0){$v_7$}

 \roundnode{V40}(1,7) 
 \roundnode{V41}(3,7)
 \roundnode{V42}(5,7)
 \roundnode{V43}(7,7)
  \roundnode{V44}(9,7)

\nodetext{V40}(-0.5,0){$v_4$}
 \nodetext{V41}(0.5,0){$v_5$}
 \nodetext{V42}(0.5,0){$v_6$}
 \nodetext{V43}(0.5,0){$v_7$}
  \nodetext{V44}(0.5,0){$v_8$}

 \roundnode{V50}(1,5) 
 \roundnode{V51}(3,5)
 \roundnode{V52}(5,5)
 \roundnode{V53}(7,5)
  \roundnode{V54}(9,5)
  
  \nodetext{V50}(-0.5,0){$v_5$}
 \nodetext{V51}(0.5,0){$v_6$}
 \nodetext{V52}(0.5,0){$v_7$}
 \nodetext{V53}(0.5,0){$v_8$}
  \nodetext{V54}(0.5,0){$v_9$}

 \roundnode{V60}(1,3) 
 \roundnode{V61}(3,3)
 \roundnode{V62}(5,3)
 \roundnode{V63}(7,3)
  \roundnode{V64}(9,3)
    
  \nodetext{V60}(-0.5,0){$v_6$}
 \nodetext{V61}(0.5,0){$v_7$}
 \nodetext{V62}(0.5,0){$v_8$}
 \nodetext{V63}(0.5,0){$v_9$}
  \nodetext{V64}(0.5,0){$v_{10}$}

 \roundnode{V70}(1,1) 
 \roundnode{V71}(3,1)
 \roundnode{V72}(5,1)
 \roundnode{V73}(7,1)
  \roundnode{V74}(9,1)

  \nodetext{V70}(-0.5,0){$v_7$}
 \nodetext{V71}(0.5,0){$v_8$}
 \nodetext{V72}(0.5,0){$v_9$}
 \nodetext{V73}(0.6,0){$v_{10}$}
  \nodetext{V74}(0.5,0){$v_{11}$}  
  
 %
 \graphlinewidth{0.025}
 
  \edge{V0}{V11}
  \edge{V0}{V12}
    \edge{V0}{V13}
      \edge{V0}{V14}
        \edge{V0}{V10}


  \edge{V20}{V10}
  \edgetext{V20}{V10}{0}
  \edge{V20}{V11}
  \edgetext{V20}{V11}{1}

    \edge{V21}{V12}
      \edge{V22}{V13}
        \edge{V23}{V14}
          \edge{V24}{V10}    


  \edge{V30}{V20}
  \edgetext{V30}{V20}{0}
  \edge{V30}{V21}
  \edgetext{V30}{V21}{1}

    \edge{V31}{V22}
      \edge{V32}{V23}
        \edge{V33}{V24}
          \edge{V34}{V20}    
        

  \edge{V40}{V30}
  \edgetext{V40}{V30}{0}
  \edge{V40}{V31}
  \edgetext{V40}{V31}{1}

    \edge{V41}{V32}
      \edge{V42}{V33}
        \edge{V43}{V34}
          \edge{V44}{V30}      


  \edge{V50}{V40}
  \edgetext{V50}{V40}{0}
  \edge{V50}{V41}
  \edgetext{V50}{V41}{1}

    \edge{V51}{V42}
      \edge{V52}{V43}
        \edge{V53}{V44}
          \edge{V54}{V40} 


  \edge{V60}{V50}
  \edgetext{V60}{V50}{0}
  \edge{V60}{V51}
  \edgetext{V60}{V51}{1}

    \edge{V61}{V52}
      \edge{V62}{V53}
        \edge{V63}{V54}
          \edge{V64}{V50}


  \edge{V70}{V60}
  \edgetext{V70}{V60}{0}
  \edge{V70}{V61}
  \edgetext{V70}{V61}{1}

    \edge{V71}{V62}
      \edge{V72}{V63}
        \edge{V73}{V64}
          \edge{V74}{V60} 

\freetext(-2,13){$h_1(1)= S_0 = 1$}
\freetext(-2,11){$S_1 = 2$}
\freetext(-2,9){$S_2 = 3$}
\freetext(-2,7){$S_3 = 4$}
\freetext(-2,5){$S_4 = 5$}
\freetext(-2,3){$S_5 = 6$}
\freetext(-2,1){$S_6 = 8$}
          
\freetext(1,0.1){\vdots}
\freetext(3,0.1){\vdots}
\freetext(5,0.1){\vdots}
\freetext(7,0.1){\vdots}
\freetext(9,0.1){\vdots}
\end{graph}
\caption{The Bratteli diagram for $S_k = S_{k-1} + S_{\max\{ 0, k-5\}}$}\label{fig:BV}
\end{figure}

Figure~\ref{fig:BV} shows the Bratteli diagram for $Q(k) = \max\{ k-5,0\}$.
In this (stationary) case, the cutting times are
$$
(S_k)_{k \geq 0} = 1,2,3,4,5,6,8,11,15,20,26,34,42,53,\dots
$$
and the incidence matrix is
$$
	F = \begin{pmatrix}
		1 & 1 & 0 & 0 & 0 \\
		0 & 0 & 1 & 0 & 0 \\
		0 & 0 & 0 & 1 & 0 \\
		0 & 0 & 0 & 0 & 1 \\
		1 & 0 & 0 & 0 & 0
	\end{pmatrix}.
$$

Each infinite path has a unique labeling
$(x_i)_{i \geq 1}$ where $x_i$ is the label of the $i+1$-st edge of
the path. For example, the spine represents $\langle 0 \rangle = (0,0,0,\dots) = x_{\min}$ and $\langle N \rangle = \tau^N(x_{\min})$ for all $N \in \N$, so the Vershik map $\tau$ takes the role of the addition of one.
In general, the heights are $h_i(v_i \in V_i) = S_{i-1}$ and $h_i(v_k \in V_i) = S_{Q(k)-1}$.

\begin{prop}\label{prop:qpr}
 If $\sup_k k - Q(k) < \infty$, then the corresponding Cantor system is partially rigid.
\end{prop}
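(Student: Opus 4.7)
The plan is to reduce to Theorem~\ref{thm:Danilenko} by showing that the Bratteli–Vershik representation of $(X,T)$ described above is of exact finite rank. Set $M := \sup_k (k-Q(k)) < \infty$. Since $K_n = \{k : Q(k) < n < k\} \subseteq \{n+1,\ldots,n+M-1\}$, we have $|V_n| = 1 + |K_n| \leq M$ at every level, so the diagram has finite rank. Moreover $Q(k) \geq k - M \to \infty$ implies minimality, and by \cite[Corollary~6.41]{Bruin_book} the system is uniquely ergodic; denote its unique invariant probability measure by $\mu$.

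For the exact finite rank property the key structural observation is that each non-main vertex $v_k \in V_n$ (with $Q(k) < n < k$) is reached from level $n-1$ only through the single self-loop edge $v_k \in V_{n-1} \to v_k \in V_n$. By $\mathcal{R}$-invariance of $\mu$, the tower measures $\mu(X_n(v_k))$ therefore remain constant for $n \in (Q(k), k]$. Writing $m_n := \mu(X_n(v_n))$, the partition identity
\[
m_n + \sum_{k \in K_n} \mu(X_n(v_k)) = 1, \qquad |K_n| \leq M-1,
\]
holds at every level and gives recursive control on the tower masses. A uniform primitivity argument for the incidence matrices $F_n$ --- each of size at most $M \times M$, with entries in $\{0,1,2\}$ and the companion-like structure dictated by $Q$ --- then shows that, after telescoping over a number of consecutive levels depending only on $M$, the resulting matrices are strictly positive with ratio of smallest to largest entry bounded below by a constant $c = c(M) > 0$. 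Theorem~\ref{thm:BKMS13ExactFinRank}(1) yields exact finite rank, and Theorem~\ref{thm:Danilenko} delivers the partial rigidity of $(X,T,\mu)$.

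The main technical obstacle is the uniform primitivity step, because the kneading map $Q$ may fluctuate arbitrarily within the constraint $k - Q(k) \leq M$. I would overcome this by exploiting that each branch spine $v_k$ re-merges into the main stream precisely at level $k$, where the full set of outgoing branches from $v_k \in V_k$ becomes available; this prevents any vertex of the telescoped diagram from being systematically starved of incoming paths and forces the required balance between matrix entries. Once this combinatorial fact is verified, the reduction to Theorem~\ref{thm:Danilenko} is automatic.
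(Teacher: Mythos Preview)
Your approach is essentially identical to the paper's: both argue that the Bratteli diagram has exact finite rank (by telescoping a bounded number of levels to obtain strictly positive incidence matrices with uniformly bounded entries) and then invoke Danilenko's Theorem~\ref{thm:Danilenko}. The paper is slightly more concrete, asserting that telescoping over $2d$ levels with $d=\sup_k(k-Q(k))$ already yields strictly positive matrices taken from a finite set, but neither you nor the paper fully spells out the primitivity verification.
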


\begin{proof}
Let $d = \sup_k k-Q(k)$. Then telescoping between $2d$ levels will create
a Bratteli diagram with strictly positive incidence matrices, taken from a finite number of possibilities. Therefore the BV-system has exact finite rank. 
Hence, the result by Danilenko \cite{Danilenko2016} that any exact finite rank system is partially rigid applies.
%
\end{proof}

Now we focus on the special case that $Q(k) = \max\{ k-d,0\}$ for $d=1,2,3,\dots$
If $d=1$, then the enumeration scale is isomorphic to the dyadic odometer (and hence even topologically rigid).
For $d=2$, the enumeration system is isomorphic to both the Fibonacci substitution shift
and the golden mean Sturmian shift, both well-known to be rigid.
For $d=3$ and $d=4$, the enumeration system is isomorphic to a Pisot
 substitution shift with discrete spectrum, and therefore rigid as well.
See \cite{BruinKellerStPierre1997} for the corresponding computations.
In general, we expect Pisot substitution shifts to be rigid, and indeed, if the Pisot substitution conjecture holds, we obtain a discrete spectrum and hence the required isomorphism to its maximal equicontinuous factor, via the Halmos-von Neumann Theorem\footnote{We don't know if a rigidity proof also exists without these tools (and hence without an answer to the
 Pisot substitution conjecture). We also don't know, whether there are any rigid non-Pisot substitution shifts, see Section~\ref{Sect:OP}.}.

 The interesting case comes when $d=5$, and the enumeration system,
 and its characteristic equation $x^5-x^4-1 = (x-e^{\pi i/3})(x-e^{5\pi i/3})(x^3-x-1) = 0$ are no longer Pisot.

\begin{thm}\label{thm:enum_syst_notrigid}
   The enumeration system $(X,T)$ with $Q(k) = \max\{0, k -5 \}$ is not measure-theoretically rigid.
\end{thm}

\begin{proof}
Using techniques from Bratteli diagrams we can compute the measure of cylinder sets.
We let $\xi_1, \dots, \xi_5$ be the measures of the cylinder sets of level 1. The structure of the diagram gives that every vertex on level 1 besides the first has a unique outgoing path until it hits the spine on a later level. Therefore
$\xi_i = \frac{\xi_{i-1}}{\lambda}$,
where $\lambda$ is the leading eigenvalue of the transition matrix $F$, see \cite{BezuglyiKwiatkowskiMedynetsSolomyak2010} for the construction. For cylinder sets $C_{n+1}(i)$ of lengths $n+1$ ending in $i-th$-vertex of $V_{n+1}$ the measure is
$$
	\mu(C_{n+1}(1)) = \frac{\xi_1}{\lambda^n} \quad \text{ and } \quad
			\mu(C_{n+1}(i)) = \frac{(\lambda - 1)\xi_1}{\lambda^{n+i-5}} \text{  for $i\not = 1$. }
$$
Furthermore, the number of paths between two levels is connected to $S_k$. After telescoping eight levels, the incidence matrix is full and every vertex of level $N$ connects to every vertex on level $N+8$. The number of paths between the $i$-th vertex in $V_N$ and first in $V_n$ for $n\geq N+8$ is
$$
	(F^{n-N})_{1,i}  = S_{n-N-i-3}.
$$
A simple proof by induction shows that the sequence $(S_{k})_k$
 satisfies the following recursive relation for sufficiently large $k$
 \begin{equation}\label{eq:Sk}
 S_{k+3} = S_{k+2} + S_{k-2} = \begin{cases}
       S_{k+1} + S_k + 1 & \text{ if } k \equiv 0 \text{ or } 5 \bmod 6,\\
       S_{k+1} + S_k - 1 & \text{ if } k \equiv 2 \text{ or } 3 \bmod 6,\\
       S_{k+1} + S_k  & \text{ if } k \equiv 1 \text{ or } 4 \bmod 6.
       \end{cases}
\end{equation}
Define the cylinder set $A = [100000]_{0,5}$ with the digits $100000$ at $x_0 \cdots x_5$ and its subsets
\begin{align*}
    B_k^0 =&\ [100000]_{0,5} \cap [00000000000]_{k-5, k+5},    \\
    B_k^1 =&\ [100000]_{0,5} \cap [00000010000]_{k-5, k+5},     \\
    B_k^{-1} =& \ [100000]_{0,5} \cap [00001000000 ]_{k-5, k+5}.
\end{align*}
These cylinders indicate paths in the Bratteli diagram that pass through the second edge in $E_2$ with
$t(x_0) = v_2 \in V_2$ for the cylinder $A$), and that additionally pass through the second edge in $E_{k+1\pm1}$ with $t(x_{k\pm1}) = v_{k+1\pm1} \in V_{k+1\pm 1}$ (for the cylinders $B_k^{\pm1}$),
while the paths in $B_k^0$ go through the spine between levels $k-5$ and $k+5$.
Note also that $A$, $T(A)$ and $T^{-1}(A)$ are pairwise disjoint, and $B_k^{\pm 1} = T^{S_{k\pm 1}}(B_k^0)$.
The masses of these sets are
\begin{align*}
    \mu(A) &= \frac{\xi_1}{\lambda^2} \quad \text{ and } \quad
    \mu(B_k^0) = \mu(B_k^1) = \mu(B_k^{-1}) = \mu(C_{k+3}(1))(F^{k-7})_{1,1}=\frac{\xi_1 S_{k-11} }{\lambda^{k+2}}.
\end{align*}
By the Perron-Frobenius theorem, for a primitive matrix $F$ and its leading eigenvalue $\lambda$ we have
$$
\lim_{n \rightarrow \infty}\frac{F^n}{\lambda^n} = \xi\eta,
$$
where $\xi$ and $\eta$ are leading right and left strictly positive eigenvectors of $F$ normalised such that $\eta \xi=1$.
Therefore, the fraction $\frac{S_n}{\lambda^n}$ converges to a positive constant and $\mu(B_k^0) = \mu(B_k^1) = \mu(B_k^{-1}) >c>0$ for all $k \in \N$.

To finish the proof, let $n \geq 1$ be arbitrary and take $k = \max\{ j : S_j \leq n\}$ and $n' = n-S_k$, so $n' < S_{k-4}$.

We claim that for at least one of $T^n(B_k^0) \cap A$,
$T^n(B_k^1) \cap A$ and $T^n(B_k^{-1}) \cap A$ has mass $< c/2$,
so $n$ cannot be a rigidity time. This would finish the proof.

To show the claim, take $B' = B_k^0 \cap T^{-n'}(A)$, and note that $T^{n'+S_j}B' \subset A$ for all $j \geq k+2$. Indeed, since $n' < S_{k-4}$, applying $T^{n'}$ to $x \in B'$
can affect at most the first $k-1$ digits. Applying another $T^{S_j}$ doesn't affect any
digit below $k-5$, so $T^{n'+S_j}(x) \in A$.

If $\mu(B') < c/2$, then $\mu(T^{n'}(B^0_k) \setminus A) \geq c/2$ and each $x \in
T^{n'}(B^0_k) \setminus A$ has $0$s in positions $k-2, \dots, k+5$,
but potentially a $1$ in position $k-3$.
Applying another $S_k$ iterates produces a $1$ in position $k$ or $k+1$ 
(since $S_{k+1} = S_k+S_{k-4} = S_k + S_{k-3} - S_{k-8}$).
That is, $T^n(B^0_k \setminus B')$ is disjoint from $B^0_k$ but has mass $\geq c/2$.
Thus the claim holds for $B_k^0$.

Assume therefore that $\mu(B') \geq c/2$. 
\begin{itemize}
 \item $k \equiv 0 \text{ or } 5 \bmod 6$:
 Note that $T^{S_{k+1}}(B_k^0) = B_k^1$ and
 $T^n(B_k^1) = T^{n'+S_{k+1}+S_k}(B_k^0) \supset
 T^{n'+S_{k+3}-1}(B')$, which is a set of mass $\geq c/2$ contained in $T^{-1}(A)$ and hence disjoint from $A$.
 Therefore $\mu(B_k^1 \cap T^n(B_k^1)) < c/2$.
 \item $k \equiv 2 \text{ or } 3 \bmod 6$:
 Note that $T^{S_{k+1}}(B_k^0) = B_k^1$ and
 $T^n(B_k^1) = T^{n'+S_{k+1}+S_k}(B_k^0) \supset
 T^{n'+S_{k+3}+1}(B')$, which is a set of mass $\geq c/2$ contained in $T(A)$ and hence disjoint from $A$.
 Therefore $\mu(B_k^1 \cap T^n(B_k^1)) < c/2$.
 \item $k \equiv 4 \bmod 6$:
 Now $T^{S_{k-1}}(B_k^0) = B_k^{-1}$ and
 $T^n(B_k^{-1}) = T^{n'+S_{k-1}+S_k}(B_k^0) \supset
 T^{n'+S_{k+2}+1}(B')$, which is a set of mass $\geq c/2$ contained in $T(A)$ and hence disjoint from $A$.
 Therefore $\mu(B_k^{-1} \cap T^n(B_k^{-1})) < c/2$.
 \item $k \equiv 1 \bmod 6$:
 Now $T^{S_{k-1}}(B_k^0) = B_k^{-1}$ and
 $T^n(B_k^{-1}) = T^{n'+S_{k-1}+S_k}(B_k^0) \supset
 T^{n'+S_{k+2}-1}(B')$, which is a set of mass $\geq c/2$ contained in $T^{-1}(A)$ and hence disjoint from $A$.
 Therefore $\mu(B_k^{-1} \cap T^n(B_k^{-1})) < c/2$.
\end{itemize}
This finishes the proof of the claim and of the whole theorem.
\end{proof}

\medskip
\section{Open problems}\label{Sect:OP}

\begin{enumerate}
    \item Are there partially rigid but not measure-theoretically rigid Toeplitz systems? Can one use the construction from Theorem~\ref{thm:skew} with another subshift $(Y, \sigma, \nu)$ to get a partially rigid, but not measure-theoretically rigid Toeplitz system?

    \item Suppose a minimal Cantor system $(X,T,\mu)$ is measure-theoretically rigid (or partially rigid). Does it mean that the first return time map to any clopen set of positive measure is also measure-theoretically rigid (or partially rigid)?

    \item Compute the best partial rigidity constant (in \cite{DonosoMaassRadic2023} this constant is called a ``partial rigidity rate'') for enumeration systems considered in Section~\ref{sec:enum_systems}.
    Determine which enumeration systems 
    defined by linear recursion are measure-theoretically rigid.

    \item If a primitive substitution shift is rigid, does it mean that its leading eigenvalue is Pisot? (Note that this is less restrictive than that the substitution itself is Pisot. For example, constant length substitutions are rigid by Proposition~\ref{prop:length_reg_Toeplitz} and their leading eigenvalues are integer, hence Pisot, even though the incidence matrix may have other eigenvalues outside the unit disk.)


    \item It is known that every Cantor minimal system is strongly orbit equivalent to a Cantor minimal system of entropy zero (see \cite{BoyleHandelman1994}, \cite{Durand2010}).
    Is it true that every Cantor minimal system has in its strong orbit equivalence class a Cantor minimal system which is (partially) measure-theoretically rigid with respect to at least one of its ergodic invariant probability measures (see also Remark \ref{Rem:SOEtoNonRigid})? In particular, is it true that any simple Bratteli diagram can be telescoped to a diagram admitting an order such that the corresponding Vershik map is rigid with respect to at least one of its ergodic invariant probability measures?

\end{enumerate}


\medskip
\textbf{Acknowledgements.}
HB, OK and SR are supported by the Polish National Agency for Academic Exchange under the contract no. PPN/BAT/2021/1/00024/U/00001.
OK is supported by the program ``Excellence Initiative - Research University'' at the AGH University of Krak\'ow. PO  is supported by National Science Centre, Poland (NCN), grant no. 2019/35/B/ST1/02239.
HB is supported by the ANR-FWF Project I 6750-N. 
This research was finalized during the visit of HB and SR to the Jagiellonian
University funded
 by the program ``Excellence Initiative - Research University'' at the
Jagiellonian University in Krak\'ow.

\bibliography{rigidity-bib}

\begin{thebibliography}{BdJLR14}

\bibitem[ABKK17]{AdamskaBezuglyiKarpelKwiatkowski2017}
M.~Adamska, S.~Bezuglyi, O.~Karpel, and J.~Kwiatkowski.
\newblock Subdiagrams and invariant measures on {B}ratteli diagrams.
\newblock {\em Ergodic Theory Dynam. Systems}, 37(8):2417--2452, 2017.

\bibitem[ADE24]{ArbuluDurandEspinoza2023}
F.~Arbul\'u, F.~Durand, and B.~Espinoza.
\newblock The {J}acobs--{K}eane theorem from the $\mathcal{S}$-adic viewpoint.
\newblock {\em Discrete Contin. Dyn. Syst.}, 44(10):3077--3108., 2024.

\bibitem[AK70]{AK70}
D.~V. Anosov and A.~B. Katok.
\newblock New examples in smooth ergodic theory. {E}rgodic diffeomorphisms.
\newblock {\em Trudy Moskov. Mat. Ob\v{s}\v{c}.}, 23:3--36, 1970.

\bibitem[BCO23]{BCO}
J.~P. Boron\'ski, A.~Clark, and P.~Oprocha.
\newblock Correction to: {N}ew exotic minimal sets from pseudo-suspensions of
  {C}antor systems.
\newblock {\em J. Dynam. Differential Equations}, 35(2):1203, 2023.

\bibitem[BDIL00]{BaratDownarowiczIwanikLiardet2000}
G.~Barat, T.~Downarowicz, A.~Iwanik, and P.~Liardet.
\newblock Propri\'et\'es topologiques et combinatoires des \'echelles de
  num\'eration.
\newblock {\em Colloquium mathematicum}, 84(2):285--306, 2000.

\bibitem[BdJLR14]{Bergelson-delJunco-Lemanczyk-Rosenblatt-2014}
V.~Bergelson, A.~del Junco, M.~Lema\'{n}czyk, and J.~Rosenblatt.
\newblock Rigidity and non-recurrence along sequences.
\newblock {\em Ergodic Theory Dynam. Systems}, 34(5):1464--1502, 2014.

\bibitem[BDL02]{BaratDownarowiczLiardet2002}
G.~Barat, T.~Downarowicz, and P.~Liardet.
\newblock Dynamiques asoci\'ees \`a une \'echelle de num\'eration.
\newblock {\em Acta Arithmetica}, 103(1):41--78, 2002.

\bibitem[BH94]{BoyleHandelman1994}
M.~Boyle and D.~Handelman.
\newblock Entropy versus orbit equivalence for minimal homeomorphisms.
\newblock {\em Pacific J. Math.}, 164(1):1--13, 1994.

\bibitem[BK16]{BezuglyiKarpel2016}
S.~Bezuglyi and O.~Karpel.
\newblock Bratteli diagrams: structure, measures, dynamics.
\newblock In {\em Dynamics and numbers}, volume 669 of {\em Contemp. Math.},
  pages 1--36. Amer. Math. Soc., Providence, RI, 2016.

\bibitem[BK20]{BezuglyiKarpel_2020}
S.~Bezuglyi and O.~Karpel.
\newblock Invariant measures for {C}antor dynamical systems.
\newblock In {\em Dynamics: topology and numbers}, volume 744 of {\em Contemp.
  Math.}, pages 259--295. Amer. Math. Soc., [Providence], RI, [2020] \copyright
  2020.

\bibitem[BKK15]{BezuglyiKarpelKwiatkowski2015}
S.~Bezuglyi, O.~Karpel, and J.~Kwiatkowski.
\newblock Subdiagrams of {B}ratteli diagrams supporting finite invariant
  measures.
\newblock {\em J. Math. Phys. Anal. Geom.}, 11(1):3--17, 100,103, 2015.

\bibitem[BKK19]{BezuglyiKarpelKwiatkowski2019}
S.~Bezuglyi, O.~Karpel, and J.~Kwiatkowski.
\newblock Exact number of ergodic invariant measures for {B}ratteli diagrams.
\newblock {\em J. Math. Anal. Appl.}, 480(2):123431, 49, 2019.

\bibitem[BKMS10]{BezuglyiKwiatkowskiMedynetsSolomyak2010}
S.~Bezuglyi, J.~Kwiatkowski, K.~Medynets, and B.~Solomyak.
\newblock Invariant measures on stationary {B}ratteli diagrams.
\newblock {\em Ergodic Theory Dynam. Systems}, 30(4):973--1007, 2010.

\bibitem[BKMS13]{BezuglyiKwiatkowskiMedynetsSolomyak2013}
S.~Bezuglyi, J.~Kwiatkowski, K.~Medynets, and B.~Solomyak.
\newblock Finite rank {Bratteli} diagrams: structure of invariant measures.
\newblock {\em Trans. Am. Math. Soc.}, 365(5):2637--2679, 2013.

\bibitem[BKSP97]{BruinKellerStPierre1997}
H.~Bruin, G.~Keller, and M.~St.~Pierre.
\newblock Adding machines and wild attractors.
\newblock {\em Ergodic theory and dynamical systems}, 17(6):1267--1287, 1997.

\bibitem[Bru98]{Bruin_TAMS}
H.~Bruin.
\newblock Topological conditions for the existence of {C}antor attractors.
\newblock {\em Trans. Am. Math. Soc.}, 350:2229--2263, 1998.

\bibitem[Bru22]{Bruin_book}
H.~Bruin.
\newblock {\em Topological and ergodic theory of symbolic dynamics}, volume 228
  of {\em Graduate Studies in Mathematics}.
\newblock American Mathematical Society, November 2022.

\bibitem[Dan16]{Danilenko2016}
A.~I. Danilenko.
\newblock Actions of finite rank: weak rational ergodicity and partial
  rigidity.
\newblock {\em Ergodic Theory Dynam. Systems}, 36(7):2138--2171, 2016.

\bibitem[DGS76]{Denker}
M.~Denker, C.~Grillenberger, and K.~Sigmund.
\newblock {\em Ergodic theory on compact spaces}.
\newblock Lecture Notes in Mathematics, Vol. 527. Springer-Verlag, Berlin-New
  York, 1976.

\bibitem[DK18]{DownarowiczKarpel2018}
T.~Downarowicz and O.~Karpel.
\newblock Dynamics in dimension zero: a survey.
\newblock {\em Discrete Contin. Dyn. Syst.}, 38(3):1033--1062, 2018.

\bibitem[DL12]{DurandLeroy2012}
F.~Durand and J.~Leroy.
\newblock {$S$}-adic conjecture and {B}ratteli diagrams.
\newblock {\em C. R. Math. Acad. Sci. Paris}, 350(21-22):979--983, 2012.

\bibitem[DM08]{DownarowiczMaass2008}
T.~Downarowicz and A.~Maass.
\newblock Finite-rank {B}ratteli-{V}ershik diagrams are expansive.
\newblock {\em Ergodic Theory Dynam. Systems}, 28(3):739--747, 2008.

\bibitem[DMR23]{DonosoMaassRadic2023}
S.~Donoso, A.~Maass, and T.~Radi\'c.
\newblock On partial rigidity of {$S$}-adic subshifts.
\newblock {\em preprint}, 2023.

\bibitem[Dow05]{Downar}
T.~Downarowicz.
\newblock Survey of odometers and {T}oeplitz flows.
\newblock In {\em Algebraic and topological dynamics}, volume 385 of {\em
  Contemp. Math.}, pages 7--37. Amer. Math. Soc., Providence, RI, 2005.

\bibitem[DP22]{DurandPerrin2022}
F.~Durand and D.~Perrin.
\newblock {\em Dimension groups and dynamical systems---substitutions,
  {B}ratteli diagrams and {C}antor systems}, volume 196 of {\em Cambridge
  Studies in Advanced Mathematics}.
\newblock Cambridge University Press, Cambridge, 2022.

\bibitem[DS17]{DonosoShao2017}
S.~Donoso and S.~Shao.
\newblock Uniformly rigid models for rigid actions.
\newblock {\em Studia Math.}, 236(1):13--31, 2017.

\bibitem[Dur10]{Durand2010}
F.~Durand.
\newblock Combinatorics on {B}ratteli diagrams and dynamical systems.
\newblock In {\em Combinatorics, automata and number theory}, volume 135 of
  {\em Encyclopedia Math. Appl.}, pages 324--372. Cambridge Univ. Press,
  Cambridge, 2010.

\bibitem[FFT09]{FerencziFisherTalet2009}
S.~Ferenczi, A.~M. Fisher, and M.~Talet.
\newblock Minimality and unique ergodicity for adic transformations.
\newblock {\em J. Anal. Math.}, 109:1--31, 2009.

\bibitem[FH77]{FH77}
A.~Fathi and M.~R. Herman.
\newblock Existence de diff\'{e}omorphismes minimaux.
\newblock In {\em Dynamical systems, {V}ol. {I}---{W}arsaw}, Ast\'{e}risque,
  No. 49, pages 37--59. Soc. Math. France, Paris, 1977.

\bibitem[FH19]{Fh19}
S.~Ferenczi and P.~Hubert.
\newblock Rigidity of square-tiled interval exchange transformations.
\newblock {\em Journal of Modern Dynamics}, 14:153--177, 2019.

\bibitem[FK15]{FayadKanigowski2015}
B.~Fayad and A.~Kanigowski.
\newblock Rigidity times for a weakly mixing dynamical system which are not
  rigidity times for any irrational rotation.
\newblock {\em Ergodic Theory Dynam. Systems}, 35(8):2529--2534, 2015.

\bibitem[Fri89]{Friedman1989}
N.~A. Friedman.
\newblock Partial mixing, partial rigidity, and factors.
\newblock In {\em Measure and measurable dynamics ({R}ochester, {NY}, 1987)},
  volume~94 of {\em Contemp. Math.}, pages 141--145. Amer. Math. Soc.,
  Providence, RI, 1989.

\bibitem[FW78]{FurstenbergWeiss1978}
H.~Furstenberg and B.~Weiss.
\newblock The finite multipliers of infinite ergodic transformations.
\newblock In {\em The structure of attractors in dynamical systems ({P}roc.
  {C}onf., {N}orth {D}akota {S}tate {U}niv., {F}argo, {N}.{D}., 1977)}, volume
  668 of {\em Lecture Notes in Math.}, pages 127--132. Springer, Berlin-New
  York, 1978.

\bibitem[GJ00]{GjerdeJohansen2000}
R.~Gjerde and {\O}.~Johansen.
\newblock Bratteli-{V}ershik models for {C}antor minimal systems: applications
  to {T}oeplitz flows.
\newblock {\em Ergodic Theory Dynam. Systems}, 20(6):1687--1710, 2000.

\bibitem[GM89]{GM}
S.~Glasner and D.~Maon.
\newblock Rigidity in topological dynamics.
\newblock {\em Ergodic Theory Dynam. Systems}, 9(2):309--320, 1989.

\bibitem[GW79]{GW}
S.~Glasner and B.~Weiss.
\newblock On the construction of minimal skew products.
\newblock {\em Israel J. Math.}, 34(4):321--336 (1980), 1979.

\bibitem[Han82]{Handel}
M.~Handel.
\newblock A pathological area preserving {$C\sp{\infty }$} diffeomorphism of
  the plane.
\newblock {\em Proc. Amer. Math. Soc.}, 86(1):163--168, 1982.

\bibitem[HPS92]{HermanPutnamSkau1992}
Richard~H. Herman, Ian~F. Putnam, and Christian~F. Skau.
\newblock Ordered {Bratteli} diagrams, dimension groups and topological
  dynamics.
\newblock {\em Int. J. Math.}, 3(6):827--864, 1992.

\bibitem[HVN42]{HvN42}
P.~Halmos and J.~Von~Neuman.
\newblock Operator methods in classical mechanics. ii.
\newblock {\em Annals of Mathematics}, 43:332--350, 1942.

\bibitem[JK69]{JacobsKeane1969}
K.~Jacobs and M.~Keane.
\newblock {$0-1$}-sequences of {T}oeplitz type.
\newblock {\em Z. Wahrscheinlichkeitstheorie und Verw. Gebiete}, 13:123--131,
  1969.

\bibitem[KK09]{KK09}
A.~Kocsard and A.~Koropecki.
\newblock A mixing-like property and inexistence of invariant foliations for
  minimal diffeomorphisms of the 2-torus.
\newblock {\em Proc. Amer. Math. Soc.}, 137(10):3379--3386, 2009.

\bibitem[KL23]{KanigowskiLemanczyk2023}
A.~Kanigowski and M.~Lema\'{n}czyk.
\newblock Spectral theory of dynamical systems.
\newblock In {\em Ergodic theory}, Encycl. Complex. Syst. Sci., pages 109--148.
  Springer, New York, [2023] \copyright 2023.

\bibitem[K{\r{u}}r03]{Kurka2003}
P.~K{\r{u}}rka.
\newblock {\em Topological and symbolic dynamics}, volume~11 of {\em Cours
  Sp\'{e}cialis\'{e}s [Specialized Courses]}.
\newblock Soci\'{e}t\'{e} Math\'{e}matique de France, Paris, 2003.

\bibitem[Mar75]{Markley1975}
N.~G. Markley.
\newblock Substitution-like minimal sets.
\newblock {\em Israel J. Math.}, 22(3-4):332--353, 1975.

\bibitem[SB24]{BS24}
S.~Saussol and A.~Boos.
\newblock Keplerin shear with {R}ajchman property.
\newblock {\em Preprint}, arXiv:2309.10437:1--42, 2024.

\bibitem[Sil08]{Silva_book}
C.~E. Silva.
\newblock {\em Invitation to ergodic theory}, volume~42 of {\em Student
  Mathematical Library}.
\newblock American Mathematical Society, Providence, RI, 2008.

\bibitem[Sug07]{Sugisaki2007}
F.~Sugisaki.
\newblock On the subshift within a strong orbit equivalence class for minimal
  homeomorphisms.
\newblock {\em Ergodic Theory Dynam. Systems}, 27(3):971--990, 2007.

\bibitem[Wal82]{Walters}
P.~Walters.
\newblock {\em An introduction to ergodic theory}, volume~79 of {\em Graduate
  Texts in Mathematics}.
\newblock Springer-Verlag, New York-Berlin, 1982.

\bibitem[Wil84]{Williams84}
S.~Williams.
\newblock Toeplitz minimal flows which are not uniquely ergodic.
\newblock {\em Zeitschrift f\"ur Wahrscheinlichkeitstheorie und Verwandte
  Gebiete}, 67(1):95--107, 1984.

\end{thebibliography}
\bibliographystyle{alpha}

\end{document}